\definecolor{linkblue}{named}{MidnightBlue}
\newcommand{\R}{\mathbb{R}}
\newcommand{\pref}[1]{(P\ref{#1})}
\newtheorem{thm}{Theorem}
\newtheorem{obs}{Observation}
\newtheorem{lem}{Lemma}
\crefname{lem}{Lemma}{Lemmata}
\newtheorem{cor}{Corollary}
\newtheorem{clm}{Claim}
\newenvironment{clmproof}{\noindent\emph{Proof of Claim:}}{\hfill{$\blacksquare$}\par}
\crefname{dm}{}{}
\crefname{dp}{}{}
\crefname{p}{}{}
\crefname{bg}{}{}
\newcommand{\defin}[1]{\emph{\textcolor{Maroon}{#1}}}
\theoremstyle{definition}
\title{Connected Dominating Sets in Triangulations}
\author{%
  Prosenjit Bose\thanks{School of Computer Science, Carleton University. Research partially funded by NSERC} \and
  Vida Dujmović\thanks{Department of Computer Science and Electrical Engineering, University of Ottawa. Research partially funded by NSERC}\and
  Hussein Houdrouge\footnotemark[1] \and
  Pat Morin\footnotemark[1] \and
  Saeed Odak\footnotemark[2]}
\date{}
\begin{document}

\maketitle

\begin{abstract}
  We show that every $n$-vertex triangulation has a connected dominating set of size at most $10n/21$.  Equivalently, every $n$ vertex triangulation has a spanning tree with at least $11n/21$ leaves. Prior to the current work, the best known bounds were $n/2$, which follows from work of Albertson, Berman, Hutchinson, and Thomassen (J. Graph Theory \textbf{14}(2):247--258). One immediate consequence of this result is an improved bound for the SEFENOMAP graph drawing problem of Angelini, Evans, Frati, and Gudmundsson (J. Graph Theory \textbf{82}(1):45--64).  As a second application, we show that for every set $P$ of $\lceil 11n/21\rceil$ points in $\R^2$ every $n$-vertex planar graph has a one-bend non-crossing drawing in which some set of  $11n/21$ vertices is drawn on the points of $P$.  The main result extends to $n$-vertex triangulations of genus-$g$ surfaces, and implies that these have connected dominating sets of size at most $10n/21+O(\sqrt{gn})$.
\end{abstract}

\section{Introduction}

A set $X$ of vertices in a graph $G$ is a \defin{dominating set} of $G$ if each vertex of $G$ is in $X$ or adjacent to a vertex in $X$.  A dominating set $X$ of $G$ is \defin{connected} if the subgraph $G[X]$ of $G$ induced by the vertices in $X$ is connected.  There is an enormous body of literature on dominating sets. Several books are devoted to the topic \cite{haynes.hedetniemi.ea:domination,haynes.hedetniemi.ea:topics,du.wan:connected,haynes.hedetniemi.ea:vol2}, including a book and book chapter devoted to connected dominating sets \cite{du.wan:connected,chellali.favaron:connected}.  A typical result in the area is an upper bound of the form: ``Every $n$-vertex graph in some family $\mathcal{G}$ of graphs has a (connected) dominating set of size at most $f(n)$.'' or a lower bound of the form ``For infinitely many $n$, there exists an $n$-vertex member of $\mathcal{G}$ with no (connected) dominating set of size less than $g(n)$.''

\subsection{Connected Dominating Sets in Triangulations}

A \defin{triangulation} is an edge-maximal planar graph.  \citet{matheson.tarjan:dominating} proved that every $n$-vertex triangulation has a dominating set of size at most $n/3=0.33\overline{3}n$ and that there exists $n$-vertex triangulations with no dominating set of size less than $n/4=0.25n$. The gap between these upper and lower bounds stood for over $20$ years until a recent breakthrough by \citet{spacapan:domination} reduced the upper bound to $17n/53\approx 0.32075471698n$.  This was swiftly followed by an improvement to $2n/7= 0.2\overline{857142}n$ by \citet{christiansen.rotenberg.ea:triangulations}.

In the current paper, we consider connected dominating sets in triangulations.  An easy consequence of the proof used by \citet{matheson.tarjan:dominating} is that $n$-vertex triangulations have connected dominating sets of size at most $2n/3=0.66\overline{6}n$.  A more general result, due to  \citet{kleitman.west:spanning} shows that graphs of minimum-degree $3$ in which each edge is included in a $3$-cycle have connected dominating sets of size at most $2(n-5)/3<0.66\overline{6}n$. \citet{albertson.berman.ea:graphs} prove that every triangulation has a spanning tree $T$ with no vertices of degree $2$, which implies that the number of leaves of $T$ exceeds the number of non-leaves by at least $2$.  In particular, the number of leaves is greater than $(n+2)/2$. Thus, the set $X$ of non-leaf vertices of $T$ is a connected dominating set of size at most $(n-2)/2<0.5n$. In order to resolve a graph drawing problem (discussed further in \cref{related_work}), \citet{angelini.evans.ea:sefe} gave another proof of this $0.5n$ bound by showing that every plane graph contains an induced outerplane graph of size at least $0.5n$.  It is not hard to see that if $G$ is a triangulation with $n\ge 4$ vertices and $G[L]$ is outerplane, then $X:=V(G)\setminus L$ is a connected dominating set of $G$. Motivated by the fact that this $0.5n$ bound has stood for over 30 years, \citet{noguchi.zamfirescu:spanning} ask if this $0.5n$ bound can be improved, even in the special case of $4$-connected triangulations. We prove:

\begin{thm}\label{main_result2}
  For every $n\ge 3$, every $n$-vertex triangulation $G$ has a connected dominating set $X$ of size at most $10n/21= 0.\overline{476190}n$.  Furthermore, there exists an $O(n)$ time algorithm for finding $X$.
\end{thm}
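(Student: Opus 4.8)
The plan is to work with the spanning-tree reformulation: a connected dominating set of size $c$ is exactly the non-leaf set of a spanning tree with $n-c$ leaves, so it suffices to produce a spanning tree of $G$ with at least $11n/21$ leaves. Equivalently, by the outerplane characterization recalled in the introduction, it suffices to find a vertex set $L$ with $|L|\ge 11n/21$ such that $G[L]$ is outerplane; then $X:=V(G)\setminus L$ is a connected dominating set of the required size. I would first fix a plane embedding of $G$, so that faces and their incident corners ($\corners$) are available as the atoms of the analysis.

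The core of the argument I envision is a packing--covering (min--max) scheme. I would define a family of local configurations built from faces and corners and show that any candidate leaf set must avoid them, so that the complementary dominating set must $\hit$ every configuration. Dually, a large $\pack$ of vertex-disjoint ``productive'' local structures --- each one certifying several leaves at the cost of few internal vertices --- yields many leaves directly. The two sides are tied together by a global count from Euler's formula: an $n$-vertex triangulation has $2n-4$ faces, $3n-6$ edges, and $6n-12$ corners, and the objective is to apportion this budget so that leaves outnumber internal vertices in the ratio $11:10$.

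To make this quantitative and algorithmic, I would build the tree incrementally in $O(n)$ time, repeatedly expanding a leaf that has several neighbors outside the current subtree; because $G$ is a triangulation (minimum degree $\ge 3$, every edge in a triangle), such expansions stay productive, each converting one leaf into an internal vertex while creating several new leaves. The ratio $11/21$ is then pinned down by a discharging analysis: assign charges to vertices, faces, and corners, redistribute them according to rules keyed to the local terminal configurations, and verify that in every case the leaves pay for the internal vertices at rate at least $11/10$. Designing discharging rules that are simultaneously tight against the worst local configurations (those approaching ten internal vertices per twenty-one) and globally consistent is, I expect, the main obstacle, and the extremal value $10/21$ should correspond to a single unavoidable configuration.

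Finally, the extension to genus-$g$ surfaces should require almost no change to the local analysis, since the only global ingredient is Euler's formula. On a surface of genus $g$ the face, edge, and corner counts shift by $O(g)$, and excising a short cutgraph or invoking a surface separator accounts for the additive $O(\sqrt{gn})$ term, after which the planar discharging argument applies essentially verbatim to the remaining planar pieces.
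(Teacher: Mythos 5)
Your proposal is a strategy outline rather than a proof: every quantitatively load-bearing step is asserted but not carried out. The family of local configurations, the discharging rules, and the verification that ``leaves pay for internal vertices at rate $11/10$'' are exactly where the entire difficulty of the theorem lives, and none of them are specified. In particular, the claim that an incremental leaf-expansion ``stays productive, each converting one leaf into an internal vertex while creating several new leaves'' is false as a local statement. In the nested-triangles example from the introduction (and more generally whenever every boundary vertex of the remaining graph has inner degree at most $2$ and the next layer in is itself degenerate), a greedy expansion can be forced to gain only two new vertices per internal vertex added, and no local charging scheme keyed to faces and corners can extract an $11{:}10$ ratio from such steps alone. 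The paper's proof exists precisely to handle this: it peels the triangulation from the outer face inward, takes a boundary vertex of inner degree at least $3$ when one exists, otherwise pairs an inner-degree-$2$ vertex with a follow-up vertex of inner degree at least $3$ (the ``$2$--$3$ combo'' of \cref{really_good}), and when even that fails it declares the remaining graph $2$-critical and finishes in a single global step by partitioning the inner part into three dominating sets via a $3$-colouring argument (\cref{three_sets_coverage}) and taking the smallest. The constant $10/21$ then emerges from a linear program balancing the greedy phase against this endgame (\cref{better_greedy}), not from a single unavoidable local configuration as you predict; so even the shape of your intended extremal analysis does not match what actually happens.

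Two further concrete issues. First, your packing--covering framing needs a dual pair of certificates (configurations every leaf set must avoid, versus disjoint structures certifying many leaves), but you never exhibit either family, and the known lower bound of $n/3$ shows that whatever configurations you choose must leave a large gap between the two sides --- there is no min--max equality to exploit here, only a one-sided bound. Second, the $O(n)$ algorithm cannot be read off from a discharging argument; the paper has to do real data-structural work (bucketed degree lists, incremental maintenance of a dom-minimal reduced graph, block-cut trees for the $2$-critical endgame) to achieve linear time. Your genus extension, by contrast, is essentially the paper's: slice along an $O(\sqrt{gn})$-vertex planarizing subgraph and apply the planar result, absorbing the cut into the dominating set. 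That part of the plan is sound, but it rests entirely on the planar case you have not established.
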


The best known lower bound for this problem is $n/3 = 0.33\overline{3} n$, obtained from a triangulation that contains $n/3$ vertex-disjoint pairwise-nested triangles $\Delta_1,\ldots,\Delta_{n/3}$.  In order to dominate $\Delta_1\cup \Delta_2$, any dominating set must contain at least two vertices in $\Delta_1 \cup \Delta_2$. In order to dominate $\Delta_{n/3-1}\cup \Delta_{n/3}$, any dominating set must contain two vertices in $\Delta_{n/3-1}\cup\Delta_{n/3}$.  Then, in order to be connected, any connected dominating set must contain a vertex in each of $\Delta_3,\ldots,\Delta_{n/3-2}$.

Connected dominating sets are closely related to spanning trees with many leaves.  If $X$ is a connected dominating set of a graph $G$, then $G$ has a spanning tree in which the vertices of $G-X$ are all leaves.  To see this, start with a spanning tree $T$ of $G[X]$ and then, for each  $v\in V(G)\setminus X$ choose some $x\in N_G(v)\cap X$ and add the edge $xv$ to $T$. Conversely, if $T$ is a spanning tree of $G$ with leaf set $L$, then $X:=V(T-L)$ is a connected dominating set of $G$.  Thus, \cref{main_result2} has the following equivalent statement:

\begin{cor}\label{main_theorem_cor}
  For every $n\ge 3$, every $n$-vertex triangulation $G$ has a spanning tree $T$ with at least $11n/21= 0.\overline{523809}n$ leaves. Furthermore, there exists an $O(n)$ time algorithm for finding $T$.
\end{cor}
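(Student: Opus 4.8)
The plan is to derive this corollary directly from \cref{main_result2} by invoking the correspondence between connected dominating sets and spanning trees with many leaves sketched in the paragraph preceding the statement. First I would apply \cref{main_result2} to obtain a connected dominating set $X$ of $G$ with $|X|\le 10n/21$, computed in $O(n)$ time.

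Next I would build the spanning tree using the explicit construction recalled above: begin with any spanning tree $T_0$ of the induced subgraph $G[X]$, which is connected precisely because $X$ is a \emph{connected} dominating set, and then, for each vertex $v\in V(G)\setminus X$, select a neighbour $x\in N_G(v)\cap X$ (which exists since $X$ is dominating) and add the edge $vx$. The resulting graph $T$ is a spanning tree of $G$: it is connected and spanning because $T_0$ already spans $X$ and every remaining vertex is joined to it, and it is acyclic because each added edge attaches a previously absent vertex $v$ as a new leaf. The leaf count is then immediate, since every vertex of $V(G)\setminus X$ has degree exactly $1$ in $T$ and is therefore a leaf; hence $T$ has at least $|V(G)\setminus X|=n-|X|\ge n-10n/21=11n/21$ leaves.

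For the running time, \cref{main_result2} produces $X$ in $O(n)$ time, computing $T_0$ amounts to a single traversal of $G[X]$, and attaching the $O(n)$ remaining vertices costs $O(n)$ in total, so the entire procedure runs in $O(n)$ time. There is essentially no obstacle here; all of the difficulty resides in \cref{main_result2}, which we assume. The only point warranting a sentence of care is confirming that each vertex outside $X$ genuinely becomes a leaf rather than being absorbed into a longer path of $T$, which the one-edge-per-vertex construction guarantees.
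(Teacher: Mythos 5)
Your proposal is correct and is essentially identical to the paper's own argument, which derives the corollary from \cref{main_result2} via exactly the construction you describe (spanning tree of $G[X]$ plus one edge per vertex of $V(G)\setminus X$, each becoming a leaf). Nothing is missing.
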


\Cref{main_theorem_cor} makes progress on the maxleaf spanning-tree problem for triangulations, explicitly posed by \citet[Question~4.2]{bradshaw.masarik.ea:robust}.  \Cref{main_theorem_cor} also answers a problem posed by \citet{noguchi.zamfirescu:spanning}, who asked if there exists some $\epsilon >0$ such that, for every sufficiently large $n$, every $n$-vertex $4$-connected triangulation has a spanning tree with at least $(1/2+\epsilon)n$ leaves.  \Cref{main_theorem_cor} gives an affirmative answer to this question (with $\epsilon=1/42$), even without the $4$-connectivity condition.

A \defin{surface triangulation} is a graph $G$ embedded on a surface $\mathcal{S}$ in such a way that each face of the embedding is a topological disc whose boundary is a $3$-cycle in $G$.  The Euler genus of a surface triangulation is the Euler genus of the surface $\mathcal{S}$ on which $G$ is embedded.  Using existing techniques for slicing surface-embedded graphs, we obtain the following generalization of \cref{main_result2,main_theorem_cor}:

\begin{thm}\label{genus_result}
  For every $n\ge 3$, every $n$-vertex Euler genus-$g$ surface triangulation $G$ has a connected dominating set $X$ of size at most $10n/21 +O(\sqrt{gn})= 0.\overline{476190}n + O(\sqrt{gn})$.  Equivalently, $G$ has a spanning tree $T$ with at least $11n/21 -O(\sqrt{gn})=0.\overline{523809}n-O(\sqrt{gn})$ leaves.  Furthermore, there exists an $O(n)$ time algorithm for finding $X$ and $T$.
\end{thm}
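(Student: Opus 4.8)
The plan is to reduce the genus-$g$ case to the planar case of \cref{main_result2} by \emph{slicing} $G$ open along short noncontractible curves until the surface becomes a sphere, and then to transfer a near-optimal planar solution back at an additive cost of $O(\sqrt{gn})$. Using the existing slicing machinery for surface-embedded graphs as a black box, I would first produce a \emph{simple} planar triangulation $G'$ on $n' = n + O(\sqrt{gn})$ vertices, together with a set $S \subseteq V(G')$ of $O(\sqrt{gn})$ ``cut'' vertices and a projection $\pi \colon V(G') \to V(G)$ enjoying three properties: $\pi$ restricts to a bijection between $V(G')\setminus S$ and $V(G)\setminus \pi(S)$; every edge of $G$ lifts to an edge of $G'$; and every vertex of $V(G')\setminus S$ is incident in $G'$ only to genuine (projected) edges of $G$. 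Intuitively, $G'$ arises by cutting the surface along a planarizing system of noncontractible cycles of total length $O(\sqrt{gn})$ --- each cut duplicates its cycle and opens a disc face --- and then triangulating every resulting face with chords alone, so that no new vertices are created and each added chord joins two vertices of $S$.

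Given $G'$, I would apply \cref{main_result2} to obtain a connected dominating set $X'$ of $G'$ with $|X'| \le 10n'/21 = 10n/21 + O(\sqrt{gn})$, and then lift it by setting
\[
  X := \pi(X') \cup \pi(S) ,
\]
so that $|X| \le |X'| + |S| = 10n/21 + O(\sqrt{gn})$. Domination transfers because each $v \in V(G)\setminus \pi(S)$ is the image of a unique $v'\in V(G')\setminus S$, every neighbour of $v'$ in $G'$ is reached through a genuine edge, and a dominator of $v'$ in $X'$ therefore projects to a neighbour of $v$ in $X$; the remaining vertices lie in $\pi(S)\subseteq X$.

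For connectivity, $\pi$ carries $G'[X']$ onto a connected spanning subgraph of $G[\pi(X')]$ except possibly where a path of $G'[X']$ traversed a chord inserted during hole-filling. Each such chord sits in a face bounded by a cut cycle whose vertices all lie in $S$, so including $\pi(S)$ in $X$ short-circuits the affected pieces, using that the cut cycles form a connected subgraph of $G$ contained in $\pi(S)$. This yields a connected dominating set of the required size. The running time is $O(n)$ because the slicing step (finding and cutting along the noncontractible cycles in a bounded-genus embedding) and the planar algorithm of \cref{main_result2} are each linear.

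The main obstacle is the interface between the slicing step and the lift. Two things must be pinned down: that the slicing genuinely outputs a \emph{simple} planar triangulation on only $n+O(\sqrt{gn})$ vertices, filling every opened face with chords (so that \cref{main_result2} applies unchanged and the additive error is not degraded); and the connectivity half of the lift, where one must verify that the chords never obstruct the transfer of domination and can always be rerouted through $\pi(S)$ to keep $G[X]$ connected. Quantitatively, the crux is the $O(\sqrt{gn})$ bound on the planarizing system, resting on the fact that a genus-$g'$ triangulation on $n$ vertices has a noncontractible cycle of length $O(\sqrt{n/g'})$ and that only $O(g)$ cuts reach the sphere; confirming that these lengths sum to $O(\sqrt{gn})$ and that the cuts are computable in linear time is where the surface-specific work concentrates.
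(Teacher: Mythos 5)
Your proposal is correct and follows essentially the same route as the paper: slice along a planarizing subgraph of total size $O(\sqrt{gn})$, triangulate the resulting plane graph, apply \cref{main_result2} to get a connected dominating set of size $10(n+O(\sqrt{gn}))/21$, and lift it back by adding all cut vertices, rerouting connectivity through the cut subgraph. The one ingredient you flag as the remaining crux --- a planarizing subgraph with $O(\sqrt{gn})$ vertices and edges computable in $O(n)$ time --- is exactly what the paper imports as a black box via \cref{Eppstein}.
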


\subsection{One-Bend Free Sets}

The original motivation for this research was a graph drawing problem, which we now describe.  For a planar graph $G$, a set $Y\subseteq V(G)$ is called a \defin{free set} if, for every $|Y|$-point set $P\subseteq\R^2$, there exists a non-crossing drawing in the plane with edges of $G$ drawn as line segments and such the vertices of $Y$ are drawn on the points of $P$.  (For historical reasons, the set $Y$ is also called a \defin{collinear set}.)  It is known that every $n$-vertex planar graph has a free set of size $\Omega(\sqrt{n})$ \cite{bose.dujmovic:polynomial,dujmovic:utility,dujmovic.frati.ea:every}.
For bounded-degree planar graphs, this result can be improved to $|Y|=\Omega(n^{0.8})$ \cite{dujmovic.morin:dual}.  Determining the supremum value of $\alpha$ such that every $n$-vertex planar graph has a collinear set of size $\Omega(n^{\alpha})$ remains a difficult open problem, but it is known that $\alpha \le \log_{23}(22)<0.9859$ \cite{ravsky.verbitsky:collinear}.

We consider a relaxation of this problem in which the edges of $G$ can be drawn as a polygonal path consisting of at most two line segments.  Such a drawing is called a \defin{one-bend} drawing of $G$. A subset $Y$ of $V(G)$ is a \defin{one-bend free set} if, for every $|Y|$-point set $P$,  $G$ has a one-bend drawing in which the vertices of $Y$ are mapped to the points in $P$.  We show that, for any spanning tree $T$ of $G$, the leaves of $T$ are a one-bend free set of $G$.  Combined with \cref{main_theorem_cor}, this gives:

\begin{thm}\label{one_bend_collinear_thm}
  For every $n\ge 3$, every $n$-vertex planar graph has a one-bend free set $L$ of size at least $11n/21=0.\overline{523809}n$.  Furthermore, there exists an $O(n)$ time algorithm for finding $L$.
\end{thm}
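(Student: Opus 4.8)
The plan is to reduce \cref{one_bend_collinear_thm} to \cref{main_theorem_cor} by isolating the geometric content in a single lemma: \emph{for every connected planar graph $G$ and every spanning tree $T$ of $G$, the leaf set $L$ of $T$ is a one-bend free set of $G$}. Granting this, the theorem follows immediately. Given an $n$-vertex planar graph $G$, first add edges to obtain a triangulation $G^+$ on the same vertex set. By \cref{main_theorem_cor}, $G^+$ has a spanning tree $T$ with at least $11n/21$ leaves $L$, found in $O(n)$ time, and the lemma shows $L$ is a one-bend free set of $G^+$. Deleting the added edges restricts any one-bend drawing of $G^+$ to a one-bend drawing of the subgraph $G$ on the same vertex set, so $L$ is a one-bend free set of $G$ as well (and any subset of a one-bend free set is again one-bend free, so reporting all of $L$ is legitimate).

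The structural heart of the lemma is the following observation. Writing $X := V(G)\setminus L$, the set $X$ induces a connected subgraph of $G$, since it contains the connected subtree $T-L$ spanning $X$. Moreover every leaf of $T$ has its unique $T$-neighbour in $X$ (the neighbour of a leaf cannot itself be a leaf once $n\ge 3$), so every leaf is $G$-adjacent to $X$. Fix a planar embedding of $G$ and contract the connected set $X$ to a single vertex $x^*$; this yields a planar graph $H=G/X$ on vertex set $L\cup\{x^*\}$ in which $x^*$ is adjacent to every other vertex. Deleting a universal vertex from a planar graph leaves an outerplanar graph, and $H-x^*=G[L]$; hence the leaf--leaf edges of $G$ form an \emph{outerplanar} graph whose outer cyclic order is exactly the rotation of the leaves around $x^*$.

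Given this, the drawing is built as follows. Place a tiny disk $D$ far below all of the target points $P$, small enough to behave like a single point, and inside $D$ draw $G[X]$ as a straight-line planar drawing realizing its induced embedding. Since $G[L]$ is outerplanar it has book thickness one: there is a spine order of $L$ in which all leaf--leaf edges can be drawn as pairwise non-crossing arcs on a single page. I would choose the bijection $L\to P$ so that this spine order matches the left-to-right order of $P$, draw every leaf--leaf edge as a one-bend polyline in the upper half-plane (nesting the bend heights according to the laminar structure of the page), and draw every leaf--$X$ edge as a near-straight one-bend polyline descending into $D$ in the lower half-plane. Edges in the two half-planes cannot cross; the upper arcs are non-crossing because they realize a one-page embedding; and the lower fan is non-crossing because $P$ is swept in spine order toward the single region $D$. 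Finally I would expand $x^*$ back into the drawing inside $D$, which is consistent because the spine order was chosen to agree with the rotation at $x^*$.

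The routine parts are the reduction, the structural observation, and the downward fan. The main obstacle is the upper half-plane routing: I must verify that the outerplanar (book-thickness-one) family of leaf--leaf edges can always be realized by one-bend arcs over arbitrarily positioned points without crossings, which I would handle by an induction over the laminar nesting of the page, giving each arc a bend high enough to clear the arcs nested beneath it. I also expect to spend care on degenerate inputs (collinear $P$, or point sets whose left-to-right order is not strict) and on making the rotation-preserving expansion of the contracted blob $X$ precise. Every step is constructive and linear-time --- the triangulation, the spanning tree from \cref{main_theorem_cor}, the contraction and planar embedding, the book ordering of an outerplanar graph, and the explicit routing --- so the construction runs in $O(n)$ time.
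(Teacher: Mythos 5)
Your high-level reduction is exactly the paper's: isolate the claim that the leaves of a spanning tree form a one-bend free set (the paper's \cref{spanning_tree_to_collinear_set}), triangulate $G$, apply \cref{main_theorem_cor}, and restrict the drawing back to $G$. Your structural observations are also sound: $T-L$ witnesses that $G[V(G)\setminus L]$ is connected, and contracting it to a universal vertex shows $G[L]$ is outerplane with outer cyclic order given by the rotation at $x^*$. Where you diverge is in the proof of the routing lemma itself, and that is where there is a genuine gap. The paper never constructs a point-set embedding directly: it builds a purely topological \emph{$2$-proper good curve} through the leaves (by walking an envelope of a DFS traversal of $T$), subdivides each doubly-crossed edge to reduce to the characterization of straight-line collinear sets by $1$-proper good curves (\cref{straightline-topological}), and then inherits the ``for every point set $P$'' quantifier from the cited theorem that every collinear set is free. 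All of the geometric difficulty of arbitrary point sets is outsourced to that theorem.

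Your direct construction has to face that difficulty head-on, and as written it does not. The phrases ``left-to-right order of $P$,'' ``upper half-plane,'' and ``bend high enough to clear the arcs nested beneath it'' describe a valid construction when $P$ lies on a line or on a convex curve compatible with the spine order, but a one-bend free set must work for \emph{every} $P\subseteq\R^2$, including point sets whose $y$-coordinates are wildly out of order relative to the spine. For scattered points, realizing a one-page (laminar) family of edges with one bend each is essentially the Kaufmann--Wiese theorem that subhamiltonian graphs admit one-bend point-set embeddings; it is true but not a one-line induction, and here it must be established simultaneously with two extra constraints you do not address: (i) the leaf-to-$X$ edges must descend past arbitrarily placed points of $P$ and past the upper arcs without crossings; and (ii) each such edge has only one bend in total, which must both accomplish that descent and approach its attachment vertex on the outer face of the expanded drawing of $G[X]$ from a direction that does not cross $G[X]$ or the other fan edges --- attachment vertices on the far side of the blob are the problematic case, and a ``near-straight'' descent cannot reach them. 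None of these steps is obviously false, but each needs a real argument, and together they amount to reproving a special case of one-bend point-set embeddability from scratch. A cleaner repair that keeps your picture is to use the contraction only to exhibit a $2$-proper good curve through the leaves (a Jordan curve hugging the drawing of $G[X]$ and passing through each leaf in the rotation order at $x^*$) and then invoke the collinear-implies-free machinery, which is in essence what the paper does.
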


Note that if the point set $P$ is contained in the $x$-axis then no edge with both endpoints in $Y$ crosses the $x$-axis, so this one bend drawing is a $2$-page book-embedding of the induced graph $G[Y]$. This implies that $G[Y]$ is a spanning subgraph of some Hamiltonian triangulation $G[Y]^+$.  The Goldner–Harary graph is an $11$-vertex triangulation that is not Hamiltonian. It follows that the graph $G$ obtained by taking $k$ vertex-disjoint copies of the Goldner-Harary graph has $n:=11k$ vertices and has no one-bend free set of size greater than $10k=10n/11=0.90\overline{90}n$.

\subsection{Related Work}
\label{related_work}

\paragraph{Connected Dominating Sets}

The book chapter by \citet{chellali.favaron:connected} surveys combinatorial results on connected-dominating sets, including the result of \citet{kleitman.west:spanning} mentioned above.  Most results of this form focus on graphs with lower bounds on their minimum degree, possibly combined with some other constraints.  For example, the Kleitman-West result relevant to triangulations is about graphs of minimum-degree $3$ in which each edge participates in a $3$-cycle.

\citet{wan.alzoubi.ea:simple} describe an algorithm for finding a connected dominating set in a $K_t$-minor-free graph. When run on an $n$-vertex planar graph, their algorithm produces a connected dominating set of size at most $15\alpha(G^2)-5$, where $\alpha(G^2)$ is the size of the largest \defin{distance-$2$ independent set} in $G$; i.e., the largest subset of $V(G)$ that contains no pair of vertices whose distance in $G$ is less than or equal to $2$.  However, there exists $n$-vertex triangulations $G$ with $\alpha(G^2)=n/4$,\footnote{To create such a triangulation, start with $n/4$ vertex-disjoint copies of $K_4$ embedded so that each copy contributes three vertices to the outer face and has one inner vertex, and then add edges arbitrarily to create a triangulation.  Then the inner vertices of the copies of the original copies of $K_4$ form an independent set in $G^2$.} for which this algorithm does not guarantee an output of size less than $n$.

As discussed above, the result of \citet{albertson.berman.ea:graphs} on spanning-trees without degree-$2$ vertices implies that every $n$-vertex triangulation has a connected dominating set of size $(n-2)/2$.  \citet{chen.ren.ea:homeomorphically} give a significant generalization of this result, which applies to any connected graph $G$ in which the graph induced by the neighbours of each vertex is connected.
 Motivated by  a graph drawing problem (SEFE without mapping) \citet{angelini.evans.ea:sefe} show that every $n$-vertex plane graph $G$ (and therefore every triangulation) contains an induced outerplane graph $G[L]$ with least $n/2$ vertices.  If $X$ is a connected dominating set in a triangulation $G$, then $G-X$ is an outerplane graph.\footnote{In \cref{discussion}, we show that the converse of this statement is almost true: the largest $Y\subset V(G)$ such that $G[Y]$ is outerplane and the smallest connected dominating set $X\subset V(G)$ satisfy $|X|+|Y|=|V(G)|$.}  Our \cref{main_result2} therefore implies an improved result for their graph drawing problem, improving the bound from $n/2$ to $11n/21$, as described in the following theorem:

\begin{thm}\label{sefenomap}
  For every $n$-vertex planar graph $G_1$ and every $\lceil 11n/21\rceil$-vertex planar graph $G_2$, there exists point sets $P_1\supseteq P_2$ with $|P_1|=n$, $|P_2|=\lceil 11n/21\rceil$ and non-crossing embeddings of $G_1$ and $G_2$ such that
  \begin{compactenum}[(a)]
    \item the vertices of $G_i$ are mapped to the points in $P_i$ for each $i\in\{1,2\}$;
    \item the edges of $G_2$ are drawn as line segments; and
    \item each edge $e$ of $G_1$ whose endpoints are both mapped to points in $P_2$ is drawn as a line segment.
  \end{compactenum}
\end{thm}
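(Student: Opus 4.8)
The plan is to obtain \cref{sefenomap} as a consequence of \cref{main_result2} by first reducing it to the existence of a large induced outerplane subgraph of $G_1$ and then invoking the simultaneous-drawing construction of \citet{angelini.evans.ea:sefe}. First I would augment $G_1$, by adding edges, to a triangulation $G_1^+$ on the same $n$ vertices. Applying \cref{main_result2} to $G_1^+$ yields a connected dominating set $X$ with $|X|\le 10n/21$. Setting $L:=V(G_1)\setminus X$ gives $|L|\ge 11n/21$, and since $X$ is a connected dominating set of the triangulation $G_1^+$, the graph $G_1^+-X=G_1^+[L]$ is outerplane. Deleting the edges of $G_1^+$ not present in $G_1$ cannot destroy outerplanarity, so $G_1[L]$ is outerplane as well; deleting surplus vertices of $L$ (which also preserves outerplanarity) I may assume $|L|=\lceil 11n/21\rceil$ exactly. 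This set $L$ is the set of shared vertices, and its image will be the common point set $P_2$.

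It remains to produce the two drawings. The shared vertices $L$ are to be placed at the points of $P_2$, and the two straightness requirements (b) and (c) say precisely that both $G_2$ and the induced subgraph $G_1[L]$ must have straight-line non-crossing drawings on this common $P_2$. I would first fix a straight-line planar drawing of $G_2$ (F\'ary's theorem), perturbed into general position; its vertex set is $P_2$, which settles (b). Because $G_1[L]$ is outerplane, it admits a straight-line non-crossing drawing whose vertices occupy $P_2$, and drawing $G_1[L]$ in this way settles (c) for all edges of $G_1$ with both endpoints in $L$. Finally I would extend this straight-line drawing of $G_1[L]$ to a non-crossing drawing of all of $G_1$ by inserting the vertices of $X$ at fresh points, which together with $P_2$ form a set $P_1\supseteq P_2$ with $|P_1|=n$, and routing the remaining edges of $G_1$ (those incident to $X$) as non-crossing curves. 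This yields (a) together with the required non-crossing embedding of $G_1$.

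The step I expect to be the main obstacle is the last one: the straight-line drawing of the outerplane graph $G_1[L]$ placed on the point set $P_2$ (which is dictated by $G_2$, not by $G_1$) induces some planar embedding of $G_1[L]$, and for the insertion of $X$ to succeed this embedding must be compatible with a planar embedding of the whole graph $G_1$. Guaranteeing this compatibility---choosing the drawing of $G_1[L]$ on $P_2$ so that it both realizes the outerplane embedding inherited from $G_1$ and leaves enough room in its faces to accommodate $X$ and its incident edges---is exactly the technical heart of the construction of \citet{angelini.evans.ea:sefe}. Since that machinery already takes as input an arbitrary plane graph together with a prescribed induced outerplane subgraph of any size $k$ and an arbitrary $k$-vertex planar graph, my only new contribution is to feed it the larger value $k=\lceil 11n/21\rceil$ furnished by \cref{main_result2}; the bound claimed in \cref{sefenomap} then follows immediately.
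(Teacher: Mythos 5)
Your proposal is correct and follows essentially the same route as the paper: the paper gives no separate proof of \cref{sefenomap}, deriving it exactly as you do by noting that the complement of the connected dominating set from \cref{main_result2} is an induced outerplane subgraph on at least $11n/21$ vertices, which is then fed into the existing construction of \citet{angelini.evans.ea:sefe} in place of their $n/2$-size set. You also correctly locate the technical heart (compatibly drawing $G_1[L]$ on the point set dictated by $G_2$ and reinserting $X$) in that cited machinery rather than claiming it as new.
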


In the introduction we describe an $n$-vertex triangulation (containing a sequence of nested triangles) for which every connected dominating set has size at least $n/3$.  Since this example contains many separating triangles it is natural to consider the special case of $4$-connected triangulations.  \citet{noguchi.zamfirescu:spanning} describe, for infinitely many values of $n$, $4$-connected $n$-vertex triangulations for which any connected dominating set has at least $n/3$ vertices.

In the current paper, we consider \defin{triangulations}; edge-maximal planar graphs. \citet{hernandez:vigilancia} and \citet{chen.hao.ea:bounds} show that every edge-maximal \emph{outerplanar} graph has a connected dominating set of size at most $\lfloor (n-2)/2\rfloor$. This immediately implies that every Hamiltonian $n$-vertex triangulation (including every $4$-connected $n$-vertex triangulation) has a connected dominating set of size at most $\lfloor (n-2)/2\rfloor$.  For even $n\ge 4$, the bound $(n-2)/2$ for edge-maximal outerplanar graphs is tight, as is easily seen by constructing a graph with two degree-$2$ vertices $s$ and $t$ and $(n-2)/2$ disjoint edges, each of which separates $s$ from $t$.  \citet{zhuang:connected} parameterizes the problem by the number, $k$,  of degree-$2$ vertices, and shows that any $n$-vertex edge-maximal outerplanar graph with $k$ degree-$2$ vertices has a connected dominating set of size at most $\lfloor\min\{(n+k-4)/2,2(n-k)/3\}\rfloor$.

\paragraph{Free (i.e., Collinear) Sets}

It is not difficult to establish that a planar graph $G$ has a one-bend drawing in which all vertices of $G$ are drawn on the $x$-axis if and only if $G$ has a $2$-page book embedding. It is well-known that $2$-page graphs are \defin{subhamiltonian}; each such graph is a spanning subgraph of some Hamiltonian planar graph. As pointed out already, the non-Hamiltonian Goldner-Harary triangulation can be used to construct an $n$-vertex planar graph with no one-bend free set larger than $10n/11$.

The $x$-axis is just one example of an $x$-monotone function $f(x)=x$. Unsurprisingly, perhaps, this function turns out to be the most difficult for one-bend graph drawing.
\citet{DBLP:journals/comgeo/GiacomoDLW05} show that for \emph{any} strictly concave function $f:[0,1]\to\R$ and any planar graph $G$ there exists a function $x:V(G)\to[0,1]$ such that $G$
has a one-bend drawing in which each vertex $v$ of $G$ is drawn at the point $(x(v),f(x(v)))$.\footnote{Their result is actually considerably stronger: For any total order $<_G$ on $V(G)$ they provide a function $x$ such that $x(v) < x(w)$ if and only if $x<_G w$.}
  In our language, every $n$-vertex planar graph $G$ has a \defin{co-$f$-ular} set of size $n$.

Another interpretation of the result of \citet{DBLP:journals/comgeo/GiacomoDLW05} is that every strictly concave curve $C_f:=\{(x,f(x)):0\le x\le 1\}$ is \defin{one-bend universal}; for every planar graph $G$ there exists a one-bend drawing of $G$ in which the vertices of $G$ are mapped to points in $C_f$. \citet{DBLP:conf/gd/EverettLLW07} take this a step further and describe a \defin{one-bend universal} $n$-point set $S_n$ such that every $n$-vertex planar graph $G$ has a one-bend drawing with the vertices of $G$ drawn on the points in $S_n$. (In their construction, the points in $S_n$ happen to lie on a strictly concave curve.)
With further relaxation on the drawing of the edges, de Fraysseix et al. \cite{DBLP:journals/combinatorica/FraysseixPP90} show that \emph{any} set of $n$ points in the plane is \defin{two-bend universal}; for \emph{any} set of $n$ points in $\R^2$ and any $n$ vertex planar graph $G$ there exist a two-bend drawing of $G$ in which the vertices of $G$ are drawn on the points in $S$.

\subsection{Outline}

The remainder of this paper is organized as follows:  In \cref{strategy}, we describe the general strategy we use for finding connected dominating sets in triangulations.  In \cref{warm_up} we show that a simple version of this strategy can be used to obtain a connected dominating set of size at most $4n/7= 0.\overline{571428}n$.
In \cref{full_result} we show that a more careful construction leads to a proof of \cref{main_result2}.  In \cref{bounded_genus} we prove \cref{genus_result}.  In \cref{one_bend}, we discuss the connection between connected dominating sets and one-bend collinear drawings that leads to \cref{one_bend_collinear_thm}.

\section{The General Strategy}
\label{strategy}

Throughout this paper, we use standard graph-theoretic terminology as used, for example, by \citet{diestel:graph}.
For a graph $G$, let $|G|=|V(G)|$ denote the number of vertices of $G$.  A \defin{bridge} in a graph $G$ is an edge $e$ of $G$ such that $G-e$ has more connected components than $G$.  For a vertex $v\in G$, $N_G(v):=\{w\in V(G):vw\in E(G)\}$ is the \defin{open neighbourhood} of $v$ in $G$,  $N_G[v]:=N_G(v)\cup\{v\}$ is the \defin{closed neighbourhood} of $v$ in $G$.  For a vertex subset $S\subseteq V(G)$, $N_G[S]:=\bigcup_{v\in S} N_{G}[v]$ is the \defin{closed neighbourhood} of $S$ in $G$ and $N_G(S):=N_G[S]\setminus S$ is the \defin{open neighbourhood} of $S$ in $G$.  A set $X\subseteq V(G)$ \defin{dominates} a set $B\subseteq V(G)$ if $B\subseteq N_G[X]$.  Thus, $X$ is a dominating set of $G$ if and only if $X$ dominates $V(G)$.

A \defin{plane graph} is a graph equipped with a non-crossing embedding in $\mathbb{R}^2$.  A plane graph is \defin{outerplane} if all its vertices appear on the outer face.  A \defin{triangle} is a cycle of length $3$. A \defin{near-triangulation} is a plane graph whose outer face is bounded by a cycle and whose inner faces are all bounded by triangles.  A \defin{generalized near-triangulation} is a plane graph whose inner faces are bounded by triangles. Note that a generalized near triangulation may have multiple components, cut vertices, and bridges.

In several places we will make use of the following observation, which is really a statement about the triangulation contained in $xyz$.
\begin{obs}\label{useful_little_guy}
  Let $H$ be a generalized near-triangulation and let $xyz$ be a cycle in $H$.  Then,
  \begin{compactenum}
    \item If the interior of $xyz$ contains at least one vertex of $H$, then each of $x$, $y$, and $z$ has at least one neighbour in the interior of $xyz$.

    \item If the interior of $xyz$ contains at least two vertices of $H$, then at least two of $x$, $y$, and $z$ have at least two neighbours in the interior of $xyz$.
  \end{compactenum}
\end{obs}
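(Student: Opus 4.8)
The plan is to reduce everything to the structure inside the triangle $xyz$. Let $T$ be the subgraph of $H$ consisting of $x$, $y$, $z$ together with all vertices and edges of $H$ drawn in the closed region bounded by the cycle $xyz$. Since $xyz$ is a cycle, this region is a closed disc, and every face of $H$ contained in it is a bounded face of $H$, hence a triangle. Thus $T$ is a near-triangulation whose outer face is bounded by $xyz$ and all of whose inner faces are triangles; in particular the global pathologies allowed in a generalized near-triangulation (bridges, cut vertices, extra components) cannot occur strictly inside the disc. Both statements are really statements about $T$, so I would phrase everything in terms of the interior vertices and the rotations of edges around $x$, $y$, and $z$ in $T$, writing $k$ for the number of interior vertices.

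For the first part I would argue by contradiction. Suppose some boundary vertex, say $x$, has no neighbour in the interior. Then the only neighbours of $x$ in $T$ are $y$ and $z$, so $x$ has degree $2$ and the two edges $xy$ and $xz$ bound a single inner face incident to $x$. That face is a triangle with two of its sides meeting at $x$, so its third side joins $y$ to $z$ and the face is $xyz$ itself; hence $T$ has no interior vertices, contradicting $k\ge 1$. The same argument applies to $y$ and $z$ by symmetry.

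For the second part I would again argue by contradiction, using the first part. If fewer than two of $x,y,z$ have two or more interior neighbours, then at least two of them, say $x$ and $y$, have exactly one interior neighbour each; call them $a$ and $b$. Reading off the rotation at $x$, whose interior-side neighbours are exactly $y,a,z$ in this cyclic order, the two inner faces at $x$ are $xya$ and $xaz$, both triangles, which forces $a$ to be adjacent to both $y$ and $z$. The same reasoning at $y$ shows $b$ is adjacent to $x$; since $a$ is the \emph{only} interior neighbour of $x$, this gives $b=a$. Thus $a$ is a single interior vertex adjacent to all of $x,y,z$, and tracking the two faces incident to each of the edges $ax,ay,az$ shows these faces are exactly $xya$, $yza$, and $zxa$. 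Hence $a$ has degree $3$ and these three triangles tile the disc bounded by $xyz$, so $a$ is the only interior vertex, contradicting $k\ge 2$.

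The two contradiction set-ups are routine; the step that needs care is the face-tracking in the second part, where I must verify that identifying the two triangular faces incident to each of $ax,ay,az$ really exhausts the rotation around $a$ and leaves no room for a further interior vertex. This is precisely the point at which the triangularity of all inner faces of $T$ and the planarity of the embedding do the essential work, and it is the step I would write out most carefully.
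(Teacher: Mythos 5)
Your proof is correct. The paper states this observation without proof, offering only the remark that it is ``really a statement about the triangulation contained in $xyz$'' --- which is exactly the reduction to the near-triangulation $T$ that you carry out, and your rotation/face-tracking arguments at $x$, $y$, and $a$ are sound.
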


For a plane graph $H$, we use the notation $B(H)$ to denote the vertex set of the outer face of $H$ and define $I(H):=V(H)\setminus B(H)$.  The vertices in $B(H)$ are \defin{boundary vertices} of $H$ and the vertices in $I(G)$ are \defin{inner vertices} of $H$. For any vertex $v$ of $H$, the \defin{inner neighbourhood} of $v$ in $H$ is defined as $N_H^+(v):=N_H(v)\cap I(H)$, the vertices in $N^+_H(v)$ are \defin{inner neighbours} of $v$ in $H$, and $\deg^+_H(v)=|N^+_H(v)|$ is the \defin{inner degree} of $v$ in $H$.

Let $G$ be a triangulation.  Our procedure for constructing a connected dominating set $X$ begins with an incremental phase that eats away at $G$ ``from the outside.'' The process of constructing $X$ is captured by the following definition:   A vertex subset $X\subseteq V(G)$ is \defin{outer-domatic} if it can be partitioned into non-empty subsets $\Delta_0,\Delta_1,\ldots,\Delta_{r-1}$ such that
\begin{compactenum}[(P1)]
    \item $\Delta_0\subseteq B(G)$; \label{outer_face}
    \item $\Delta_i\subseteq B(G-(\bigcup_{j=0}^{i-1}\Delta_j))$ for each $i\in\{1,\ldots,r-1\}$; and \label{incremental}
    \item $G-(\bigcup_{j=0}^{r-1}\Delta_j)$ is outerplane. \label{outerplanar}
\end{compactenum}

\begin{lem}\label{outer_domatic}
    Let $G$ be a triangulation.  Then any outer-domatic $X\subseteq V(G)$ is a connected dominating set of $G$.
\end{lem}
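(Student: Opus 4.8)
The plan is to verify the two defining properties of a connected dominating set separately: that $G[X]$ is connected, and that $X$ dominates $V(G)$. Throughout I would write $R_i:=\bigcup_{j=0}^{i-1}\Delta_j$ for the set of vertices removed before stage $i$, so that $G_i:=G-R_i$ satisfies $G_0=G$ and $G_r=G-X$, and condition (P2) reads $\Delta_i\subseteq B(G_i)$. Since $G$ is a triangulation, its outer face $B(G)$ is bounded by a triangle on three mutually adjacent vertices, which I will call $a,b,c$; every other vertex of $G$ lies in $I(G)$. The workhorse of the whole argument is an elementary ``wheel'' observation: if $v\in I(G)$ survives into a vertex-deleted subgraph $H$ of $G$ together with all of its $G$-neighbours, then the triangular faces of $G$ incident to $v$ remain (empty) faces of $H$, so they tile a closed disc around $v$ and force $v\in I(H)$. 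Equivalently, any $v\in I(G)\cap B(H)$ has a $G$-neighbour outside $V(H)$.

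For connectivity I would induct on $i$ to show that $G[R_i]$ is connected, the case $i=r$ being the claim. The base case $G[R_1]=G[\Delta_0]$ is immediate from (P1): $\Delta_0$ is a nonempty subset of the triangle $abc$, hence a clique, hence connected. For the inductive step it suffices to show that every $v\in\Delta_i$ with $i\ge 1$ has a neighbour in $R_i$; then, since $G[R_i]$ is connected and each vertex of $\Delta_i$ attaches to it by an edge, $G[R_{i+1}]=G[R_i\cup\Delta_i]$ is connected. To produce this backward edge I split on whether $v$ is a corner. If $v\notin\{a,b,c\}$ then $v\in I(G)$, and since $v\in\Delta_i\subseteq B(G_i)$ the wheel observation applied to $H=G_i$ yields a $G$-neighbour of $v$ outside $V(G_i)$, i.e.\ in $R_i$. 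If $v\in\{a,b,c\}$ then $v\notin\Delta_0$, so the nonempty set $\Delta_0\subseteq\{a,b,c\}\setminus\{v\}$ contains one of the other two corners, which is adjacent to $v$ and lies in $R_i$.

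For domination I must show that each $v\in V(G_r)=V(G)\setminus X$ has a neighbour in $X$. If $v\in\{a,b,c\}$, the same reasoning as above applies verbatim: since $\Delta_0$ is a nonempty subset of the triangle not containing $v$, some corner in $\Delta_0\subseteq X$ is adjacent to $v$. If $v\notin\{a,b,c\}$ then $v\in I(G)$, and by (P3) the graph $G_r$ is outerplane, so $v\in B(G_r)$; the wheel observation, now with $H=G_r$, gives a $G$-neighbour of $v$ outside $V(G_r)$, that is, a neighbour in $X$.

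The main obstacle, and the place where the hypotheses are genuinely used, is establishing the backward edge for connectivity, namely ruling out a vertex of $\Delta_i$ with no neighbour in $R_i$. The wheel observation disposes of every non-corner vertex cleanly, because an interior vertex of $G$ that retains its entire neighbourhood cannot sit on the boundary of $G_i$. The delicate case is a corner, for which the wheel observation says nothing; there the argument rests entirely on (P1) and the nonemptiness of $\Delta_0$, which guarantee that some corner is removed first and therefore both connects and dominates the other two. I would take particular care with the face-preservation step inside the wheel observation, that a triangular face of $G$ all of whose vertices survive remains an empty face of $H$, since the rest of the proof depends on it.
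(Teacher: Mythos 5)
Your proof is correct and follows essentially the same route as the paper: an induction showing $G[R_i]$ stays connected because every newly chosen vertex either lies on the original outer triangle (and is adjacent to the nonempty $\Delta_0$) or is an interior vertex that can only have reached the boundary by losing a neighbour, with the same dichotomy giving domination via (P3). Your ``wheel observation'' is just an explicit justification of the step the paper states without proof, namely that a vertex of $B(G-X_{i-1})\setminus B(G)$ must be adjacent to a removed vertex.
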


\begin{proof}
  Suppose $X$ is outer-domatic and let $\Delta_0,\ldots,\Delta_{r-1}$ be the corresponding partition of $X$.  For each $i\in\{1,\ldots,r\}$, let $X_i:=\bigcup_{j=0}^{i-1} \Delta_i$.  First observe that, since $\Delta_0\subseteq B(G)$ is non-empty, $X_i$ contains at least one vertex of $B(G)$, for each $i\in\{1,\ldots,r\}$. We claim that,
  \begin{compactenum}[(P1)]\setcounter{enumi}{3}
    \item for each $i\in\{2,\ldots,r\}$ each vertex in $B(G-X_{i-1})$ is adjacent to some vertex in $X_{i-1}$. \label{adjacent}
  \end{compactenum}
  Indeed, for any $i\in\{2,\ldots,r\}$ each vertex $v\in B(G-X_{i-1})$ is either in $B(G)$ or adjacent to a vertex in $X_{i-1}$. Even in the former case, (P1) ensures that $v$ is adjacent to a vertex in $X_1=\Delta_0\subseteq X_{i-1}$, because $G[B(G)]$ is a clique.

  We now prove, by induction on $i$, that $G[X_i]$ is connected, for each $i\in\{1,\ldots,r\}$.
  The fact that $G[B(G)]$ is a clique and \pref{outer_face} implies that $G[X_1]=G[\Delta_0]$ is connected. For each $i\in\{2,\ldots,r\}$, the assumption that $G[X_{i-1}]$ is connected, \pref{incremental}, and \pref{adjacent} then imply that $G[X_i]=G[X_{i-1}\cup\Delta_{i-1}]$ is connected.

  In particular $G[X_r]=G[X]$ is connected.  Finally, \pref{adjacent}, with $i=r$ and \pref{outerplanar} implies that $N_G(X_r)=B(G-X_r)=V(G-X_r)$, so $X_r=X$ is a dominating set of $G$.
\end{proof}

We will present two algorithms that grow a connected dominating set in small batches $\Delta_0,\Delta_1,\ldots,\Delta_{r-2}$ that result in a sequence of sets $X_1,\ldots,X_{r-1}$ where $X_{i}=\bigcup_{j=0}^{i-1}\Delta_j$.  Each of these algorithms is unable to continue once they reach a point where each vertex in $B(G-X_i)$ has inner-degree at most $1$ in $G-X_i$.  We begin by studying the graphs that cause this to happen.

\subsection{Critical Graphs}

A generalized near-triangulation $H$ is \defin{critical} if $\deg^+_H(v)\le 1$ for each $v\in B(H)$. We say that an inner face of $H[B(H)]$ is \defin{marked} if it contains an inner vertex of $H$.

\begin{figure}[htbp]
    \centering
    \includegraphics[page=1]{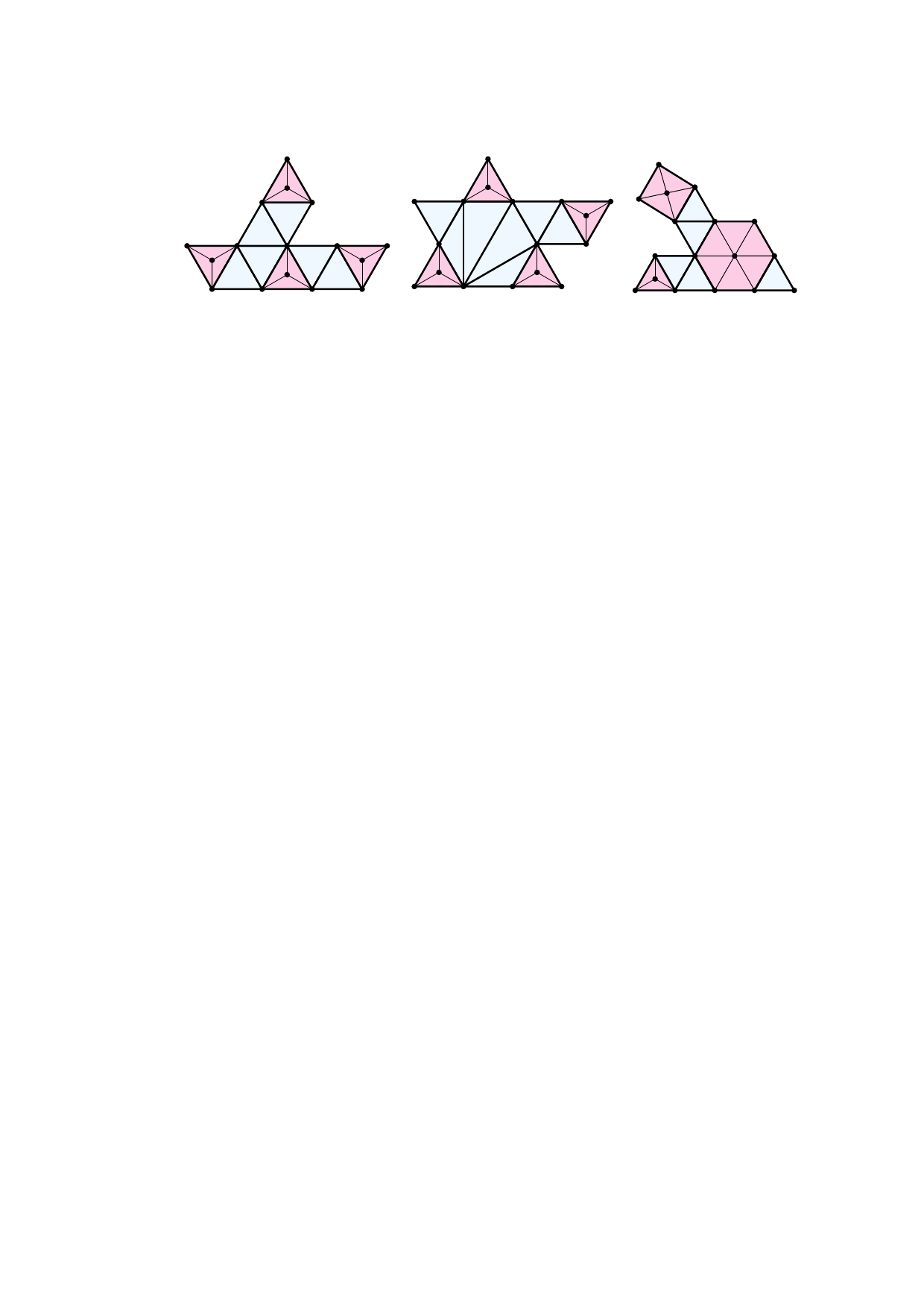}
    \caption{Some critical graphs.}
    \label{critical_fig}
\end{figure}

\begin{lem}\label{critical_structure}
    Let $H$ be a critical generalized near-triangulation. Then each $f$ face of $H[B(H)]$ contains at most one vertex of $I(H)$ and this vertex is adjacent to every vertex of $f$.
\end{lem}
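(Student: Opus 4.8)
The plan is to analyze the structure of a critical generalized near-triangulation $H$ by focusing on a single inner face $f$ of $H[B(H)]$ and controlling how many inner vertices of $H$ can lie inside $f$. The key point to exploit is the defining property of criticality: every boundary vertex $v\in B(H)$ satisfies $\deg^+_H(v)\le 1$, meaning each boundary vertex has at most one neighbour among the inner vertices $I(H)$.

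First I would fix an inner face $f$ of the outerplane graph $H[B(H)]$ and let $C$ denote its bounding cycle, whose vertices all lie in $B(H)$. Every inner vertex of $H$ lying strictly inside $f$ is a vertex of $I(H)$, and any edge joining such a vertex to a vertex of $C$ is counted toward the inner degree of that boundary vertex. The heart of the argument is a counting/extremal step: I would suppose, toward a contradiction, that the interior of $C$ contains at least two vertices of $I(H)$. Since $H$ restricted to the closed region bounded by $C$ is itself a near-triangulation (its inner faces are triangles, being inner faces of $H$), I can invoke \cref{useful_little_guy}(2): if the interior of the cycle $C$ contains at least two vertices, then at least two vertices of $C$ have at least two neighbours in the interior. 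But those two-or-more interior neighbours are inner vertices of $H$, so any such vertex of $C$ would have inner degree at least $2$ in $H$, contradicting criticality. Hence the interior of $f$ contains at most one inner vertex.

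Having established that $f$ contains at most one vertex $w$ of $I(H)$, I would then show that this $w$ is adjacent to every vertex of $f$. Here I apply \cref{useful_little_guy}(1): since the interior of $C$ contains (exactly) one vertex $w$, each vertex of $C$ has at least one neighbour in the interior, and that neighbour can only be $w$. Therefore every vertex of $f$ is adjacent to $w$, which is precisely the claim. A minor subtlety I would need to address is that \cref{useful_little_guy} is phrased for a $3$-cycle $xyz$, whereas an inner face of $H[B(H)]$ may be bounded by a longer cycle; I would handle this either by first triangulating the interior region (the interior faces of $H$ inside $f$ are already triangles, so $H$ provides such a triangulation) and applying the observation to the triangulation contained inside $C$, or by noting that the observation's proof really only uses that the region is a near-triangulation, as the remark preceding \cref{useful_little_guy} suggests.

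The main obstacle I anticipate is the bookkeeping around the distinction between the ``inner faces'' of $H[B(H)]$ and the structure of $H$ inside them, together with carefully verifying that \cref{useful_little_guy} applies to a general bounding cycle rather than literally a triangle. The counting argument itself is short once the right near-triangulation is isolated; the care lies in confirming that every neighbour counted by \cref{useful_little_guy} is genuinely an \emph{inner} vertex of $H$ (so that it contributes to $\deg^+_H$) and that no boundary vertex's inner-degree budget is exceeded. Once these technical points are pinned down, both conclusions of the lemma follow immediately from the two parts of \cref{useful_little_guy}.
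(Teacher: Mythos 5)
Your overall strategy is sound and isolates the right two facts (at most one inner vertex per face; adjacency to all of $f$), but the central step rests on applying \cref{useful_little_guy} to an object it does not cover. That observation is stated only for a $3$-cycle $xyz$ of $H$, whereas a face $f$ of $H[B(H)]$ is in general bounded by a longer cycle (for instance, the wheel $W_5$ is critical and its unique marked face is a $5$-cycle). Your first proposed repair, ``triangulating the interior region,'' does not help: the interior of $f$ is already triangulated by $H$, and doing so does not turn the bounding cycle into a triangle. Your second repair asserts that the observation generalizes to arbitrary cycles, but part~1 of that generalization is false as stated: take a $4$-cycle $v_0v_1v_2v_3$ with chord $v_0v_2$ and a single vertex inside the triangle $v_0v_1v_2$; then $v_3$ has no neighbour in the interior of the $4$-cycle. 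What saves your argument is that the cycle you actually use bounds a \emph{face} of $H[B(H)]$ and hence has no chords inside the region; this hypothesis is essential and must be invoked explicitly. With it, the statements you need do hold --- e.g.\ by an edge count, a triangulated $k$-gon with $m\ge 2$ interior vertices has at least $k+2$ boundary-to-interior edges, so \emph{some} boundary vertex has two interior neighbours, and note that a single such vertex already contradicts criticality, so you never need the ``at least two'' conclusion of part~2 --- but as written the proposal leans on an unproved and, without chordlessness, incorrect extension of the paper's observation. That is the gap.

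For comparison, the paper sidesteps the issue by arguing from the inner vertex rather than from the bounding cycle: it picks an edge $vx$ with $v$ on $f$ and $x$ in the interior of $f$, and walks around the link of $x$, which is a cycle since $x$ is an inner vertex of a generalized near-triangulation; the condition $\deg^+_H(\cdot)\le 1$ forces every vertex of that link into $B(H)$, so the link must be exactly the cycle bounding $f$. This yields uniqueness of $x$ and its adjacency to every vertex of $f$ in one pass, with no need to generalize \cref{useful_little_guy}. If you prefer to keep your counting route, you should first state and prove the chordless-cycle version of the observation as a separate lemma.
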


\begin{proof}
  Let $B:=B(H)$ and $I:=I(H)$.
  By definition, the graph $H[B]$ is outerplanar.  Consider some marked face $f$ of $H[B]$.  This face is marked because it contains at least one vertex in $I$.  Since $H$ is a triangulation, there is an edge $vx$ in $H$ with $v\in B$ on the boundary of $f$ and $x\in I$ in the interior of $f$. Since $H$ is a generalized near-triangulation and $x$ is an inner vertex of $H$, the edge $vx$ is on the boundary of two faces $vxv_1$ and $vxv_{k-1}$ of $H$ with $v_1\neq v_{k-1}$.  Since $\deg^+_H(v)=1$, each of $v_1$ and $v_{k-1}$ are in $B$.  By the same argument, $H$ contains a face $v_1xv_2$ with $v_2\in B$, $v_2\neq v_1$, and repeating this argument shows that $v,v_1,v_2,\ldots,v_{k-1}$ is the cycle in $H[B]$ that bounds $f$.  Therefore, $f$ contains exactly one vertex $x$ of $I$ and $x$ is adjacent to each vertex of $f$.
\end{proof}

\begin{lem}\label{base_case}
    Let $H$ be a critical generalized near-triangulation. Then $|B(H)|\ge 3|I(H)|$ and there exists $\Delta\subseteq B(H)$ of size at most $|I(H)|$ that dominates $I(H)$.
\end{lem}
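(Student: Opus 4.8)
The plan is to build everything on top of the structural description provided by \cref{critical_structure}. Write $B:=B(H)$ and $I:=I(H)$. Since $H$ is outerplane on its boundary, every vertex of $I$ lies in the interior of exactly one inner face of the outerplane graph $H[B]$, and by \cref{critical_structure} each face contains at most one such vertex, with that vertex adjacent to every vertex of the face. Hence there is a bijection between $I$ and the set $\mathcal{F}$ of marked faces of $H[B]$; in particular $|\mathcal{F}|=|I|$. For each marked face $f$ I will write $x_f$ for its unique inner vertex and recall that $x_f$ is adjacent to all vertices on the bounding cycle of $f$.

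For the domination statement this is already enough. For each $f\in\mathcal{F}$ I would choose a single vertex $v_f$ on the bounding cycle of $f$ and set $\Delta:=\{v_f:f\in\mathcal{F}\}$. Since $x_f$ is adjacent to $v_f$, the vertex $v_f$ dominates $x_f$, and as every inner vertex is the $x_f$ of exactly one marked face, $\Delta$ dominates all of $I$. Because $\Delta$ contains at most one vertex per marked face, $|\Delta|\le|\mathcal{F}|=|I|$, as required.

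The heart of the lemma is the inequality $|B|\ge 3|I|$, and the key observation is that criticality forces the bounding cycles of the marked faces to be pairwise vertex-disjoint. Indeed, suppose some $v\in B$ lay on the bounding cycles of two distinct marked faces $f\neq f'$. Then $v$ would be adjacent to both $x_f$ and $x_{f'}$, which are distinct inner vertices, so $\deg^+_H(v)\ge 2$, contradicting the assumption that $H$ is critical. Therefore each boundary vertex lies on the bounding cycle of at most one marked face. Each such bounding cycle is a cycle, hence uses at least three distinct vertices of $B$, and these vertex sets are disjoint across the $|\mathcal{F}|=|I|$ marked faces. Summing gives $|B|\ge 3|\mathcal{F}|=3|I|$.

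The only place requiring care—and the step I would double-check—is the clean bijection between inner vertices and marked faces: I must confirm that every vertex of $I$ really does fall into a bounded face of $H[B]$, so that it is accounted for by some marked face, and that the boundary of each marked face is a genuine cycle of length at least three rather than a degenerate closed walk arising from bridges or cut vertices of $H[B]$. Both facts are delivered by \cref{critical_structure} together with the outerplanarity of $H[B]$, so no work beyond invoking that lemma should be needed; the disjointness argument above then does all the real counting.
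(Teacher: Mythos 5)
Your proof is correct and follows essentially the same route as the paper: both arguments use \cref{critical_structure} to put the inner vertices in bijection with the marked faces of $H[B]$, observe that criticality ($\deg^+_H(v)\le 1$) forces these faces' bounding cycles to be pairwise vertex-disjoint and of length at least $3$, and then pick one boundary vertex per marked face to form $\Delta$. Your version is if anything slightly more careful, since the paper asserts each boundary vertex lies on \emph{exactly} one marked face whereas criticality only guarantees \emph{at most} one, which is all the counting needs.
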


\begin{proof}
  Let $B:=B(H)$ and $I:=I(H)$.  If $I$ is empty then the result is trivially true, by taking $\Delta:=\emptyset$, so we now assume that $I$ is non-empty.  By \cref{critical_structure}, $H$ is formed from the outerplanar graph $H[B]$ by adding $|I|$ stars, one in the interior of each marked face of $H[B]$.  Since $\deg_H^+(v)=1$ for each $v\in B$, each vertex of $H[B]$ is on the boundary of exactly one marked face.  For each vertex $w\in I$, the marked face $f$ of $H[B]$ that contains $w$ has at least $3$ vertices, which do not belong to any other marked face. Therefore $|B|\ge 3|I|$ and by choosing one vertex from each marked face of $H[B]$ we obtain the desired set $\Delta$.
\end{proof}

\section{A Simple Algorithm}
\label{warm_up}

We start with the simplest possible greedy algorithm, that we call $\textsc{SimpleGreedy}(G)$, to choose $\Delta_0,\ldots,\Delta_{r-1}$.  Suppose we have already chosen $\Delta_0,\ldots,\Delta_{i-1}$ for some $i\ge 0$ and we now want to choose $\Delta_i$.  Let $X_i:=\bigcup_{j=0}^{i-1}\Delta_j$, let $G_i:=G-X_i$, and let $v_i$ be a vertex in $B(G_i)$ that maximizes $\deg^+_{G_i}(v_i)$.  During iteration $i\ge 0$, there are only two cases to consider:
\begin{compactenum}[{[}g1{]}]
    \item If $\deg^+_{G_i}(v_i)\ge 2$ then we set $\Delta_i\gets\{v_i\}$.
    \item If $\deg^+_{G_i}(v_i)\le 1$ for all $v\in B(G_i)$ then $G_i$ is critical and this is the final step, so $r:=i+1$.  By \cref{base_case}, there exists $\Delta_i\subseteq B(G_i)$ of size at most $|I(G_i)|$ that dominates $I(G_i)$. Then $X_r:=X_{r-1}\cup\Delta_{i}$ and we are done.
\end{compactenum}

\begin{thm}\label{simple_greedy}
  When applied to an $n$-vertex triangulation $G$,  $\textsc{SimpleGreedy}(G)$ produces a connected dominating set $X_r$ of size at most $(4n-9)/7$.
\end{thm}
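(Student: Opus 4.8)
The plan is to control $|X_r|$ through three quantities: the number $m := r-1$ of greedy steps of type [g1], and the sizes $b := |B(H)|$ and $t := |I(H)|$ of the final critical graph $H := G_{r-1}$. Each step [g1] adds exactly one vertex to $X$, and by \cref{base_case} the terminal step [g2] adds at most $|I(G_{r-1})| = t$ vertices, so $|X_r| \le m + t$. Since the $m$ greedily-removed vertices, together with $B(H)$ and $I(H)$, partition $V(G)$, we also have $n = m + b + t$, and \cref{base_case} gives $b \ge 3t$. So far this only yields a bound in terms of $m$, and the whole argument hinges on a fourth relation bounding $m$.

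That relation is $m \le b - 3$, which I would obtain by tracking how the outer boundary evolves. First I would argue that once a vertex lies on the outer boundary it stays there until it is deleted: removing a vertex only merges incident inner faces into the outer face, which can expose new vertices but never re-buries an old one. Consequently the set of vertices that ever touch the boundary during the greedy phase is exactly $\{v_0,\dots,v_{m-1}\} \cup B(H)$, of size $m + b$. On the other hand this set starts as the $3$-cycle $B(G)$ and, at step $i$, grows by exactly those inner neighbours of $v_i$ that become exposed; since the neighbours of the boundary vertex $v_i$ form a fan and deleting $v_i$ exposes the entire fan, this is precisely $\deg^+_{G_i}(v_i)$ new vertices. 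Writing $d_i := \deg^+_{G_i}(v_i) \ge 2$ (the [g1] condition) and telescoping gives $m + b = 3 + \sum_{i=0}^{m-1} d_i \ge 3 + 2m$, that is, $m \le b - 3$.

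With all four relations the theorem is pure arithmetic. From $m \le b-3$ and $3t \le b$ we get $3(m+t) \le (3b-9) + b = 4b - 9$; substituting $4b = 4(n-m-t)$ and rearranging yields $7(m+t) \le 4n - 9$, whence $|X_r| \le m+t \le (4n-9)/7$. (The bound is tight: taking $d_i = 2$ for every $i$ together with $b = 3t$ forces equality throughout.) Finally, that $X_r$ is a \emph{connected dominating set} follows from \cref{outer_domatic} once we verify \pref{outerplanar}: removing from $H$ one boundary vertex of each marked face exposes every inner vertex of $H$, because by \cref{critical_structure} each such inner vertex is the apex of a fan whose base cycle loses a vertex, so $G - X_r$ is outerplane.

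The main obstacle I anticipate is making the boundary-tracking step $m \le b-3$ fully rigorous in the \emph{generalized} near-triangulation setting, where deletions may create cut vertices, bridges, and several components. The two facts it rests on — that boundary vertices persist until deleted, and that deleting $v_i$ exposes exactly its $\deg^+_{G_i}(v_i)$ inner neighbours — are visually obvious when the region is a disc but require care when the outer face is bounded by a closed walk rather than a simple cycle. Everything downstream of that inequality is elementary.
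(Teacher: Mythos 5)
Your proof is correct and follows essentially the same route as the paper's: the paper derives the two constraints $|B(G_{r-1})|\ge 3|I(G_{r-1})|$ (from \cref{base_case}) and, via the growth of the dominated set $D_i=N_G[X_i]$, the inequality equivalent to your $m\le b-3$, then optimizes; your ``ever-exposed boundary'' set is exactly the paper's dominated set $D_i=X_i\cup B(G_i)$, so the telescoping argument is the same bookkeeping in different clothing. The final arithmetic (you do it by hand, the paper as a small LP) and the appeal to \cref{outer_domatic} for connectivity and domination also coincide.
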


\begin{proof}
By the choice of $\Delta_0,\ldots,\Delta_{r-1}$, $X_r$ is an outer-domatic subset of $V(G)$ so, by \cref{outer_domatic}, $X_r$ is a connected dominating set of $G$.  All that remains is to analyze the size of $X_r$.  For each $i\in\{1,\ldots,r\}$, let $D_i:=N_G[X_i]$ be the subset of $V(G)$ that is dominated by $X_i$, let $I_i:=V(G)\setminus D_i$ be the subset of $V(G)$ not dominated by $X_i$, and let $B_i:=N_G(I_i)$ be the vertices of $G$ that have at least one neighbour in each of $X_i$ and $I_i$.  We use the convention that $D_0:=B(G)$.

First observe that, for $i\in\{0,\ldots,r-2\}$, $|D_{i+1}|\ge |D_i|+\deg_{G_i}^+(v_i)$ since $D_{i+1}\supseteq D_i$ and $D_{i+1}$ contains the $\deg_{G_i}^+(v_i)$ inner neighbours of $v_i$ in $G_i$.  Therefore
\[
    |D_{r-1}| \ge |D_0| + \sum_{i=0}^{r-2} \deg_{G_i}^+(v_i) \ge 3 + \sum_{i=0}^{r-2} 2 =  2r+1 \enspace . \label{double_d}
\]
Since $D_{r-1}$ and $I_{r-1}$ partition $V(G)$,
\begin{equation}
  n = |D_{r-1}| + |I_{r-1}| \ge 2r+1 + |I_{r-1}|  \enspace . \label{c1}
\end{equation}

Since $X_{r-1}$ and $B_{r-1}$ are disjoint and $D_{r-1}\supseteq B_{r-1}\cup X_{r-1}$, we have $|D_{r-1}|\ge |X_{r-1}| + |B_{r-1}|=r-1+|B_{r-1}|$.  Therefore,
\begin{align}
    n & = |D_{r-1}| + |I_{r-1}| \ge r-1 + |B_{r-1}| + |I_{r-1}| \ge r - 1 + 4|I_{r-1}| \enspace , \label{c2}
\end{align}
where the last inequality follows from \cref{base_case}.

The final dominating set $X_r$ has size $|X_r| = |X_{r-1}| + \Delta_{r-1} = r - 1 +|I_{r-1}|$, so the size of $|X_r|$ can be upper-bounded by maximizing $r-1+|I_{r-1}|$ subject to \cref{c1,c2}.  More precisely, by setting $x:=r$ and $y:=|I_{r-1}|$, the maximum size of $X_r$ is upper-bounded by the maximum value of $x-1+y$ subject to the constraints
\begin{align*}
x,y \ge 0 \\
  x - 1 + 4y & \le n \\
  2x + 1 + y & \le n
\end{align*}

This is an easy linear programming exercise and the maximum value of $X_{r}$ is obtained when $r=(3n-5)/7$ and $|I_{r-1}|=(n+3)/7$, which gives
$|X_r| \le (4n-9)/7$.
\end{proof}

\section{A Better Algorithm: Proof of \cref{main_result2}}
\label{full_result}

Next we devise an algorithm that produces a smaller connected dominating set than what $\textsc{SimpleGreedy}(G)$ can guarantee.  This involves a more careful analysis of the cases in which \textsc{SimpleGreedy} is forced to take a vertex $v_i$ with $\deg^+_{G_i}(v_i)=2$.  We will show that in most cases, any time the algorithm is forced to choose a vertex $v$ that has inner-degree $2$ in $G_i$, this can immediately be followed by choosing a vertex $w$ that has inner-degree at least $3$ in $G_i-v$.  This is explained in \cref{combo}.

When this is no longer possible, the algorithm will be forced to directly handle a graph $G_i$ in which $\deg^+_{G_i}(v)\le 2$ for all $v\in B(G_i)$ and $G_i-(B_i)$ is critical.  In \cref{two_critical_section} we explain how this can be done using a set $X_{r-1}$ whose size depends only on $G_i-B(G_i)$.  The results in \cref{two_critical_section} require that the graph $G_i-B(G_i)$ not have any vertices of degree less than $2$.  The steps required to eliminate degree-$1$ and degree-$0$ vertices from $G_i-B(G_i)$ are explained in \cref{zero_kill_sec,one_kill_sec}.

\subsection{Dom-Minimal Dom-Respecting Graphs}

We begin by identifying unnecessary vertices and edges that can appear in the graphs $G_1,\ldots,G_{r-1}$ during the construction of $X$.   We say that a near-triangulation $H$ is \defin{dom-minimal} if
\begin{compactenum}[({DM}1)]
    \item each vertex $v\in B(H)$ has $\deg^+_H(v)\ge 1$;  \label[dm]{bad_vertex}
    \item for each $v\in B(H)$ with $\deg^+_H(v)=1$,  $H[N_H[v]]$ is isomorphic to $K_4$; and \label[dm]{inner_degree_1}
    \item each edge $vw$ on the boundary of the outer face of $H$ is also on the boundary of some inner face $vwx$ of $H$, where $x\in I(H)$. \label[dm]{bad_edge}
\end{compactenum}
We say that a generalized near-triangulation $H$ is \defin{dom-minimal} if each of its biconnected components are dom-minimal.

\begin{obs}\label{bridgeless}
    Any dom-minimal generalized near-triangulation $H$ is bridgeless.
\end{obs}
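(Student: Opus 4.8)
The plan is to exploit the defining property of a bridge together with the block-by-block nature of dom-minimality. Recall the standard fact that an edge $e=uv$ of a graph is a bridge if and only if $e$ lies on no cycle, which is equivalent to saying that the biconnected component (block) of $H$ containing $e$ is the trivial block consisting of $e$ together with its two endpoints; that is, this block is isomorphic to $K_2$. Thus, to prove that $H$ is bridgeless, it suffices to show that no block of a dom-minimal generalized near-triangulation is a single edge.

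The next step is to argue that a $K_2$ block cannot be dom-minimal. Since $H$ is dom-minimal, each of its blocks must be a dom-minimal near-triangulation. For a single-edge block $B=uv$, the outer face is not bounded by a cycle, so $B$ is not even a near-triangulation; more concretely, $B(B)=\{u,v\}$ while $I(B)=\emptyset$, and therefore $\deg^+_B(u)=\deg^+_B(v)=0$. This directly violates \cref{bad_vertex}, which requires every boundary vertex to have at least one inner neighbour. Consequently, the existence of a bridge in $H$ would force a block that fails dom-minimality, contradicting the hypothesis; hence $H$ has no bridge.

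I do not expect a genuine obstacle here: the entire argument reduces to the identification of bridges with trivial ($K_2$) blocks and a one-line check of \cref{bad_vertex} against a block with no inner vertices. The only point requiring care is the convention that dom-minimality of a \emph{generalized} near-triangulation is defined block-by-block, together with the fact that a single-edge block genuinely lies outside the scope of the near-triangulation definition; once these conventions are stated precisely, the conclusion is immediate.
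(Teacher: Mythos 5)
Your proof is correct and follows essentially the same approach as the paper: both arguments observe that a bridge lies in a trivial block and then check that this contradicts one of the dom-minimality conditions. The only difference is cosmetic — the paper derives the contradiction from \cref{bad_edge} (a bridge $vw$ lies on no cycle, hence on no inner face $vwx$), while you derive it from \cref{bad_vertex} applied to the $K_2$ block; both are valid one-line checks.
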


\begin{proof}
   If $vw$ is a bridge in $H$ then both $v$ and $w$ are in $B(H)$.  Since $vw$ is a bridge in $H$, there is no path $vxw$ in $H$ and hence no inner face $vwx$ in $H$. Thus $H$ does not satisfy \cref{bad_edge}.
\end{proof}

Let $H$ and $H'$ be two generalized near-triangulations.  We say that $H'$ \defin{dom-respects} $H$ if
\begin{compactenum}[({DP}1)]
  \item $B(H')\subseteq B(H)$; \label[dp]{boundary_subset}
  \item $I(H')=I(H)$; and \label[dp]{same_inner_vertices}
  \item $N_{H'}(v)\cap I(H)\subseteq N_H(v)\cap I(H)$ for all $v\in V(H')$. \label[dp]{same_neighbourhood}
\end{compactenum}

\begin{obs}
  Let $H$ and $H'$ be generalized near-triangulations where $H'$ dom-respects $H$ and let $\Delta'$ be a subset of $V(H')$ that dominates $I(H')$ in $H'$.  Then $\Delta'$ dominates $I(H)$ in $H$.
\end{obs}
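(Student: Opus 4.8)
The plan is to chase definitions directly, transferring domination from $H'$ to $H$ one inner vertex at a time; the target is to show that every $u\in I(H)$ lies in the closed $H$-neighbourhood $N_H[\Delta']$. Before doing so, I would first record the harmless but necessary fact that $\Delta'\subseteq V(H')\subseteq V(H)$: since $V(H')=B(H')\cup I(H')$, properties \cref{boundary_subset} and \cref{same_inner_vertices} give $B(H')\subseteq B(H)$ and $I(H')=I(H)$, so $V(H')\subseteq V(H)$. This guarantees that the closed $H$-neighbourhood $N_H[\Delta']$ is even well-defined.

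The core of the argument is a two-case split on a fixed but arbitrary $u\in I(H)$. By \cref{same_inner_vertices} we have $u\in I(H')$, so the hypothesis that $\Delta'$ dominates $I(H')$ in $H'$ supplies two possibilities. Either $u\in\Delta'$, in which case $u\in N_H[\Delta']$ trivially; or there is some $v\in\Delta'$ with $u\in N_{H'}(v)$. In the latter case, since $u\in I(H)$ as well, I would observe that $u\in N_{H'}(v)\cap I(H)$, and then invoke \cref{same_neighbourhood}, which asserts $N_{H'}(v)\cap I(H)\subseteq N_H(v)\cap I(H)$. This immediately yields $u\in N_H(v)$, so $u\in N_H[\Delta']$ again. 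As $u$ was arbitrary, this establishes $I(H)\subseteq N_H[\Delta']$, i.e.\ $\Delta'$ dominates $I(H)$ in $H$.

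I do not expect a genuine obstacle here: the statement is a direct unwinding of the three conditions defining $\mathrm{dom}$-respect. The only point that requires care is the \emph{direction} of the inclusion in \cref{same_neighbourhood}. Domination transfers the way we want precisely because $H$ retains at least as many inner adjacencies at each $v$ as $H'$ does, so any $H'$-adjacency that witnesses domination of an inner vertex persists in $H$. Conceptually, this is the lemma that legitimises replacing the working graph $H$ by a sparser, more structured $H'$ when searching for a dominating set of the inner vertices, without losing coverage back in $H$.
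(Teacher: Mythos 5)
Your proof is correct and follows essentially the same route as the paper's: identify $I(H)=I(H')$ via \cref{same_inner_vertices}, split on whether the inner vertex is in $\Delta'$ or adjacent to it in $H'$, and transfer the adjacency to $H$ via \cref{same_neighbourhood}. The extra remark that $V(H')\subseteq V(H)$ is a harmless bit of added care.
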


\begin{proof}
  By \cref{same_inner_vertices}, $I(H)=I(H')$. For each $w\in I(H)=I(H')$, $w\in \Delta'$ or there exists an edge $vw\in E(H')$ with $v\in\Delta'$.  In the latter case, $vw\in E(H)$ by \cref{same_neighbourhood}, so $\Delta'$ dominates $w$.
\end{proof}

\begin{lem}\label{dom_minimal}
  For any generalized near-triangulation $H$, there exists a dom-minimal generalized near-triangulation $H'$ that dom-respects $H$.
\end{lem}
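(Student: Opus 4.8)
The plan is to obtain $H'$ from $H$ by repeatedly applying local reductions, each producing a generalized near-triangulation that dom-respects the previous one, until none of \cref{bad_vertex,inner_degree_1,bad_edge} can be violated. Two facts make this viable. First, dom-respecting is transitive: if $H_2$ dom-respects $H_1$ and $H_1$ dom-respects $H$, then using $I(H_1)=I(H)$ from \cref{same_inner_vertices} one checks \cref{boundary_subset,same_inner_vertices,same_neighbourhood} for $H_2$ over $H$ directly, so it suffices that each single reduction dom-respects its input. Second, every reduction I use is a surgery confined to the boundary: it deletes boundary vertices or boundary edges, or performs a diagonal flip that removes an edge and inserts an edge between two boundary vertices. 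Such surgeries never enlarge any $N_H(v)\cap I(H)$, so \cref{same_neighbourhood} holds automatically, and as long as no inner vertex is ever moved onto the outer face we keep $I(H')=I(H)$ and $B(H')\subseteq B(H)$, giving \cref{same_inner_vertices,boundary_subset}.

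Concretely, while $H$ is not dom-minimal I apply whichever of the following is available. (i) If some $v\in B(H)$ has $\deg^+_H(v)=0$ (violating \cref{bad_vertex}), delete $v$; since all neighbours of $v$ lie on $B(H)$, this merges the triangular faces around $v$ into the outer face without exposing an inner vertex, and the remaining inner faces stay triangles. (ii) If some outer edge $vw$ lies on no inner face with apex in $I(H)$ (violating \cref{bad_edge}) --- so either $vw$ is a bridge, or its inner face $vwy$ has $y\in B(H)$ --- delete $vw$; again only boundary vertices are involved, so no inner vertex is exposed, and in particular all bridges are removed, matching \cref{bridgeless}. Both rules strictly decrease $|V(H)|+|E(H)|$.

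The remaining work is \cref{inner_degree_1}, and the key observation is that once rules (i) and (ii) no longer apply, every $v\in B(H)$ with $\deg^+_H(v)=1$ already has degree exactly $3$. Indeed, let $x$ be the unique inner neighbour of $v$; by \cref{bad_edge} each of the two outer edges at $v$ lies on an inner face whose apex is an inner vertex, and that apex can only be $x$, which forces the neighbours of $v$ to be exactly $\{a,x,b\}$ with the triangles $vax$ and $vxb$ present. Hence $H[N_H[v]]\cong K_4$ precisely when $ab\in E(H)$. If $ab\notin E(H)$, I flip the quadrilateral $avbx$: delete $vx$ and insert $ab$. This keeps $x$ enclosed (now behind the triangle $axb$), leaves $x$ inner, drops $v$ to inner-degree $0$, and the next application of rule (i) deletes $v$. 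The flip is legal because $ab\notin E(H)$ precludes a multi-edge, and this combined ``flip-then-delete'' step removes a vertex, so it too decreases $|V(H)|+|E(H)|$.

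Since $|V(H)|+|E(H)|$ is a nonnegative integer that strictly decreases at each step, the process halts at a generalized near-triangulation $H'$ violating none of \cref{bad_vertex,inner_degree_1,bad_edge}, and by transitivity $H'$ dom-respects $H$. I expect the main obstacle to be structural bookkeeping rather than a single hard idea: I must verify that every surgery preserves the defining properties of a generalized near-triangulation and, crucially, never places an inner vertex on the outer face, so that \cref{same_inner_vertices} survives at each step. A second point needing care is that dom-minimality for generalized near-triangulations is defined block-by-block, whereas my reductions read \cref{bad_vertex,inner_degree_1,bad_edge} globally; I will need to confirm that the global conditions on the bridgeless graph $H'$ coincide with requiring each biconnected component to be a dom-minimal near-triangulation. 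This in turn means treating cut vertices --- where the neighbourhood of $v$ is a union of fans rather than a single fan, so the ``exactly two outer edges'' argument above must be applied within each fan --- carefully in rule (i) and in the flip of the \cref{inner_degree_1} step.
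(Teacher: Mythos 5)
Your reductions are the same three moves the paper uses (delete a boundary vertex of inner degree $0$; delete an outer edge whose inner face has no inner apex; at an inner-degree-$1$ boundary vertex $v$ with non-adjacent outer neighbours $a,b$, flip the quadrilateral $avbx$), and each is correctly shown to be dom-respecting and measure-decreasing, with the flip bundled with the subsequent deletion of $v$ so that $|V(H)|+|E(H)|$ strictly drops. The one place where the write-up has a genuine gap is the point you yourself flag at the end: you trigger (DM1) and (DM2) off the \emph{global} inner degree $\deg^+_H(v)$, whereas dom-minimality is defined block-by-block via $\deg^+_C(v)$, and the two readings do \emph{not} coincide. Concretely, take a block $C_1$ in which $v$ has a unique inner neighbour $x$ and two non-adjacent outer neighbours (say $C_1$ is a $5$-wheel with hub $x$ and $v$ on the rim), and attach at $v$ a second block $C_2$ giving $v$ another inner neighbour. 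Then $\deg^+_H(v)=2$, so your (DM2) rule never fires at $v$, and (DM3) holds at both outer edges of $v$ in $C_1$ since their inner apex is $x$; yet $C_1$ violates (DM2) because $H[N_{C_1}[v]]\not\cong K_4$. Your loop can therefore halt at a graph that is not dom-minimal, so the lemma as stated is not established --- and the per-block conditions are exactly what the downstream lemmata (\cref{chord_incident,degree_2_outer_neighbour}) consume.

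The fix is the one the paper adopts and that your closing remark gestures at: after the global deletion of inner-degree-$0$ boundary vertices, test (DM1)--(DM3) on each biconnected component $C$ separately using $\deg^+_C$, and perform the flip inside the fan of $v$ within $C$. (A per-block (DM1) violation at a cut vertex reduces to a (DM3) edge deletion, as you can check, so only the (DM2) rule actually needs the per-block restatement.) With that change your argument is the paper's proof, phrased as a terminating rewriting process rather than an induction.
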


\begin{proof}
  The proof is by induction on $|V(H)|+|E(H)|$.  If $H$ is already dom-minimal, then setting $H'=H$ satisfies the requirements of the lemma, so assume that $H$ is not dom-minimal.  Since \cref{boundary_subset,same_inner_vertices,same_neighbourhood} are transitive relations, the dom-respecting relation is transitive: If $H'$ dom-respects $H^*$ and $H^*$ dom-respects $H$, then $H'$ dom-respects $H$.  Therefore, it is sufficient to find $H^*$ with fewer edges or fewer vertices than $H$ that dom-respects $H$, and the inductive hypothesis provides the desired dom-minimal graph $H'$ that dom-respects $H^*$ and $H$.

  If $H$ contains a vertex $v\in B(H)$ with $\deg^+_H(v)=0$ then $H-v$ is a generalized near-triangulation, $B(H-v)\subset B(H)$, $I(H-v)=I(H)$, and $N_{H-v}(v)\cap I(H)=N_{H}(v)\cap I(H)$ for all $v\in V(H-v)$. Therefore $H-v$ dom-respects $H$ and has fewer vertices than $H$ so we can apply the inductive hypothesis and be done.  We now assume that $\deg^+_H(v)\ge 1$ for all $v\in B(H)$.  Since $H$ is not dom-minimal then $H$ contains a biconnected component $C$ that is not dom-minimal. (See \cref{minimal_fig}.)
  \begin{figure}
    \centering
    \begin{tabular}{ccc}
      \includegraphics[page=1]{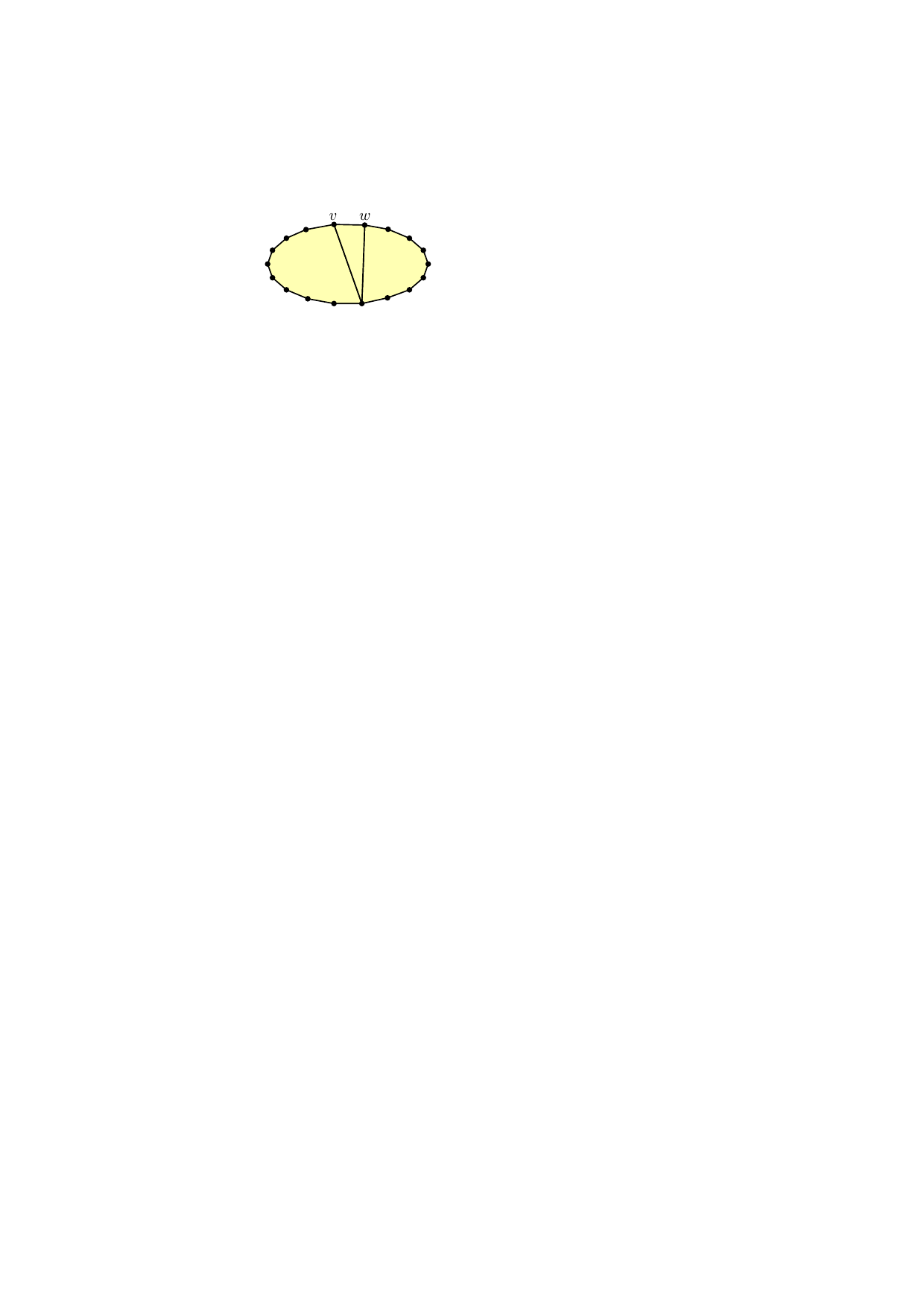} &
      \includegraphics[page=3]{figs/minimal} &
      \includegraphics[page=5]{figs/minimal} \\
      $\Downarrow$ & $\Downarrow$ & $\Downarrow$ \\
      \includegraphics[page=2]{figs/minimal} &
      \includegraphics[page=4]{figs/minimal} &
      \includegraphics[page=6]{figs/minimal}
    \end{tabular}
    \caption{Three cases on the way to making $H$ dom-minimal.}
    \label{minimal_fig}
  \end{figure}
  \begin{compactenum}
    \item \cref{bad_edge}: If there exists an edge $vw$ on the outer face of $C$ that is not incident to any inner face $vwx$ with $x\in I(C)$ then $H-vw$ is a generalized near-triangulation, $B(H-vw)=B(H)$, and $I(H-vw)=I(H)$, and $N_{H-vw}(v)\cap I(H)=N_{H}(v)\cap I(H)$ for all $v\in V(H-vw)$. Therefore, $H-vw$ dom-respects $H$ and has few edges than $H$. (This includes the case where $C$ consists of the single edge $vw$.)

    \item \cref{bad_vertex}: If there exists a vertex $v\in B(C)$ with $\deg^+_C(v)=0$ then $v$ is incident to an edge $vw$ that is on the outer face of $C$ and on the outer face of $H$. Since $\deg^+_C(v)=0$, $vw$ is not incident to any inner face $vwx$ with $x\in I(C)$ and we can proceed as in the previous case.

    \item \cref{inner_degree_1}: If there exists a vertex $v\in B(C)$ with $\deg^+_C(v)=1$ then $H$ contains faces $xvw$ and $vyw$ where $w$ is an inner vertex.  If Case~1 does not apply to either of the two edges on the outer face of $C$ incident to $v$ then $x$ and $y$ are on the outer face of $C$. If $H[N_C[v]]$ is not isomorphic to $K_4$, then $xy\not\in E(H)$.  In this case, let $H^\star$ be the graph obtained from $H$ by removing the edge $vw$ and replacing the edges $xv$ and $vy$ with the edge $xy$. Then $H^\star$ is a generalized near-triangulation, $B(H^*)=B(H)$, $I(H^*)=I(H)$, and $N_{H^*}(v)\cap I(H)\subseteq N_{H}(v)\cap I(H)$.  Therefore $H^\star$ dom-respects $H$ and has fewer edges than $H$. \qedhere

  \end{compactenum}
\end{proof}

\subsection{Finding a $2$--$3$ Combo}
\label{combo}

Next we show that, in most cases our algorithm for constructing a connected dominating set is not forced to choose a single vertex of inner-degree $2$. Instead, it can choose a pair $v,w$ such that $\deg^+_H(v)=2$ and $\deg^+_{H-v}(w)\ge 3$. Note that the next two lemmas each consider a graph $H$ that is a near triangulation, not a generalized near-triangulation.

\begin{lem}\label{chord_incident}
  let $H$ be a dom-minimal near-triangulation and let $v_0$ be a vertex in $B(H)$ with $|N_H(v_0)\cap B(H)|\ge 3$.  Then $\deg^+_H(v_0)\ge 2$.  In other words, if $v_0$ is incident to a chord of the outerplane graph $H[B(H)]$, then $v_0$ is incident to at least two inner vertices of $H$.
\end{lem}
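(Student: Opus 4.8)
The plan is to argue directly from the dom-minimality hypotheses, eliminating the possibilities $\deg^+_H(v_0)=0$ and $\deg^+_H(v_0)=1$ so that only $\deg^+_H(v_0)\ge 2$ remains. First I would record what the hypothesis $|N_H(v_0)\cap B(H)|\ge 3$ actually says. Since $H$ is a near-triangulation, its outer face is bounded by a cycle, so the boundary vertex $v_0$ has exactly two neighbours along that cycle. The assumption $|N_H(v_0)\cap B(H)|\ge 3$ is therefore equivalent to saying that $v_0$ is the endpoint of at least one chord of the outerplane graph $H[B(H)]$; this is the reformulation promised in the statement, and it is what the count of three boundary neighbours must eventually contradict.

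The argument then splits on the value of $\deg^+_H(v_0)$. The case $\deg^+_H(v_0)=0$ is excluded immediately by \cref{bad_vertex}, which requires every boundary vertex to have at least one inner neighbour. The substantive case is $\deg^+_H(v_0)=1$, and here I would invoke \cref{inner_degree_1}: since $\deg^+_H(v_0)=1$, the closed neighbourhood $H[N_H[v_0]]$ is isomorphic to $K_4$. Because $K_4$ has exactly four vertices, this forces $\deg_H(v_0)=3$, so $v_0$ has precisely three neighbours and they are mutually adjacent. Exactly one of these three is an inner vertex (as $\deg^+_H(v_0)=1$), leaving exactly two boundary neighbours. Hence $|N_H(v_0)\cap B(H)|=2$, contradicting the hypothesis $|N_H(v_0)\cap B(H)|\ge 3$.

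Having ruled out both $\deg^+_H(v_0)=0$ and $\deg^+_H(v_0)=1$, the only remaining possibility is $\deg^+_H(v_0)\ge 2$, which is exactly the conclusion. The one point requiring care is the translation of the $K_4$ condition into a degree count: the key observation is that $K_4$ on $N_H[v_0]$ leaves no room for any neighbour of $v_0$ outside $N_H[v_0]$, so the single guaranteed inner neighbour completely determines the number of boundary neighbours. I do not expect a genuine obstacle here; the lemma is a short structural consequence of \cref{bad_vertex,inner_degree_1}, and the chord reformulation is purely cosmetic.
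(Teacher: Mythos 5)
Your proof is correct, but it takes a genuinely different route from the paper's. The paper uses only \cref{bad_edge}: the two boundary edges $v_0v_1$ and $v_{k-1}v_0$ must each lie on inner faces $v_0v_1x$ and $v_{k-1}v_0y$ with $x,y\in I(H)$, and the chord at $v_0$ separates $H[B(H)]$ into faces so that $x$ and $y$ lie in distinct faces, forcing $x\neq y$ and hence $\deg^+_H(v_0)\ge 2$. You instead dispatch the two bad cases directly: \cref{bad_vertex} kills $\deg^+_H(v_0)=0$, and \cref{inner_degree_1} kills $\deg^+_H(v_0)=1$ by pure counting, since $H[N_H[v_0]]\cong K_4$ forces $\deg_H(v_0)=3$ and hence only two boundary neighbours, contradicting $|N_H(v_0)\cap B(H)|\ge 3$. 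Your key step is sound: the closed neighbourhood being a $K_4$ leaves no room for further neighbours, and since $V(H)=B(H)\uplus I(H)$ the two non-inner neighbours must be boundary vertices. What each approach buys: yours is shorter, entirely combinatorial, and avoids any planarity/separation reasoning; the paper's uses only \cref{bad_edge} (so it would survive weakening the dom-minimality hypotheses to that single condition) and yields the slightly stronger geometric fact that $v_0$ has inner neighbours on both sides of the chord. Both are valid under the lemma's stated hypotheses.
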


\begin{figure}[htbp]
  \centering
  \includegraphics{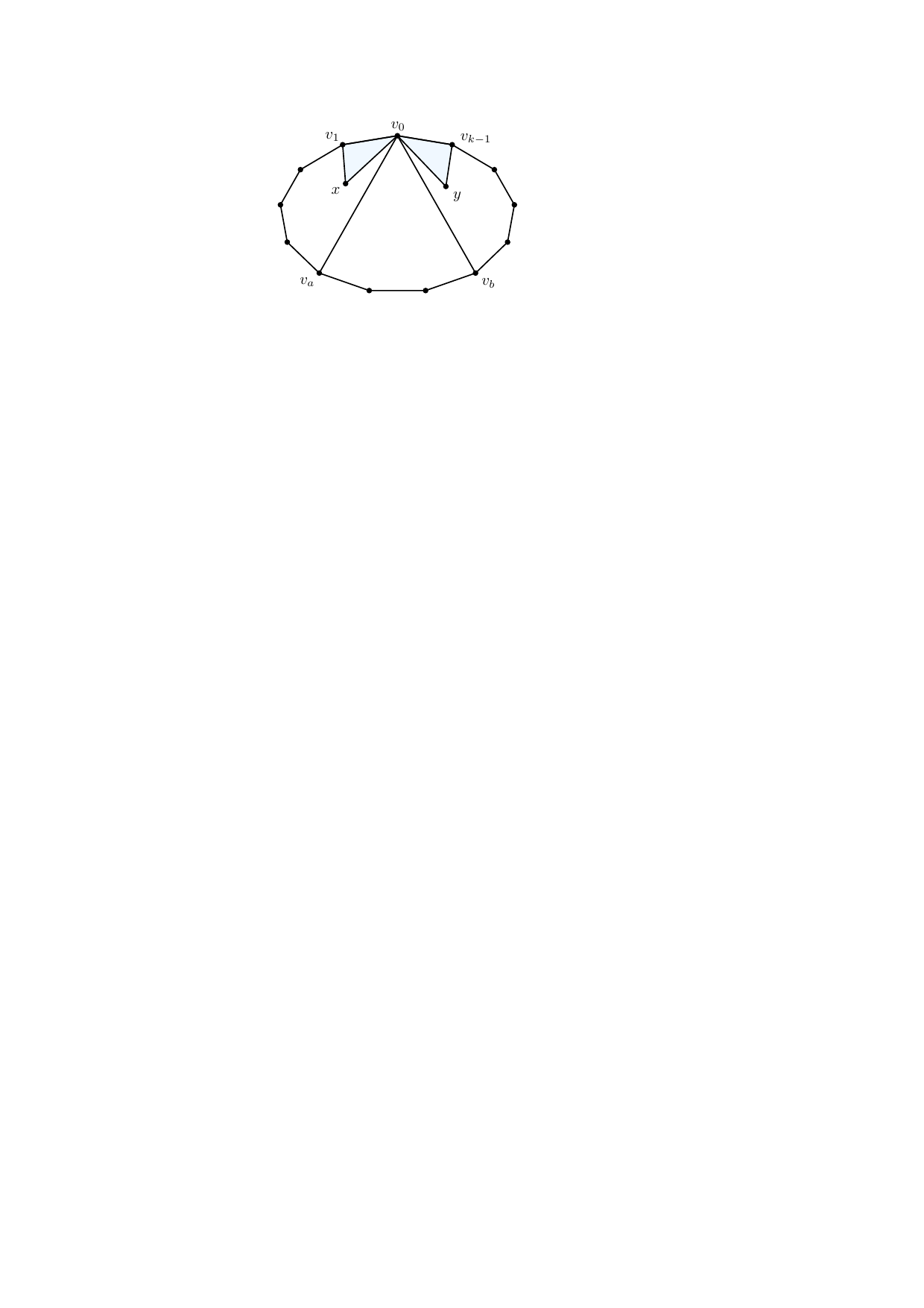}
  \caption{The proof of \cref{chord_incident}}
  \label{chord_incident_fig}
\end{figure}
\begin{proof}
  Refer to \cref{chord_incident_fig}
  Since $H$ is a near-triangulation its outer face is bounded by a cycle $v_0,\ldots,v_{k-1}$.  Let $a:=\min\{i\in\{2,\ldots,k-2\}:v_0v_i\in E(H)\}$ and $b:=\max\{i\in\{2,\ldots,k-2\}:v_0v_i\in E(H)\}$. (Possibly $a=b$, but both $a$ and $b$ are well-defined since $|N^+_H(v_0)|\ge 3$.)   Since $H$ is dom-minimal, the edge $v_0v_1$ is on the boundary of an inner face $v_0v_1x$ of $H$ where $x$ is an inner vertex of $H$, by \cref{bad_edge}.  Since $H$ is dom-minimal, the edge $v_{k-1}v_0$ is on the boundary of an inner face $v_{k-1}v_0y$ of $H$ where $y$ is an inner vertex of $H$, by \cref{bad_edge}.  Then $x$ is in the interior of the face of $H[B(H)]$ bounded by the cycle $v_0,v_1,\ldots,v_a$ and $y$ is in the interior of the face of $H[B(H)]$ bounded by the cycle $v_0,v_b,\ldots,v_{k-1}$.  Therefore, $x\neq y$ and $N^+_H(v_0)\supseteq\{x,y\}$ so $\deg^+_H(v_0)\ge 2$.
\end{proof}

\begin{lem}\label{degree_2_outer_neighbour}
  Let $H$ be a dom-minimal near-triangulation. Then either:
  \begin{compactenum}
    \item $H$ is isomorphic to $K_4$;
    \item each vertex $w\in B(H-B(H))$ has a neighbour $v$ in $B(H)$ with $\deg^+_H(v)\ge 2$.
  \end{compactenum}
\end{lem}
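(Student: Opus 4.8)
The plan is to prove the dichotomy directly: assume conclusion (2) fails, so there is a vertex $w\in B(H-B(H))$ all of whose neighbours in $B(H)$ have inner-degree exactly $1$, and show this forces $H\cong K_4$. Write $H':=H-B(H)=H[I(H)]$, so $w$ is an inner vertex of $H$ lying on the outer face of $H'$. The whole argument is a local analysis around $w$, leveraging \cref{bad_vertex} and \cref{inner_degree_1}.

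First I would record a local fact: every $w\in B(H')$ has at least one neighbour in $B(H)$. Since $w\in I(H)$, it is surrounded in $H$ by a ring of triangular faces whose third vertices form the link cycle of $w$. If all of these link vertices were inner vertices of $H$, then all the surrounding triangles would also be faces of $H'$, enclosing $w$ inside a bounded triangulated disc and placing $w$ in $I(H')$; hence $w\in B(H')$ indeed has a neighbour $v\in B(H)$. By \cref{bad_vertex}, $\deg^+_H(v)\ge 1$, and by the failure hypothesis $\deg^+_H(v)=1$.

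Next I would invoke dom-minimality to expose a $K_4$. Since $vw$ is an edge with $w$ inner and $\deg^+_H(v)=1$, $w$ is the unique inner neighbour of $v$, so by \cref{inner_degree_1} the closed neighbourhood $H[N_H[v]]$ is isomorphic to $K_4$ on a vertex set $\{v,w,a,b\}$ with $a,b\in B(H)$ (the two non-inner neighbours of $v$). Now $a$ and $b$ are boundary neighbours of $w$ (the $K_4$ gives $wa,wb\in E(H)$), so by the failure hypothesis $\deg^+_H(a)=\deg^+_H(b)=1$ as well; applying \cref{inner_degree_1} to each and using that each is already adjacent to the other three vertices of the $K_4$, I get $N_H[a]=N_H[b]=\{v,w,a,b\}$. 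Thus $v,a,b$ all have their neighbourhoods confined to $\{v,w,a,b\}$.

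Finally I would close up the picture around $w$ using planarity. Because $v$ is a boundary vertex with $a,b$ as its two cycle-neighbours and $w$ its single inner neighbour, the two inner faces at $v$ are $vaw$ and $vwb$. The edge $wa$ bounds a second face $waz$; since $a$'s only neighbours are $v,w,b$ and the face $vaw$ already uses $v$, we get $z=b$, so $wab$ is a face (symmetrically from $wb$). The three faces $vaw$, $vwb$, $wab$ pairwise share the edges $vw$, $wa$, $wb$ and therefore close up around $w$, forcing the link of $w$ to be exactly the triangle $vab$. Hence $\deg_H(w)=3$ with neighbours $v,a,b$, so $\{v,w,a,b\}$ is a connected component; as $H$ is connected this gives $V(H)=\{v,w,a,b\}$ and $H\cong K_4$. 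The step I expect to be the main obstacle is this final closing-up: one must argue carefully, using that \emph{all} of $v,a,b$ have neighbourhoods confined to the $K_4$, that the three forced triangles already account for the entire rotation at $w$ so that no additional vertices can be hidden around it.
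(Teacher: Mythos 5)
Your proof is correct, and while it starts from the same pivot as the paper---apply \cref{inner_degree_1} to a boundary neighbour $v$ of $w$ of inner degree $1$ to expose the $K_4$ on $N_H[v]$---it finishes along a genuinely different route. The paper argues directly: it splits on whether the edge $xy$ (your $ab$) is a chord of $H[B(H)]$ or lies on the outer face, handling the first case with \cref{chord_incident} and the second with \cref{useful_little_guy} (any further inner vertices are trapped inside the triangle $xyw$, forcing $\deg^+_H(x),\deg^+_H(y)\ge 2$). You instead argue contrapositively and observe that $a$ and $b$ are themselves boundary neighbours of $w$, so the failure hypothesis together with \cref{bad_vertex} and \cref{inner_degree_1} pins down $N_H(a)=N_H(b)=\{v,w,a,b\}\setminus\{a\}$ resp.\ $\setminus\{b\}$; the rotation at $w$ then closes up into the three faces $vaw$, $vwb$, $wab$, and connectivity of the near-triangulation finishes the job. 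The step you flagged as delicate is sound: each of the three known edges at $w$ lies on two of those three triangular faces, so they account for the entire cyclic face sequence at $w$ and $\deg_H(w)=3$. Your version buys independence from \cref{chord_incident} and \cref{useful_little_guy} (at the cost of two extra applications of \cref{inner_degree_1}), and it also supplies a justification the paper omits, namely that a vertex of $B(H-B(H))$ must have at least one neighbour in $B(H)$.
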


\begin{proof}
  If $I(H)=\emptyset$ then the second condition of the lemma is trivially satisified, so there is nothing to prove. Otherwise, let $w$ be any vertex in $B(H-B(H))$ and let $vw$ be an edge of $H$ with $v\in B(H)$.

   \begin{figure}[htbp]
     \centering
     \begin{tabular}{ccc}
       \includegraphics[page=1]{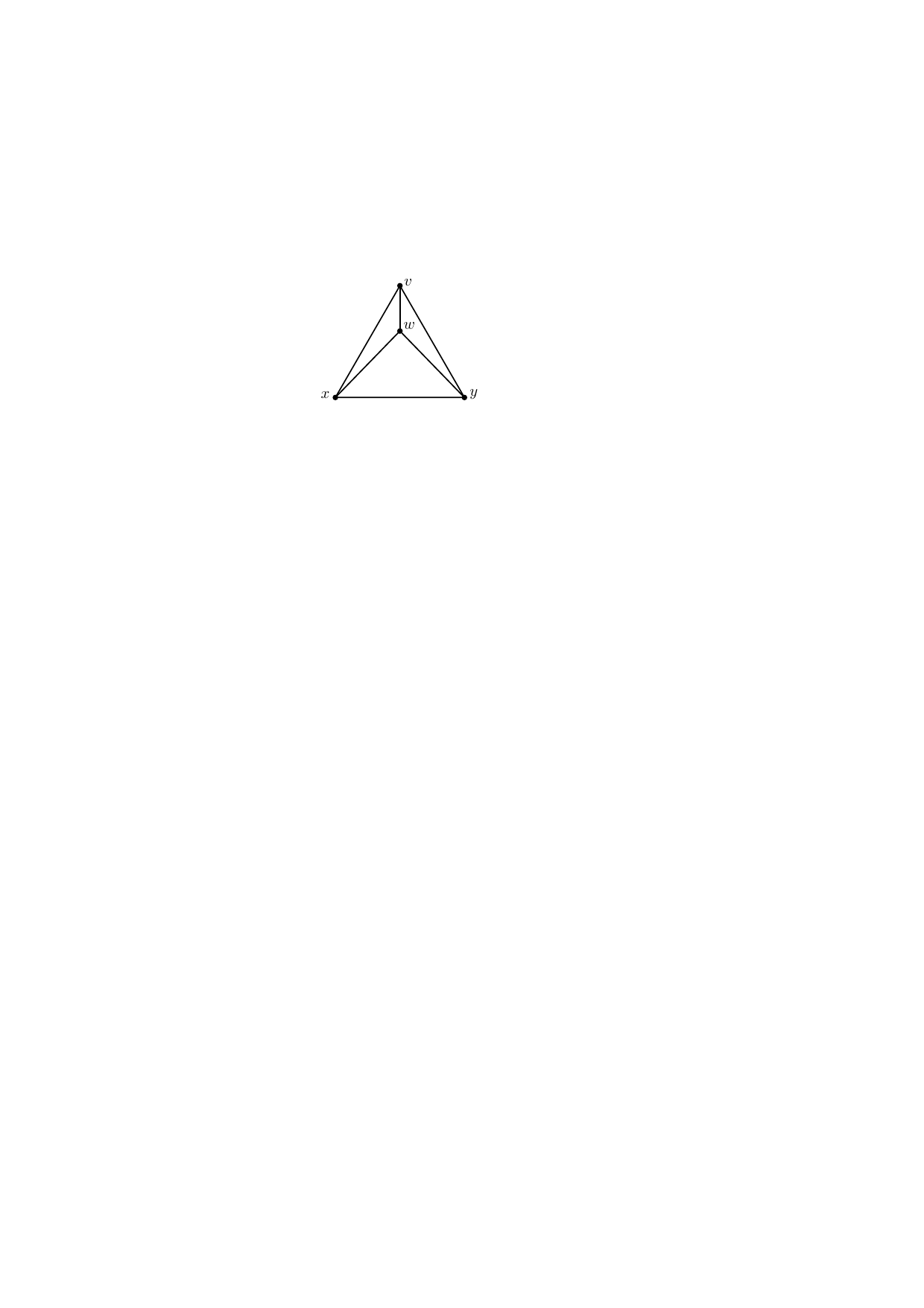} &
       \includegraphics[page=2]{figs/outer_2_2} &
       \includegraphics[page=3]{figs/outer_2_2}
     \end{tabular}
     \caption{The proof of \cref{degree_2_outer_neighbour}.}
     \label{degree_2_outer_neighbour_fig}
   \end{figure}

   Refer to \cref{degree_2_outer_neighbour_fig}.
   By \cref{inner_degree_1}, $\deg^+_H(v)\ge 2$ or $H[N_H[v_0]]$ is isomorphic to $K_4$. In the former case the vertex $w$ satisfies the second condition of the lemma.  In the latter case, let $x$ and $y$ be the two neighbours of $v$ on the outer face of $H$, so $H[\{v,w,x,y\}]$ is isomorphic to $K_4$.    If the edge $xy$ is not on the outer face of $H$ then, by \cref{chord_incident}, $\deg^+_H(x),\deg^+_H(y)\ge 2$ so $w$ satisfies the second condition and we are done.  Otherwise, if $I(H)=\{w\}$ then $V(H)=\{v,w,x,y\}$ and $H$ is isomorphic to $K_4$ and we are done.  Otherwise $I(H)$ contains at least one vertex $w'\neq w$.  Since $\deg^+_H(v)=1$, the cycle $vxwy$ has no vertices of $H$ in its interior (by \cref{useful_little_guy}), so $I(H)$ contains vertices in the interior of $xyw$.  But then \cref{useful_little_guy} implies that $\deg^+_H(x),\deg^+_H(y)\ge 2$.
\end{proof}

Note that the next three lemmas consider the case where $H$ is a generalized near triangulation.  The following lemma is illustrated in \cref{2_3_figure}.

\begin{figure}
  \centering
  \includegraphics[page=1]{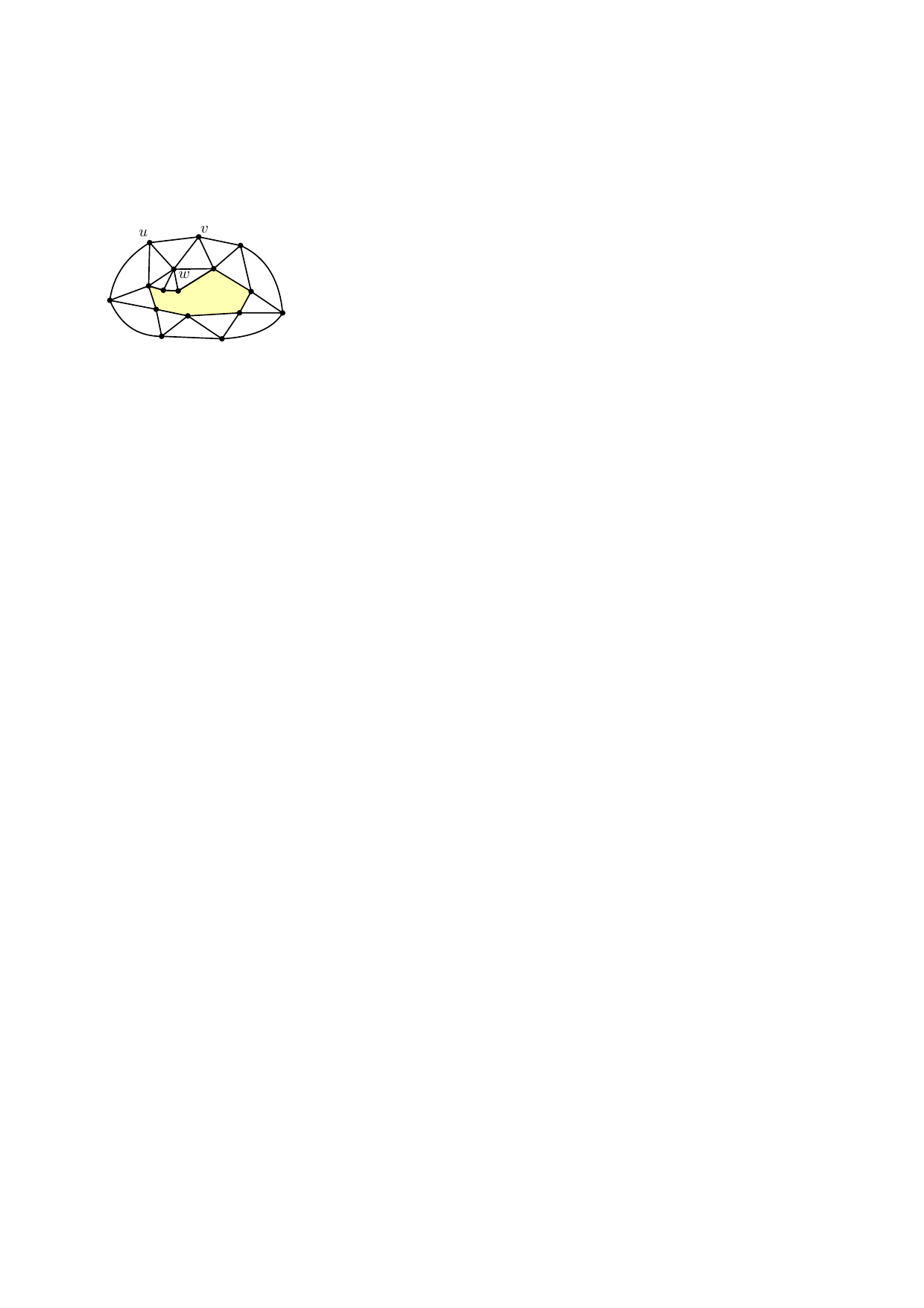}
  \includegraphics[page=2]{figs/2_3_figure}
  \includegraphics[page=3]{figs/2_3_figure}
  \caption{Removing an inner-degree $2$ vertex $v$ is immediately followed by removing an inner-degree $3$ vertex $w$.}
  \label{2_3_figure}
\end{figure}

\begin{lem}\label{really_good}
  Let $H$ be a dom-minimal generalized near-triangulation.  Then either:
  \begin{compactenum}[(1)]
    \item $H-B(H)$ is critical; \label[p]{two_critical}
    \item $B(H)$ contains a vertex $v$ with $\deg^+_H(v)\ge 3$; or \label[p]{degree_three}
    \item $H$ contains distinct vertices $v_0$, $v_j$, and $w$ such that
    \begin{compactenum}[(a)]
      \item $v_0\in B(H)$ and $\deg^+_H(v_0)=2$;
      \item $w\in B(H-v_0)$ and $\deg^+_{H-v_0}(w)\ge 3$; and
      \item $v_j\in B(H)$ and $N^+_H(v_j) \subseteq N_H[w] $.
    \end{compactenum}
    \label[p]{two_three_pair}
  \end{compactenum}
\end{lem}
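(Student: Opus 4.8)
The plan is to assume that neither \cref{two_critical} nor \cref{degree_three} holds and then to construct the triple $v_0,v_j,w$ demanded by \cref{two_three_pair}. So suppose $H-B(H)$ is not critical and that every vertex of $B(H)$ has inner degree at most $2$ in $H$. Since $H-B(H)$ is not critical, there is a vertex $w\in B(H-B(H))$ with $\deg^+_{H-B(H)}(w)\ge 2$; that is, $w$ is an inner vertex of $H$ that, after the deletion of $B(H)$, still has at least two neighbours in $I(H-B(H))$. I will call these the \emph{deep} neighbours of $w$, and $w$ will serve as the high-degree vertex of the combo.

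The first thing I would prove is the key inequality $\deg^+_H(w)\ge 4$. The idea is that a deep neighbour of $w$ cannot be an inner neighbour that borders the region vacated by $B(H)$. Working in the rotation around $w$, the neighbours of $w$ lying in $B(H)$ split the inner neighbours of $w$ into maximal arcs, and each such arc is flanked by vertices of $B(H)$; the inner neighbour at each end of an arc is therefore joined by a triangular face to a vertex of $B(H)$ and so lies on $B(H-B(H))$, i.e.\ is not deep. Since $w$ has at least one neighbour in $B(H)$ (it lies on $B(H-B(H))$), at least two of its inner neighbours are non-deep, so the number of deep neighbours is at most $\deg^+_H(w)-2$. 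As $w$ has at least two deep neighbours, this gives $\deg^+_H(w)\ge 4$.

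Next I would produce $v_0$ and verify (a) and (b). Applying \cref{degree_2_outer_neighbour} to the biconnected component of $H$ carrying the local structure around $w$ (which is a dom-minimal near-triangulation, since $H$ is dom-minimal and, by \cref{bridgeless}, bridgeless) shows that $w$ has a neighbour $v_0\in B(H)$ with $\deg^+_H(v_0)\ge 2$; the $K_4$ alternative of \cref{degree_2_outer_neighbour} cannot occur because $\deg^+_H(w)\ge 4$. Since \cref{degree_three} fails, $\deg^+_H(v_0)=2$, which is (a). Because $w\in I(H)$ is adjacent to $v_0$, deleting $v_0$ exposes $w$, so $w\in B(H-v_0)$. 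Deleting the single boundary vertex $v_0$ turns inner vertices into boundary vertices only among the (exactly two) inner neighbours of $v_0$, and the only inner neighbour of $w$ that can be affected is the second inner neighbour of $v_0$. Hence $\deg^+_{H-v_0}(w)\ge\deg^+_H(w)-1\ge 3$, giving (b).

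Finally, for (c) I would take $v_j$ to be the third vertex of one of the two faces incident to the edge $v_0w$; at least one of these two third vertices lies in $B(H)$ (the two inner neighbours of $v_0$ are $w$ and one other vertex, so $w$ cannot be flanked in $v_0$'s rotation by two inner vertices), and I let $v_j$ be such a vertex. Then $v_j\ne v_0,w$, and the fan of inner neighbours of $v_j$ begins at $w$; since $\deg^+_H(v_j)\le 2$, this fan consists of $w$ together with at most one further vertex that is consecutive with $w$ in the fan and hence adjacent to $w$, so $N^+_H(v_j)\subseteq N_H[w]$, which is (c). I expect the main obstacle to be precisely this last step: the fan of $v_j$ is a single arc only when $v_j$ is incident to no chord of $H[B(H)]$, and when $v_j$ carries a chord, \cref{chord_incident} forces $\deg^+_H(v_j)=2$ but the second inner neighbour of $v_j$ may be separated from $w$ by that chord and need not lie in $N_H[w]$. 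The hard part will be to rule this out, presumably by choosing the pair $(v_0,w)$ extremally (for instance so that the region cut off by such a chord is innermost, letting the chord endpoint, which is itself a degree-$2$ boundary neighbour of $w$ by \cref{chord_incident}, take over the role of $v_0$), together with the bookkeeping needed to transport the argument, stated for a single near-triangulation, across the biconnected components of the generalized near-triangulation $H$.
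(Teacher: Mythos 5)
Your overall skeleton matches the paper's: assume \cref{two_critical} and \cref{degree_three} both fail, pick $w\in B(H-B(H))$ with two ``deep'' neighbours, and extract $v_0$ from \cref{degree_2_outer_neighbour}. Your route to conclusion (b) is genuinely different from the paper's and, as far as I can check, correct: the inequality $\deg^+_H(w)\ge 4$ (two deep neighbours, plus at least two inner neighbours of $w$ that share a triangular face with a vertex of $B(H)$ and therefore lie on $B(H-B(H))$), combined with the observation that deleting $v_0$ moves only $N^+_H(v_0)$ --- hence at most one neighbour of $w$ other than $w$ itself --- from $I(H)$ to the boundary, gives $\deg^+_{H-v_0}(w)\ge\deg^+_H(w)-1\ge 3$ directly. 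The paper instead obtains (b) as a byproduct of its construction for (c), by exhibiting a specific vertex $z$ with $N^+_{H-v_0}(w)\supseteq N^+_{H-B(H)}(w)\uplus\{z\}$, which requires a separate argument that $z$ is not adjacent to $v_0$. Your version is cleaner and decouples (b) from (c).

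However, part (c) is not proved, and the obstacle you flag at the end is a genuine gap, not a technicality. Taking $v_j$ to be the $B(H)$-vertex of a face on the edge $v_0w$ fails exactly when $v_j$ is incident to a chord of $H[B(H)]$: its second inner neighbour then lies in a different face of $H[B(H)]$ and need not be adjacent to $w$, so $N^+_H(v_j)\not\subseteq N_H[w]$. Your proposed repair (an extremal choice of $(v_0,w)$, passing the role of $v_0$ to the chord endpoint) is only a sketch; it is not specified what quantity is minimized or why the process terminates, since the replacement vertex may again be flanked by a chord. The paper avoids the issue with a different choice of $v_j$: let $f$ be the face of $H[B(H)]$ containing $w$, with boundary cycle $v_0,v_1,\ldots,v_{k-1}$ labelled so that $v_0v_1w$ is an inner face, and let $j$ be maximal with $v_{a-1}v_aw$ a face for all $a\in\{1,\ldots,j\}$. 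At this fan endpoint, $v_j$ has inner neighbours $w$ and $z$ \emph{both inside $f$}; since $\deg^+_H(v_j)\le 2$ these exhaust $N^+_H(v_j)$, so $v_j$ cannot also have an inner neighbour across a chord, and the triangles at $v_j$ inside $f$ must be $v_{j-1}v_jw$, $wv_jz$, $zv_jv_{j+1}$, whence $wz\in E(H)$ and $N^+_H(v_j)=\{w,z\}\subseteq N_H[w]$. If you keep your version of (b), you still need to import this fan-walk (or an equivalent device) to produce a valid $v_j$ for (c).
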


\begin{proof}
  We will assume that $H$ does not satisfy \cref{two_critical} or \cref{degree_three} and show that $H$ must satisfy \cref{two_three_pair}.  Since $H-B(H)$ is not critical, $B(H-B(H))$ contains a vertex $w$ with $\deg_{H-B(H)}(w)\ge 2$.

  Let $C$ be the biconnected component of $H$ that contains $w$.  Then $C$ is a near-triangulation and we can apply \cref{degree_2_outer_neighbour} to $C$ and $w$. The first alternative in \cref{degree_2_outer_neighbour} is incompatible with the assumption that $\deg^+_{H-B(H)}(w)\ge 2$.  Therefore, we conclude that $N_H(w)\cap B(H)$ contains a vertex $v_0$ with $\deg^+_H(v_0)\ge 2$.  Since $H$ does not satisfy \cref{degree_three}, $\deg^+_H(v_0)< 3$, so $\deg^+_H(v_0)=2$.  Refer to \cref{really_good_fig}
  \begin{figure}
    \centering
    \begin{tabular}{ccc}
      \includegraphics[page=1]{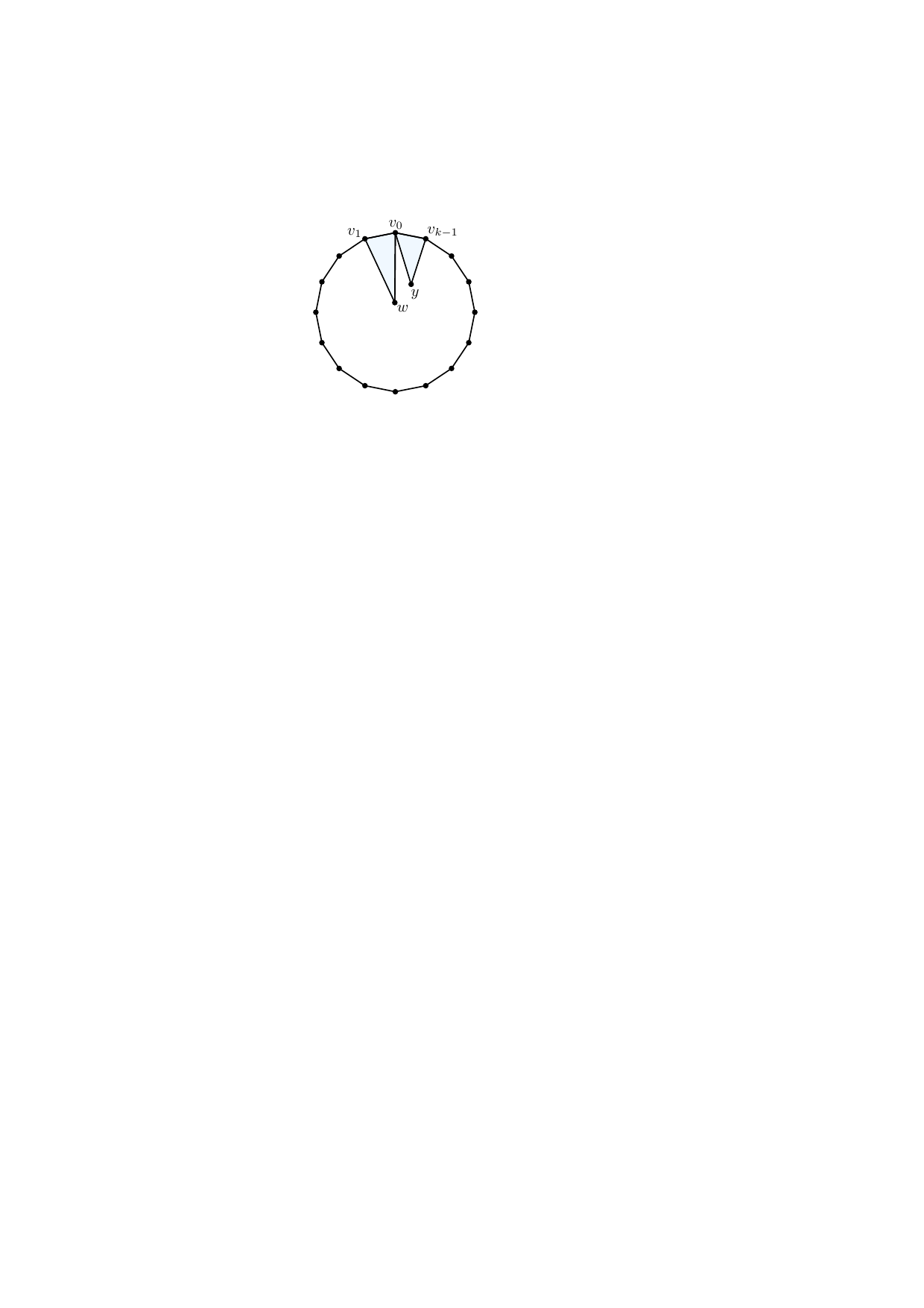} &
      \includegraphics[page=2]{figs/really_good} &
      \includegraphics[page=3]{figs/really_good}
    \end{tabular}
    \caption{The proof of \cref{really_good}.
    }
    \label{really_good_fig}
  \end{figure}

  Let $v_0,\ldots,v_{k-1}$ be the cycle that bounds the inner face $f$ of $H[B(H)]$ that contains $w$ in its interior. In the remainder of this proof, alls subscripts on $v$ are implicitly modulo $k$.  Since $H$ is a near-triangulation, $H$ contains triangles $v_0v_1x$ and $v_{k-1}v_0 y$ with $x$ and $y$ in the interior of or on the boundary of $f$.  Since $f$ is a face of $H[B(H)]$ each of $x$ and $y$ is in the interior of $f$.  At least one of $x$ or $y$ is equal to $w$, say $x$, since otherwise $\deg^+_H(v_0)\ge 3$.  Therefore $v_0v_1 w$ is an inner face of $H$.

  Let $j\ge 1$ be the maximum integer such that $v_{a-1}v_{a}w$ is an inner face of $H$ for all $a\in\{1,\ldots,j\}$.  Note that $j\le k-1$ since, otherwise, the component of $H-B(H)$ that contains $w$ contains only a single vertex, contradicting the fact that $\deg^+_{H-B(H)}(w)\ge 2$.

  Since $H$ is a near-triangulation and $f$ is a face of $H[B(H)]$, $H$ has some face $v_j v_{j+1} z$ with $z$ in the interior of $f$.  By the definition of $j$, $z\neq w$.  Therefore, $N_H^+(v_j)\supseteq \{w,z\}$ and, since $\deg^+_H(v_j)\le 2$, $N_H^+(v_j))= \{w,z\}$.
  Since $f$ is a face of $H[B(H)]$, the only neighbours of $v_j$ in $B(H)$ are $v_{j-1}$ and $v_{j+1}$. Since $H$ is a near-triangulation and $\deg^+_H(v_j)\le 2$, this implies that $w v_j z$ is a face of $H$.  In particular $wz$ is an edge of $H$.

  All that remains is to show that $\deg^+_{H-v_0}(w)\ge 3$.  First, observe that $z$ is in $B(H-B(H))$, so $z$ does not contribute to $\deg^+_{H-B(H)}(w)$. We claim that $z$ is in $I(H-v_0)$, so $z$ does contribute to $\deg^+_{H-v_0}(w)$.   Indeed, the only other possibility is that $z$ is adjacent to $v_0$.  In this case, consider the cycle $C:=v_0,\ldots,v_j,z$.  This cycle has $w$ in its interior. The vertices of $N^+_{H-B(H)}(w)$ must be in the interior of $C$. For each $a\in\{1,\ldots,j\}$, $v_{a-1}v_a w$ is a face of $H$ and $v_jwz$ is a face of $H$, so the cycle $D:=v_0,\ldots,v_j,z,w$ does not contain any vertices of $N^+_{H-B(H)}(w)$ in its interior.  Therefore, the vertices in $N^+_{H-B(H)}(w)$ must be in the interior of the cycle $\overline{D}:=v_0,w,z$.  By \cref{useful_little_guy}, $v_0$ is adjacent to some vertex in $I(H)\setminus\{w,z\}$. But this is not possible since it would imply that $\deg^+_H(v_0)\ge 3$.  Therefore $v_0$ is not adjacent to $z$, so $z$ is in the interior of $H-v_0$ and $N^+_{H-v_0}(w)\supseteq N^+_{H-B(H)}(w)\uplus\{z\}$, so $\deg^+_H(w)\ge 3$.
\end{proof}

The following is a restatement of \cref{really_good} in language that is more useful in the description of an algorithm for constructing a connected dominating set.

\begin{cor}\label{really_good_cor}
  Let $H$ be a dom-minimal generalized near-triangulation.  Then either:
  \begin{compactenum}[(1)]
    \item $H-B(H)$ is critical;
    \item there is a vertex $v\in B(H)$ and a dom-respecting subgraph $H'$ of  $H-v$ with $|H'|\le |H|-1$ and $|B(H')|\ge |B(H)|+2$; or
    \item there is an edge $vw\in E(H)$ with $v\in B(H)$, $w\in B(H-B(H))$, and a dom-respecting subgraph $H'$ of $H-\{v,w\}$ with $|H'|= |H|-3$ and $|B(H')|=|B(H)|+2$.
  \end{compactenum}
\end{cor}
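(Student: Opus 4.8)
The plan is to read the corollary off \cref{really_good} by translating each of its three alternatives into the corresponding alternative of the corollary. The first alternatives are literally the same, so no work is needed there. For the other two I would convert a statement about inner-degrees into the deletion of a small vertex set, verify that what remains dom-respects the deleted graph, and then count how the outer boundary grows. The only machinery I need beyond \cref{really_good} is that the dom-respecting relation is reflexive (so any generalized near-triangulation dom-respects itself) and that deleting a boundary vertex of inner-degree $0$ yields a dom-respecting subgraph, exactly the reduction used for \cref{bad_vertex} in the proof of \cref{dom_minimal}.

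For alternative \cref{degree_three} I would take $v$ to be the vertex with $\deg^+_H(v)\ge 3$ and set $H':=H-v$. Then $H'$ is again a generalized near-triangulation, it dom-respects itself, and $|H'|=|H|-1$. The substance is the boundary count: removing $v$ deletes one vertex of $B(H)$ but promotes all $\deg^+_H(v)\ge 3$ inner neighbours of $v$ onto the outer face, and none of these were previously on the boundary, so $|B(H')|\ge|B(H)|-1+3=|B(H)|+2$. This gives exactly the second alternative of the corollary. I must check that, even when $H$ is not $2$-connected, deleting $v$ genuinely exposes its inner neighbours and cannot send an existing boundary vertex into the interior; both hold because removing a vertex only enlarges the outer face.

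For alternative \cref{two_three_pair} I would set $v:=v_0$ and use the edge $v_0w$, which exists since $w\in N_H^+(v_0)$, and note that $v_0\in B(H)$ and $w\in B(H-B(H))$ as the corollary demands. The crucial observation is that the auxiliary vertex $v_j$ becomes deletable once $v_0$ and $w$ are removed: \cref{really_good} gives $N_H^+(v_j)=\{w,z\}$ with $wz\in E(H)$, so deleting $v_0$ exposes $w$ and then deleting $w$ exposes $z$, leaving $v_j$ with inner-degree $0$ in $H-\{v_0,w\}$. Hence the inner-degree-$0$ reduction lets me delete $v_j$ and set $H':=H-\{v_0,w,v_j\}$; it dom-respects $H-\{v_0,w\}$, and since $v_0,w,v_j$ are distinct, $|H'|=|H|-3$.

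The step I expect to be the main obstacle is pinning down the boundary count in this last case. Tracing the outer face through the three deletions, removing $v_0$ (inner-degree $2$) nets $+1$, removing $w$ nets $\deg^+_{H-v_0}(w)-1$, and removing the inner-degree-$0$ vertex $v_j$ nets $-1$, for a net change of $\deg^+_{H-v_0}(w)-1$. Because \cref{really_good} only promises $\deg^+_{H-v_0}(w)\ge 3$, this naturally yields $|B(H')|\ge|B(H)|+2$, with equality precisely when $\deg^+_{H-v_0}(w)=3$; reconciling this with the equality printed in the corollary, together with verifying that these fan-style boundary counts are unaffected by cut vertices in the generalized setting, is the delicate point I would have to nail down, noting that any surplus exposure only strengthens the downstream amortized analysis.
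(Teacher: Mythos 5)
Your proposal is correct and matches the paper's own (one-line) proof exactly: in case (2) it takes $H':=H-v$ for the vertex $v$ with $\deg^+_H(v)\ge 3$, and in case (3) it takes $H':=H-\{v_0,w,v_j\}$ for the triple supplied by \cref{really_good}, with the same boundary and vertex counts. The equality-versus-inequality issue you flag in case (3) is genuine---the face-merging count only yields $|B(H')|\ge|B(H)|+2$, with strict inequality possible when $\deg^+_{H-v_0}(w)>3$---but the paper's proof asserts equality without comment, so on this point you have been more careful than the source.
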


\begin{proof}
  In the second case, the graph $H':=H-v$ has $|H'|=|H|-1$ and $|B(H')|\ge |B(H)|+2$. In the third case, the graph $H':=H-\{v_0,w,v_j\}$ has $|H'|=|H|-3$ and $|B(H')|=|B(H)|+2$.
\end{proof}

\subsection{Eliminating Inner Leaves}
\label{one_kill_sec}

Next we show that, even when all vertices in $B(H)$ have inner-degree at most $2$ and $H-B(H)$ is critical, we can still efficiently dominate degree-$1$ vertices in $H-B(H)$.

\begin{lem}\label{leaf_killer}
  Let $H$ be a dom-minimal generalized near-triangulation such that $\deg^+_H(v)\le 2$ for all $v\in B(H)$, $H-B(H)$ is critical, and $H-B(H)$ contains a vertex $w$ with $\deg_{H-B(H)}(w)=1$.  Then there exists $v\in B(H)$ and a dom-respecting subgraph $H'$ of $H-v$ such that $|H'|\le |H|-3$ and $|B(H')|\le |B(H)|-1$.
\end{lem}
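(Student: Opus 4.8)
The plan is to exploit the very special structure forced by the hypotheses. Since $H-B(H)$ is critical and contains a degree-$1$ vertex $w$, \cref{critical_structure} tells us that $w$ lies in a marked face of $(H-B(H))[B(H-B(H))]$ together with a unique inner vertex to which it is joined, and $\deg_{H-B(H)}(w)=1$ means $w$ is a pendant attached to the outer structure of $H-B(H)$. The key observation is that $w$ is an inner vertex of $H$ (it lies in $I(H)\setminus B(H)$, but is on $B(H-B(H))$), so $w$ has neighbours in $B(H)$; and because $H$ is a near-triangulation, the edges of $H$ around $w$ triangulate its link. I would first extract from these facts a boundary vertex $v\in B(H)$ that is adjacent to $w$ and whose removal exposes a useful portion of the triangulation.

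**Choosing the right $v$.** The main work is to identify $v\in B(H)$ so that deleting $v$ simultaneously (i) drops the vertex count by at least three after passing to a dom-respecting subgraph and (ii) \emph{decreases} the boundary size by at least one. I would argue as follows: because $\deg_{H-B(H)}(w)=1$, the vertex $w$ has exactly one inner neighbour in $H-B(H)$, so in $H$ all of $w$'s other neighbours lie on $B(H)$. Using that $H$ is a near-triangulation and $\deg^+_H(u)\le 2$ for every $u\in B(H)$, the boundary neighbours of $w$ form a short path $v_{a},\dots,v_{b}$ on the outer cycle, and the faces incident to $w$ are triangles $v_{i}v_{i+1}w$. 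I would take $v$ to be an appropriate endpoint (or interior vertex) of this path. Deleting $v$ makes $w$ and at least one of its former boundary neighbours lose boundary support, and the low-inner-degree condition forces these exposed vertices either to be removable or to collapse, so that after applying \cref{dom_minimal} (to pass to a dom-respecting, dom-minimal $H'$ of $H-v$) the vertex count drops by at least $3$.

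**Controlling the boundary.** The delicate requirement is $|B(H')|\le |B(H)|-1$, i.e.\ removing $v$ must not create too many new boundary vertices. This is the opposite of the situation in \cref{really_good_cor}, where exposing new boundary vertices was desirable. Here the point is that $w$ has inner-degree $1$ in $H-B(H)$, so when $v$ is deleted, $w$ does not ``open up'' into a large new boundary face; instead the pendant structure around $w$ gets absorbed. I would verify, using \cref{useful_little_guy} applied to the triangle(s) bounding $w$'s neighbourhood, that the only vertices that could become boundary vertices are the former neighbours of $w$, and that the dom-minimizing step (which removes boundary vertices of inner-degree $0$ and contracts degree-$1$ configurations via \cref{inner_degree_1}) eliminates enough of them to achieve a net decrease of at least one in $|B(\cdot)|$.

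**Main obstacle.** I expect the hard part to be the boundary bookkeeping: carefully tracking which vertices move between $B$ and $I$ when $v$ is removed, and confirming that the reduction to a dom-minimal dom-respecting $H'$ both removes at least three vertices and strictly shrinks the boundary. The interplay is subtle because removing $v$ can a priori \emph{increase} the boundary (as in \cref{really_good_cor}), so the entire argument hinges on using $\deg_{H-B(H)}(w)=1$ to show that $w$'s unique inner neighbour shields the interior, preventing new boundary vertices from appearing while guaranteeing that $v$ together with $w$ (and possibly one more vertex) disappears. A clean case analysis on whether $w$'s boundary neighbours have inner-degree $1$ or $2$, combined with \cref{inner_degree_1} and \cref{useful_little_guy}, should close the argument.
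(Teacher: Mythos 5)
Your strategy coincides with the paper's --- delete one boundary neighbour $v$ of $w$, pass to a dom-minimal graph dom-respecting $H-v$ via \cref{dom_minimal}, and let the pendant structure around $w$ collapse --- but the proposal defers exactly the two steps that constitute the proof, and the choice you leave open is the one that matters. Write the link of $w$ in $H$ as $x=v_0,v_1,\ldots,v_{t-1},v_t=x$ with $v_1,\ldots,v_{t-1}\in B(H)$ and $t\ge 3$ (as $w$ is an inner vertex of a near-triangulation, it lies on at least three triangles). The working choice is $v:=v_1$, a boundary neighbour of $w$ that is \emph{also adjacent to $x$}: since $N^+_H(v_1)=\{x,w\}$, deleting $v_1$ places both $w$ and $x$ on $B(H-v_1)$, so every remaining neighbour of $w$ is a boundary vertex and $\deg^+_{H-v_1}(w)=0$. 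Then \cref{bad_vertex} alone forces $w\notin V(H')$ for any dom-minimal $H'$ dom-respecting $H-v_1$, and since $N^+_H(v_i)=\{w\}$ for $2\le i\le t-2$ and $N^+_H(v_{t-1})=\{x,w\}$, the same clause deletes $v_2,\ldots,v_{t-1}$ as well; no case analysis on \cref{inner_degree_1} and no appeal to \cref{useful_little_guy} is needed. Your parenthetical alternative of taking $v$ to be an interior vertex of the path would leave $x$ in $I(H-v)$, so $w$, $v_1$ and $v_{t-1}$ would retain inner degree $1$ rather than $0$, and the clean cascade breaks down.

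With that choice the arithmetic you postpone is immediate, and it shows that your hedge ``$v$ together with $w$ (and possibly one more vertex) disappears'' undersells the truth: \emph{all} of $v_1,\ldots,v_{t-1},w$ vanish, i.e.\ at least three vertices since $t\ge 3$, giving $|H'|\le|H|-t\le|H|-3$. For the boundary, $B(H-v_1)\subseteq(B(H)\setminus\{v_1\})\cup\{w,x\}$ and, by \cref{boundary_subset}, $B(H')\subseteq B(H-v_1)\setminus\{v_2,\ldots,v_{t-1},w\}$, whence $|B(H')|\le|B(H)|+2-t\le|B(H)|-1$. As written, your argument is a correct outline of the paper's proof with its two load-bearing verifications missing.
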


\begin{proof}
  Refer to \cref{killing_a_leaf}.
  Let $x$ be the unique neighbour of $w$ in $H-B(H)$. Since $x$ and $w$ are vertices of $H-B(H)$, $x,w\in I(H)$.  Since $w$ is an inner vertex in a near-triangulation, it is incident to $t\ge 3$ faces $v_iv_{i+1}w$ for $i\in\{0,\ldots,v_{t}\}$, with $v_0=v_t=x$.  Since $\deg_{H-B(H)}(w)=1$, $v_1,\ldots,v_{t-1}\in B(H)$.  Therefore, $H$ contains no edge $v_i v_{i+r}$ for any $i\in\{0,\ldots,t-r\}$ and any $r\ge 2$.  Therefore, for each $i\in\{1,\ldots,t-1\}$, the only two inner faces of $H$ that include $v_i$ are $v_{i-1}v_iw$ and $v_iv_{i+1}w$.  Therefore $N^+_H(v_i)=\{w\}$ for each $i\in\{2,\ldots,t-2\}$ and $N^+_H(v_1)=N^+_H(v_{t-1})=\{x,w\}$.\footnote{In fact, \cref{inner_degree_1} implies that $t=3$, but this is not important for this proof.}

  \begin{figure}[htbp]
    \centering
    \begin{tabular}{ccc}
      \includegraphics[page=1]{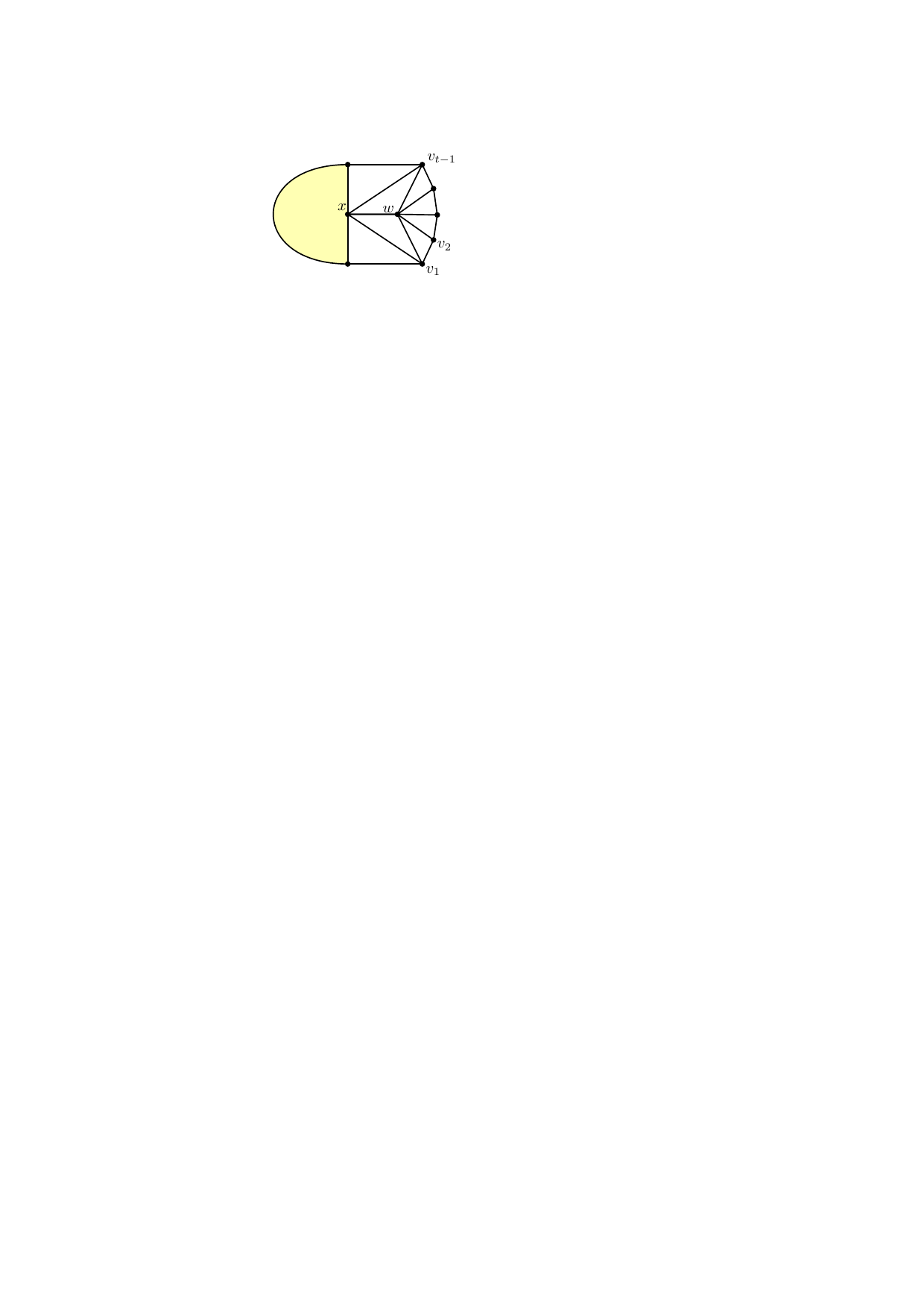} &
      \includegraphics[page=2]{figs/killing_a_leaf} &
      \includegraphics[page=3]{figs/killing_a_leaf} \\
      $H$ & $H-v$ & $H'$
    \end{tabular}
    \caption{The proof of \cref{leaf_killer}.}
    \label{killing_a_leaf}
  \end{figure}

  Let $v:=v_1$. Apply \cref{dom_minimal} to $H-v$ to get a dom-minimal graph $H'$ that dom-respects $H-v$.  Then $w,x\in B(H-v)$.  Since $H'$ is dom-minimal, $v_1,\ldots,v_{t-1}\not\in V(H')$, by \cref{bad_vertex}.  Therefore $N_H(w)\cap V(H')=\{v_t\}=\{x\}$. Since $H'$ is dom-minimal, $w\not\in V(H')$, by \cref{bad_vertex}. Therefore $V(H')\subseteq V(H)\setminus\{v_1,\ldots,v_{t-1},w\}$, so $|V(H')|\le |H|-t\le |H|-3$.  Finally, $B(H-v)\subseteq B(H)\setminus \{v\}\cup\{w,x\}$. By \cref{boundary_subset}, $B(H')\subseteq B(H-v)\setminus\{v_1,\ldots,v_{t-1},w\}\cup\{x\}$, so $|B(H')|\le |B(H)|+2-t\le |B(H)|-1$.
\end{proof}

\subsection{Eliminating Inner Isolated Vertices}
\label{zero_kill_sec}

We now show that, even when all vertices in $B(H)$ have inner-degree at most $2$, $H-B(H)$ is critical, and $H-B(H)$ has no degree-$1$ vertices, we can still efficiently dominate degree-$0$ vertices in $H-B(H)$.

\begin{lem}\label{degree_zero_killer}
  Let $H$ be a dom-minimal generalized near-triangulation such that $\deg^+_H(v)\le 2$ for all $v\in B(H)$, $H-B(H)$ is critical, and $H-B(H)$ contains a vertex $w$ with $\deg_{H-B(H)}(w)=0$ but does not contain any vertex $w'$ with $\deg_{H-B(H)}(w')=1$.  Then
    there exists $v\in B(H)$ and a graph $H'$ that dom-respects $H-v$ such that $|H'|\le |H|-3$ and $|B(H')|\le |B(H)|-1$
\end{lem}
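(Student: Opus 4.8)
The plan is to mirror the structure of the proof of \cref{leaf_killer}, but to handle an \emph{isolated} inner vertex $w$ of $H-B(H)$ rather than a leaf. Since $\deg_{H-B(H)}(w)=0$, all neighbours of $w$ in $H$ lie in $B(H)$. As $w$ is an inner vertex of a near-triangulation, it is surrounded by a cycle $v_0,v_1,\ldots,v_{t-1}$ (with $v_t=v_0$) of boundary vertices, with each $v_{i}v_{i+1}w$ an inner face of $H$. The first step is therefore to record this local structure and to observe that, because $\deg_{H-B(H)}(w)=0$, the cycle $v_0\cdots v_{t-1}$ bounds a face of $H[B(H)]$ (its interior contains only $w$). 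By \cref{useful_little_guy}(1), each $v_i$ is adjacent to $w$, and since $\deg^+_H(v_i)\le 2$ with $w\in N^+_H(v_i)$, each $v_i$ has at most one further inner neighbour.

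The second step is to pick the removed vertex $v:=v_0$ and pass to a dom-minimal subgraph. Apply \cref{dom_minimal} to $H-v$ to obtain a dom-minimal $H'$ that dom-respects $H-v$; by transitivity of dom-respecting it suffices to track vertex and boundary counts through $H-v$. The key combinatorial point is that once $v=v_0$ is deleted, the vertex $w$ loses a boundary neighbour and, crucially, $w$ has no inner neighbours at all; so in $H-v$ the vertex $w$ either becomes a boundary vertex with $\deg^+_{H-v}(w)=0$ or is otherwise forced onto the outer face. In either case, \cref{bad_vertex} guarantees that $w\notin V(H')$, since any boundary vertex of a dom-minimal graph has positive inner degree. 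Likewise each $v_i$ that is left with inner degree $0$ after the deletion is eliminated by \cref{bad_vertex}. I expect to be able to conclude $w\notin V(H')$ together with the removal of $v$ and at least one more $v_i$, giving $|V(H')|\le|H|-3$.

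For the boundary bound, the deletion of $v_0$ merges the incident regions and can only promote $w$ (and possibly some $v_i$) to the outer boundary of $H-v$, so $B(H-v)\subseteq B(H)\setminus\{v_0\}\cup\{w\}$ together with whatever interior vertices become exposed. The cleanup step via \cref{dom_minimal} then removes $w$ and any newly-zero-inner-degree $v_i$ from the boundary, so that the net change to $|B|$ is a decrease. Concretely I would argue $B(H')\subseteq B(H)\setminus\{v_0,\dots\}$ with at least one boundary vertex removed and none added that survives, yielding $|B(H')|\le|B(H)|-1$.

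The main obstacle is the boundary-count bookkeeping: deleting $v_0$ can expose several of the $v_i$'s and $w$ to the outer face at once, and I must be careful that the subsequent dom-minimalization genuinely removes enough of these exposed vertices (via \cref{bad_vertex}) to force a net decrease in $|B|$, rather than merely holding it constant. The hypothesis that $H-B(H)$ has \emph{no} degree-$1$ vertex is exactly what prevents a bad case where some $v_i$ retains a second inner neighbour $w'\ne w$ that would keep $v_i$ on the boundary with positive inner degree; ruling out such $w'$ (using $\deg_{H-B(H)}(w)=0$ and the no-leaf hypothesis together with \cref{useful_little_guy}) is the delicate part I expect to spend the most care on.
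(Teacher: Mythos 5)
Your skeleton (delete one boundary vertex $v$, pass to a dom-minimal $H'$ dom-respecting $H-v$ via \cref{dom_minimal}, then count deleted vertices and track the boundary) is exactly the paper's, and your observation that $w$ is forced off $H'$ by \cref{bad_vertex} once $v$ is deleted is correct. The gap is in where the \emph{third} deleted vertex comes from. You choose $v:=v_0$ arbitrarily among $N_H(w)$ and expect ``the removal of $v$ and at least one more $v_i$,'' i.e.\ another boundary neighbour of $w$. That fails in general: if every $z\in N_H(w)$ has $\deg^+_H(z)=2$ with a second inner neighbour $w_z\neq w$, and adjacent vertices of $F_w$ have \emph{distinct} second inner neighbours, then after deleting $v_0$ and $w$ each remaining $v_i$ still has the inner neighbour $w_{v_i}\in I(H-v_0)$, so dom-minimalization fixes \cref{inner_degree_1} by edge surgery and deletes no further $v_i$; you are left with only two deleted vertices and no net boundary decrease. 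The choice of $v$ also cannot be arbitrary in the other configurations: e.g.\ if some $z\in N_H(w)$ has $\deg^+_H(z)=1$, you must delete one of the two outer-face neighbours of $z$ in the $K_4$ given by \cref{inner_degree_1} (so that $z$ and $w$ both drop to inner degree $0$), not $z$ itself.

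The paper resolves this with a three-way case split on $N_H(w)=V(F_w)$: (i) some $z\in N_H(w)$ has $\deg^+_H(z)=1$; (ii) all have inner degree $2$ and some edge $uv$ of $F_w$ has $N^+_H(u)\subseteq N^+_H(v)$, in which case deleting $v$ also kills $u$; (iii) all have inner degree $2$ and adjacent vertices of $F_w$ have distinct inner neighbourhoods. In case (iii) the third deleted vertex is not any $v_i$ but $w'$, the \emph{other} inner neighbour of the chosen $v$: one shows $w$ and $w'$ lie in different components of $H-v$, that $H[N_{C'}[v]]$ is a $K_4$, and --- this is precisely where the no-degree-$1$ hypothesis enters --- that $\deg_{H-B(H)}(w')=0$, so deleting $v$ leaves $w'$ with inner degree $0$ and \cref{bad_vertex} removes it too. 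So the hypothesis is not used to ``rule out'' a surviving second inner neighbour $w'$, as you propose; such a $w'$ genuinely exists in the hard case, and the argument must instead show that it is deleted along with $v$ and $w$. Without this case analysis and the identification of $w'$ as the third casualty, the counts $|H'|\le|H|-3$ and $|B(H')|\le|B(H)|-1$ do not follow.
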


\begin{proof}
  We may assume that $H$ is connected, otherwise we can apply the lemma to one of the components of $H$ that contains a vertex $w\in I(H)$ with $\deg_{H-B(H)}(w)=0$. Let $F_w$ denote the face in $H[B(H)]$ that contains $w$ in its interior.  Since $H$ is a generalized near triangulation and $N_H(w)\subseteq B(H)$, it follows that $V(F_w)=N_H(w)$.
  There are three cases to consider (see \cref{isolated_fig}):
  \begin{figure}%
    \centering
    \begin{tabular}{ccc}
      (i) & (ii) & (iii)\\
      \includegraphics[page=1]{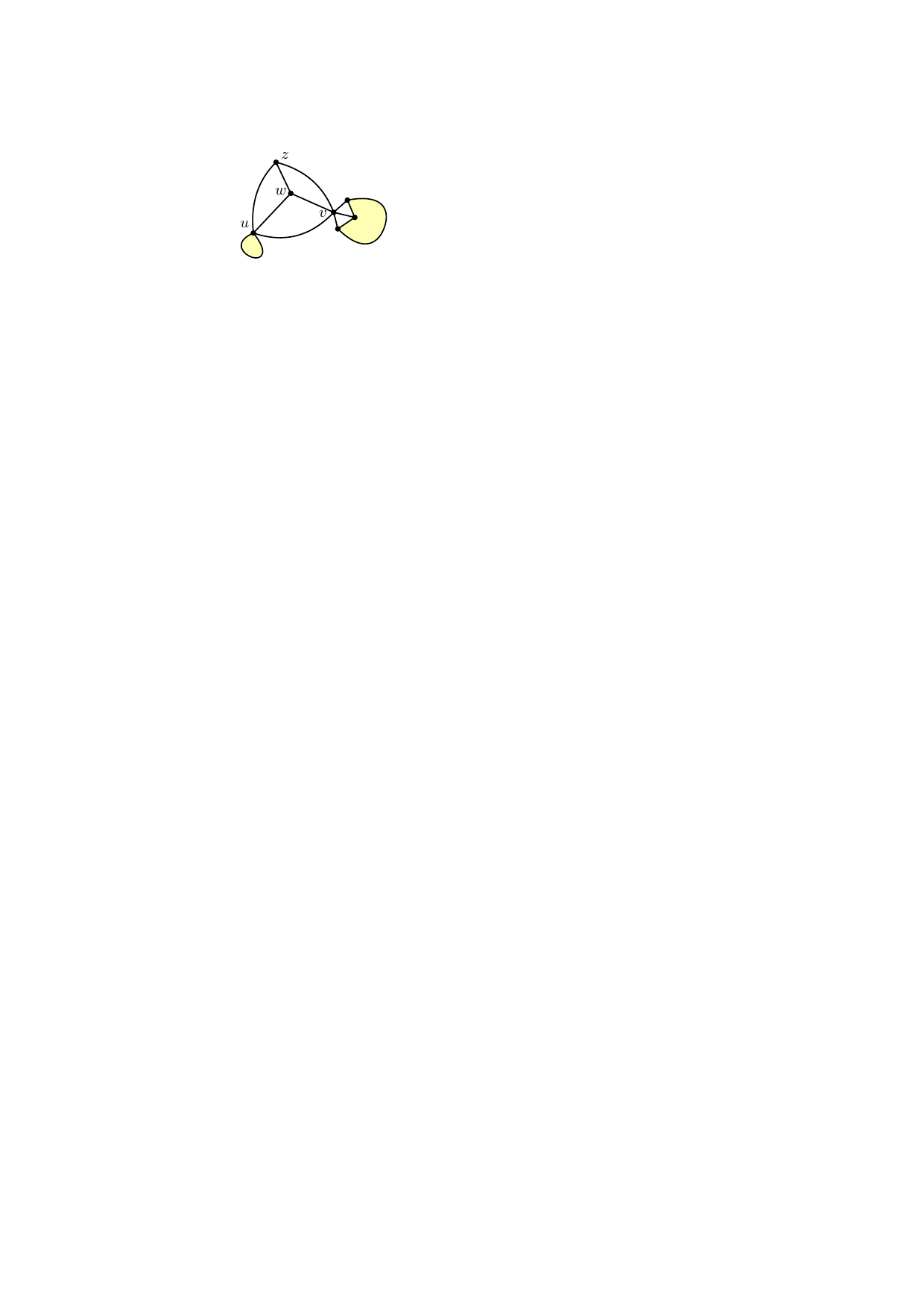} &
      \includegraphics[page=4]{figs/degree_zero_killer} &
      \includegraphics[page=7]{figs/degree_zero_killer} \\
      $\Downarrow$ & $\Downarrow$ & $\Downarrow$  \\
      \includegraphics[page=2]{figs/degree_zero_killer} &
      \includegraphics[page=5]{figs/degree_zero_killer} &
      \includegraphics[page=8]{figs/degree_zero_killer} \\
      $\Downarrow$ & $\Downarrow$ & $\Downarrow$  \\
      \includegraphics[page=3]{figs/degree_zero_killer} &
      \includegraphics[page=6]{figs/degree_zero_killer} &
      \includegraphics[page=9]{figs/degree_zero_killer}
    \end{tabular}
    \caption{Eliminating isolated vertices in $H-B(H)$.}
    \label{isolated_fig}
  \end{figure}
  \begin{enumerate}[(i)]
    \item $\deg^+_H(z)=1$ for some $z\in N_H(w)$. By \cref{inner_degree_1}, $H[N_H[z]]$ is isomorphic to $K_4$.  Let $u$ and $v$ be the two vertices, other than $z$ on the outer face of $H[N_H[z]]$.  Since $\deg_{H-B(H)}(w)=0$, \cref{useful_little_guy} implies that $w$ is the only vertex of $H$ in the interior of the cycle $uvz$. Let $H'$ be a dom-minimal graph that dom-respects $H-v$.  Since $\deg^+_{H-v}(w)=\deg^+_{H-v}(z)=0$, neither $x$ or $w$ are vertices of $H'$. Therefore, $|H'|\le |H-\{u,v,w\}|=|H|-3$.  By \cref{boundary_subset}, $B(H')\subseteq N^+_H(v)\cup B(H-\{v,z,w\})$, so $|B(H')|\le |B(H)|+2-3=|B(H)|-1$, which satisfies the conditions of the lemma.

    \item $\deg^+_H(z)=2$ for each $z\in N_H(w)$ and $N^+_H(v)\supseteq N^+_H(u)$ for some edge $uv\in E(F_w)$.  In this case, let $H'$ be a dom-minimal graph that dom-respects $H-v$. Since $\deg^+_{H-v}(u)=\deg^+_{H-x}(w)=0$, \cref{bad_vertex} implies that neither $u$ nor $w$ is a vertex of $H'$.  Therefore $|H'|\le |H-\{u,v,w\}|=|H|-3$.  Since $B(H-\{u,v,w\})\subseteq B(H)\cup N^+_{H}(x)\setminus \{u,v,w\}$, \cref{boundary_subset} implies that $|B(H')|\le |B(H)|+2-3=|B(H)|-1$.

    \item $\deg^+_H(z)=2$ for each $z\in N_H(w)$ and $N^+_H(u)\neq N^+_H(v)$ for some edge $uv\in E(F_w)$.  Let $w'$ be the unique vertex in $N^+_H(v)\setminus\{w\}$.  Let $vw'x$ and $vw'y$ be the two inner faces of $H$ that share the edge $vw'$.  Since $N^+_H(v)=\{w,w'\}$, both $x$ and $y$ are in $B(H)$. Furthermore, neither $x$ nor $y$ are in $V(F_w)$ since this would imply that $N^+_H(v)= N^+_H(x)=\{w,w'\}$ or that $N^+_H(v)= N^+_H(y)=\{w,w'\}$, and the preceding case would apply.
    By \cref{bad_edge}, the only inner faces of $H$ incident to $v$ are the four faces incident to $vw$ and $vw'$.  Since $x,y\not\in V(F_w)$, this implies that $w$ and $w'$ are in different components, $C$ and $C'$, respectively, of $H-v$.  Then $N^+_{C'}(v)=\{w'\}$ so, by \cref{bad_vertex}, $H[N_{C'}[v]]$ is isomorphic to $K_4$ with vertex set $\{v,x,y,w'\}$.

    We claim that $\deg_{H-B(H)}(w')=0$. For the sake of contradiction, suppose that $\deg_{H-B(H)}(w')>0$.  Since $N_{C'}(v)=\{x,w',y\}$, \cref{useful_little_guy} implies that the cycle $vxw'y$ has no vertices of $H$ in its interior.  Now consider the inner face $xw'x'$ with $x'\neq v$. The fact that $\deg_{H-B(H)}(w')>0$ implies that $x'\neq y$, so $x'$ is in the interior of the cycle $xw'y$. However, $x'$ is the only vertex of $H$ in in the interior of $xw'y$ since, otherwise, \cref{useful_little_guy} implies that $\deg^+_H(x)>2$ or $\deg^+_H(y)>2$.  But this contradicts the assumptions of the lemma, since it implies that $\deg_{H-B(H)}(w')=1$.

    Therefore, $\deg_{H-B(H)}(w')=0$. Let $H'$ be a dom-minimal graph that dom-respects $H-v$.  Then $V(H')\subseteq V(H)\setminus\{w,v,w'\}$, so $|H'|\le |H|-3$ and $B(H')\subseteq B(H)\setminus\{v\}$ so $|B(H')|\le |B(H)|-1$, which satisfies the requirements of the lemma. \qedhere

  \end{enumerate}
\end{proof}

\subsection{$2$-Critical Graphs}
\label{two_critical_section}

We now explain what the algorithm does when it finally reaches a state where  none of \cref{really_good_cor}, \cref{leaf_killer} or \cref{degree_zero_killer} can be used to make an incremental step.  The inapplicability of \cref{leaf_killer,degree_zero_killer,really_good_cor} leads to the following definition:
A generalized near-triangulation $H$ is \defin{$2$-critical} if
\begin{compactenum}[({2-C}1)]
  \item $\deg^+_H(v)\le 2$ for each $v\in B(H)$; \label[tc]{inner_degree_2}
  \item $H-B(H)$ is critical; \label[tc]{inner_critical} and
  \item $\deg_{H-B(H)}(w)\ge 2$ for all $w\in V(H-B(H))$. \label[tc]{inner_vertex_degre_2}
\end{compactenum}
(See \cref{two_critical_figure}.) We will work our way up to a proof of the following lemma, which allows our algorithm to handle $2$-critical graphs directly, in one step:

\begin{figure}
  \centering
  \includegraphics[page=1]{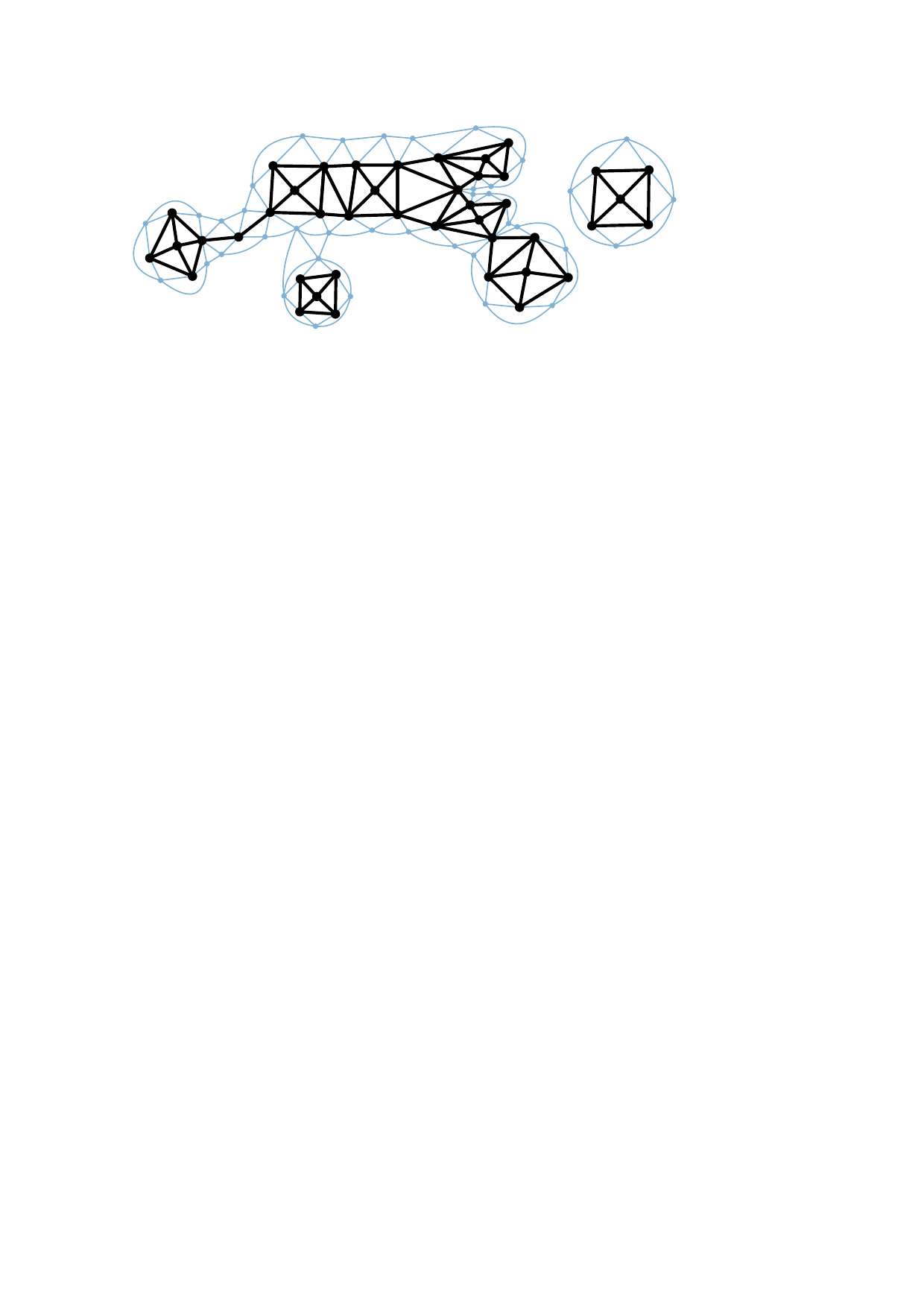}
  \caption{A $2$-critical generalized near-triangulation.}
  \label{two_critical_figure}
\end{figure}

\begin{lem}\label{two_critical_handler}
  Let $H$ be a $2$-critical generalized near-triangulation.  Then there exists $X\subseteq V(H)$ of size at most $(2|B(H-B(H))| + I(H-B(H)))/3$ that dominates $I(H)$ and such that each component of $H[X]$ contains at least one vertex in $B(H)$.
\end{lem}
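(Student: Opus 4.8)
The plan is to peel off the outer layer and reduce everything to a disjoint collection of wheels sitting against the outer face. Write $H_2 := H - B(H)$, $B_2 := B(H_2)$ and $I_2 := I(H_2)$, so that $I(H) = V(H_2) = B_2 \cup I_2$ and the target size is $(2|B_2| + |I_2|)/3$. By condition (2-C2), $H_2$ is critical, so \cref{critical_structure} pins down its structure completely: $H_2[B_2]$ is outerplane, and each marked face is a cycle $C_i$ of length $\ell_i \ge 3$ whose interior holds a single vertex $w_i \in I_2$ adjacent to every vertex of $C_i$ (a wheel). Since $\deg^+_{H_2}(v) \le 1$ for $v \in B_2$, each boundary vertex lies on at most one marked face, so the wheels $C_1,\ldots,C_m$ (with $m = |I_2|$) are pairwise vertex-disjoint; the leftover set $R := B_2 \setminus \bigcup_i V(C_i)$ consists of $B_2$-vertices with no inner neighbour in $H_2$, each of which has at least two $B_2$-neighbours by condition (2-C3).

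The mechanism that lets me anchor everything to $B(H)$ is a local observation. If $uv$ is an edge on the outer face of $H_2$, then $u,v \in I(H)$, so in $H$ the edge $uv$ bounds two inner triangles $uvx$ and $uvy$; exactly one of $x,y$ is the vertex that was deleted when $B(H)$ was removed, and that vertex lies in $B(H)$. Hence every outer-boundary edge of $H_2$ has a \emph{partner} $p \in B(H)$ adjacent to both its endpoints, and in particular every $v \in B_2$ has a neighbour in $B(H)$. This is precisely what is needed to splice a selected inner vertex onto the outer face while keeping each component of $H[X]$ anchored there.

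With this in hand I would build $X$ wheel-by-wheel. For a triangular wheel ($\ell_i = 3$) a single vertex $u \in C_i$ already dominates the whole triangle together with its centre $w_i$; I add $u$ and a neighbour $p \in B(H)$ of $u$, at cost $2$, giving the anchored component $\{u,p\}$. For a larger wheel ($\ell_i \ge 4$) I add the centre $w_i$ (which dominates all of $C_i$ at once), one cycle vertex $u \in C_i$, and a neighbour $p \in B(H)$ of $u$; the path $w_i\,u\,p$ is connected and anchored at $p$, at cost $3$. Charging $2/3$ to each of the $\ell_i$ boundary vertices and $1/3$ to the centre gives a per-wheel budget of $(2\ell_i + 1)/3$, which is $\ge 2$ when $\ell_i = 3$ and $\ge 3$ when $\ell_i \ge 4$; thus every wheel pays for itself, with $\ell_i = 4$ exactly tight. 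Vertices of $R$ are then dominated separately: two consecutive $R$-vertices sharing an outer-boundary edge are dominated together by their common $B(H)$-partner (a singleton component that trivially meets $B(H)$), while $R$-vertices already adjacent to a chosen cycle vertex or partner come for free, all within the residual $2/3$ per $R$-vertex.

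The honest difficulty lives entirely in the global accounting, not in any single wheel. I expect the main obstacle to be proving that the residual budget of $2/3$ per $R$-vertex always suffices after the wheels are paid for — in particular dealing with $R$-vertices that are \emph{isolated} between two wheels, whose naive domination cost is $1$ and so must be charged against an adjacent wheel selection or a shared partner, and ensuring that partners reused across consecutive selections are counted only once. A secondary obstacle is that $H$ is only a \emph{generalized} near-triangulation, so $H_2[B_2]$ may have cut-vertices, bridges, and several components; the clean single outer cycle must be replaced by the outer boundary walk, and the charging carried out block-by-block (using \cref{useful_little_guy} to control how the wheels attach at cut-vertices). I would therefore structure the argument as a sequence of reductions that first absorbs $R$ and dismantles the non-$2$-connected structure, leaving a disjoint union of wheels to which the simple per-wheel scheme above applies directly, and then verify by summation that $|X| \le (2|B_2| + |I_2|)/3$ with every component of $H[X]$ meeting $B(H)$.
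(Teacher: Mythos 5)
Your structural setup is sound and matches what the paper extracts from \cref{critical_structure}: $H-B(H)$ decomposes into disjoint wheels plus leftover boundary vertices, and every vertex of $B(H-B(H))$ has a neighbour in $B(H)$ because each outer edge of $H-B(H)$ lies on an inner triangle of $H$ whose apex is in $B(H)$ (this is exactly the anchoring device the paper uses in \cref{three_sets_coverage}). But the proof is not complete, and the missing part is not routine: your entire bound rests on a per-vertex charging scheme ($2/3$ per vertex of $B(H-B(H))$, $1/3$ per inner vertex) in which the $4$-wheels are already tight with zero slack, while an $R$-vertex that is not adjacent to any selected vertex costs $1$ against a budget of $2/3$. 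You acknowledge this deficit of $1/3$ per such vertex but give no argument that it can always be absorbed — no invariant, no discharging rule, no case analysis of how $R$-vertices, tight wheels, cut vertices, and multiple components of $H-B(H)$ can interleave. Since the statement is exactly the sum of the tight cases, there is no global slack to hide behind; as written, the proof could fail on a configuration of isolated $R$-vertices wedged between $4$-wheels, and ruling that out is the whole difficulty.

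The paper takes a genuinely different route that makes this accounting unnecessary. Rather than building one set directly, \cref{three_sets_coverage} builds \emph{three} sets $X_0,X_1,X_2$, each dominating $I(H)$ and each with its components anchored in $B(H)$, whose \emph{total} size is at most $2|B(H-B(H))|+|I(H-B(H))|$: each component $C$ of $H-B(H)$ is partitioned into three classes that each dominate $C$ (via \cref{biconnected_critical,wheelie,critical}, an induction that contracts inner vertices onto the boundary and reduces to a proper $3$-colouring of a maximal outerplanar graph, with even wheels handled separately), contributing $|C|$ vertices, and then each boundary vertex of $C$ in each class receives one anchor in $B(H)$, contributing $|B(C)|$ more. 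The factor of $3$ in the lemma then comes from taking the smallest of the three sets — a pigeonhole step — so no vertex ever needs to be charged individually and the isolated-$R$-vertex problem never arises. (Note also that in this partition the vertices you call $R$ are ordinary members of the three colour classes, dominated because every vertex lies in a triangle of the $3$-coloured outerplanar graph.) If you want to salvage your direct construction, you would need to supply the discharging argument you deferred; otherwise the three-set averaging is the idea your proof is missing.
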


\begin{lem}
  Let $H$ be a dom-minimal $2$-critical generalized near-triangulation.  Then $\deg^+_H(v)=2$ for all $v\in B(H)$.
\end{lem}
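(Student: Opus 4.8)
The plan is to show that a dom-minimal $2$-critical graph $H$ cannot have any boundary vertex of inner-degree strictly less than $2$, so that combined with \cref{inner_degree_2} we get exactly $\deg^+_H(v)=2$ for all $v\in B(H)$. By \cref{bad_vertex}, every $v\in B(H)$ has $\deg^+_H(v)\ge 1$, so the only thing to rule out is $\deg^+_H(v)=1$. So suppose for contradiction that some $v\in B(H)$ has $\deg^+_H(v)=1$. Since $H$ is dom-minimal, \cref{inner_degree_1} tells us that $H[N_H[v]]$ is isomorphic to $K_4$; write its vertex set as $\{v,x,y,w\}$ where $x,y\in B(H)$ are the two neighbours of $v$ on the outer face and $w=N^+_H(v)$ is the unique inner neighbour of $v$. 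The key point to extract from this $K_4$ is information about the neighbourhood of the inner vertex $w$ in $H-B(H)$, which I will contradict against \cref{inner_vertex_degre_2}.

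The main step is to argue that this forced $K_4$ structure pins down $w$ as a degree-too-small vertex of $H-B(H)$. First I would observe, via \cref{useful_little_guy} applied to the $3$-cycle $xvy$ (or the relevant cycles inside the $K_4$), that since $\deg^+_H(v)=1$ the only inner vertex that $v$ sees is $w$, and that the triangle $vxy$ (if $xy$ is an edge, which it is in $K_4$) encloses only $w$ in a tightly controlled region. The goal is to show that $w$ has at most one neighbour in $I(H)=V(H-B(H))$: its neighbours $x,y,v$ are all boundary vertices, so they contribute nothing to $\deg_{H-B(H)}(w)$, and I need to bound how many further inner neighbours $w$ can have. Because $\deg^+_H(x)\le 2$ and $\deg^+_H(y)\le 2$ by \cref{inner_degree_2}, and $x,y$ already see $w$ (and $v$ sees $w$), the triangulation structure around $w$ forces $\deg_{H-B(H)}(w)$ to be too small — I expect it to be $0$ or $1$ — directly contradicting \cref{inner_vertex_degre_2}, which demands $\deg_{H-B(H)}(w)\ge 2$.

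I anticipate the main obstacle is the careful face-tracing argument that controls exactly which vertices lie in the interior of the relevant triangles and how the $\deg^+_H(x),\deg^+_H(y)\le 2$ bounds limit $w$'s inner neighbourhood. This is the same flavour of local-embedding reasoning used in \cref{leaf_killer} and \cref{degree_zero_killer}: one lists the inner faces incident to $v$, notes that \cref{bad_edge} forces the outer edges $vx,vy$ onto inner faces $vxw$ and $vyw$ (so those are the only two inner faces at $v$), and then uses \cref{useful_little_guy} on the cycle bounding the region that contains $w$ to count $w$'s inner neighbours. The delicate part is ensuring that no inner neighbour of $w$ escapes into a region I haven't accounted for; once the enclosing cycle around $w$ is correctly identified, the inner-degree bounds on $x$ and $y$ cap $\deg_{H-B(H)}(w)$ below $2$, completing the contradiction and hence the proof.
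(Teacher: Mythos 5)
Your proposal is correct and follows essentially the same route as the paper's proof: assume $\deg^+_H(v)=1$, invoke \cref{inner_degree_1} to get the $K_4$ on $\{v,x,y,w\}$, note that the triangles $vxw$ and $vyw$ have empty interiors (else $\deg^+_H(v)\ge 2$), so any inner neighbours of $w$ lie inside the triangle $xwy$, and then \cref{useful_little_guy} shows that $\deg_{H-B(H)}(w)\ge 2$ would force $\deg^+_H(x)\ge 3$ or $\deg^+_H(y)\ge 3$. The paper phrases the final contradiction against \cref{inner_degree_2} while you phrase it against \cref{inner_vertex_degre_2}, but this is the same incompatibility read in opposite directions, and the ``delicate'' confinement step you flag is exactly the two-sentence application of \cref{useful_little_guy} the paper uses.
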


\begin{proof}
  Consider some $v\in B(H)$. By \cref{bad_vertex}, $\deg^+_H(v)\ge 1$. Assume for the sake of contradiction that $\deg^+_H(v)=1$. By \cref{inner_degree_1}, $H[N_H[v]]$ is isomorphic to $K_4$. Let $x$ and $y$ be the neighbours of $v$ on the outer face of $H$ and let $w$ be the inner neighbour of $v$. Since $\deg^+_H(v)=1$ then the cycle $vxwy$ has no vertices of $H$ in its interior.  Since $H$ is $2$-critical, $\deg_{H-B(H)}(w)\ge 2$, which implies that $w$ has at least two neighbours in the interior of the cycle $ywx$.  By \cref{useful_little_guy}, at least one of $x$ or $y$, say $x$, has at least two neighbours in the interior of $ywx$.  \Cref{useful_little_guy} implies that $\deg^+_H(x)\ge 3$, which contradicts the fact that $H$ is $2$-critical.
\end{proof}

\begin{lem}\label{two_critical_boundary_size}
  Let $H$ be a dom-minimal $2$-critical generalized near-triangulation.  Then $|B(H)|\ge |B(H-B(H))|$.
\end{lem}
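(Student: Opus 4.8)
The plan is to prove the inequality by a double-counting argument on the edges joining $B:=B(H)$ to the inner vertices $I(H)=V(H-B(H))$, where essentially all the work is concentrated in one local structural claim. Write $H':=H-B(H)$, $B':=B(H')$, and, for each inner vertex $w\in I(H)$, let $d(w):=|N_H(w)\cap B|$ count its boundary neighbours. Counting the $B$--$I(H)$ edges from both ends gives the identity $\sum_{w\in I(H)} d(w)=\sum_{v\in B}\deg^+_H(v)$, and the preceding lemma (which guarantees $\deg^+_H(v)=2$ for every $v\in B$ in a dom-minimal $2$-critical graph) turns the right-hand side into $2|B|$. Since every $d(w)$ is non-negative, it therefore suffices to prove $\sum_{w\in B'} d(w)\ge 2|B'|$, and I will obtain this from the per-vertex estimate $d(w)\ge 2$ for each $w\in B'$.

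The heart of the proof is thus the claim that every vertex $w$ on the outer face of $H'$ has at least two neighbours in $B$. Here I use that $w\in I(H)$ is an inner vertex, so every face incident to $w$ is a triangle and its neighbourhood forms a wheel: the neighbours can be cyclically ordered as $u_0,\dots,u_{m-1}$ with $u_iu_{i+1}\in E(H)$ and $m=\deg_H(w)\ge 3$. I then rule out the two bad cases. If $d(w)=0$, all $u_i$ lie in $V(H')$, so the cycle $u_0\cdots u_{m-1}$ lies in $H'$ and encloses $w$, forcing $w\in I(H')$ and contradicting $w\in B'$. If $d(w)=1$, with unique boundary neighbour $u_j$, then its two wheel-neighbours $u_{j-1}$ and $u_{j+1}$ both lie in $V(H')=I(H)$; together with $w$ they give $u_j$ three distinct inner neighbours (distinct because $m\ge 3$), contradicting $\deg^+_H(u_j)=2$. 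Hence $d(w)\ge 2$, as required.

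Combining these yields $2|B|=\sum_{w\in I(H)}d(w)\ge\sum_{w\in B'}d(w)\ge 2|B'|$, that is, $|B(H)|\ge|B(H-B(H))|$. The routine parts are the counting identity and discarding the non-negative contributions of the inner vertices of $H'$; the only real obstacle is the local claim $d(w)\ge 2$, and the decisive observation there is that a single boundary neighbour of $w$ is forced to form triangles with its two cyclic neighbours in $w$'s wheel, which already pushes its inner-degree to at least three. I expect no trouble from the generalized (possibly disconnected or merely $1$-connected) setting, since both the wheel structure of an inner vertex and the identity $\deg^+_H(v)=2$ are purely local.
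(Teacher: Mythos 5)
Your proof is correct, and it takes a genuinely different route from the paper's. The paper argues component by component: for each component $C$ of $H-B(H)$ it walks around the outer face of $C$, assigns to each edge $w_{i-1}w_i$ of that closed walk the apex $v_i\in B(H)$ of the triangular face on its outer side, and uses $\deg^+_H(v)\le 2$ together with simplicity to show that all these apexes are distinct within and across components, giving $|B(H)|\ge\sum_C |B(C)|$. You instead double-count the edges between $B(H)$ and $I(H)$: the handshake identity plus $\deg^+_H(v)\le 2$ bounds their number above by $2|B(H)|$ from the boundary side, while your local claim that every $w\in B(H-B(H))$ has at least two neighbours in $B(H)$ bounds it below by $2|B(H-B(H))|$ from the inner side. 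The two arguments are dual in emphasis --- the paper charges outer-walk \emph{edges} of the components to distinct boundary vertices, you charge outer-walk \emph{vertices} to pairs of incident boundary edges --- and both rest on the same facts (the wheel structure around an inner vertex and the inner-degree cap on $B(H)$). Your version buys two small things: it avoids the mildly delicate injectivity claim $v_i\ne v_j$ and any worry about the outer walk revisiting vertices, and it uses only condition (2-C1), so dom-minimality and the preceding lemma's exact equality $\deg^+_H(v)=2$ are not actually needed (the inequality $\le 2$ suffices throughout your chain). In a polished write-up you should spell out why $\deg_H(w)\ge 3$ for an inner vertex $w$ (a degree-$2$ inner vertex whose incident faces are all triangles would force a parallel edge), since that is exactly what guarantees $u_{j-1}\ne u_{j+1}$ in your $d(w)=1$ case.
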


\begin{proof}
  Let $\mathcal{C}$ be the set of components of $H-B(H)$.
  Let $C$ be a component in $\mathcal{C}$ and let $w_0,\ldots,w_k$ be the clockwise walk around the outer face of $C$, so that $w_0=w_k$.  Then, for each $i\in\{1,\ldots,k\}$, $H$ contains an inner face $w_{i-1}w_iv_i$ that is to the left of the edge $w_{i-1}w_i$ when traversed from $w_{i-1}$ to $w_i$ and $v_i\in B(H)$.  Since $H$ is $2$-critical and does not contain parallel edges, $v_i\neq v_j$ for any $i\neq j$. Let $N_2(C):=\{v_1,\ldots,v_k\}$.  Therefore $|N_2(C)|= k\ge |B(C)|$. Since $H$ is $2$-critical and $\deg^+_C(v)\ge 2$ for all $v\in N_2(C)$, $N_2(C)\cap N_2(C')=\emptyset$ for any distinct components $C,C'\in\mathcal{C}$.  Therefore $|B(H)|\ge \sum_{C\in\mathcal{C}} |N_2(C)|\ge \sum_{C\in\mathcal{C}} |B(C)|=|B(H-B(H))|$.
\end{proof}

For each integer $r\ge 3$, the \defin{$r$-wheel} $W_r$ is the near-triangulation whose outer face is bounded by a cycle $v_0,\ldots,v_{r-1}$ that contains a single vertex $x$ in its interior and that is adjacent to each of $v_0,\ldots,v_{r-1}$.  For even values of $r$, $W_r$ is called an \defin{even wheel}. Note that the following lemma, illustrated in \cref{biconnected_critical_colouring} is about critical graphs, not $2$-critical graphs.

\begin{figure}
  \centering
  \includegraphics[page=7,trim={0 55 0 10},clip]{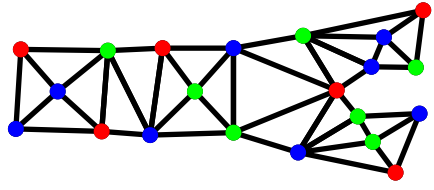}
  \caption{\cref{biconnected_critical}: Partitioning the vertices of a biconnected critical graph into three dominating sets.}
  \label{biconnected_critical_colouring}
\end{figure}

\begin{lem}\label{biconnected_critical}
  Let $H$ be a biconnected critical generalized near-triangulation with at least $3$ vertices and not isomorphic to $W_k$ for any even integer $k$.  Then there exists a partition $\{X_0,X_1,X_2\}$ of $V(H)$ such that
  \begin{compactenum}[(i)]
    \item For each edge $vw$ of $H[B(H)]$, $v\in X_i$ and $w\in X_j$ for some $i\neq j$;\label[p]{proper}
    \item for each $i\in\{0,1,2\}$, $X_i$ dominates $H$. \label[p]{dominates_h}
  \end{compactenum}
\end{lem}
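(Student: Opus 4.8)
The plan is to recast \cref{proper,dominates_h} as a single colouring problem and then build the colouring by induction on the number of interior faces of $H[B(H)]$. A partition $\{X_0,X_1,X_2\}$ of $V(H)$ is nothing but a $3$-colouring of $V(H)$; \cref{proper} asks that this colouring be proper on $H[B(H)]$, and \cref{dominates_h} is equivalent to the statement that every closed neighbourhood $N_H[v]$ contains all three colours. Indeed, $X_i$ dominates $H$ exactly when colour $i$ appears in $N_H[v]$ for every $v$, so requiring all three $X_i$ to dominate is the same as requiring every $N_H[v]$ to be \emph{rainbow}. Thus the goal becomes: find a proper $3$-colouring of $H[B(H)]$, together with a colour for each inner vertex, so that every closed neighbourhood is rainbow.

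Next I would exploit the structure of critical graphs from \cref{critical_structure}: $H[B(H)]$ is a $2$-connected outerplane graph, each of its interior faces is either an unmarked triangle or a marked polygon containing a single inner vertex (a \emph{hub}) adjacent to all of its rim, and, because $\deg^+_H(v)\le 1$, every boundary vertex lies on \emph{at most one} marked face. This lets me isolate the vertices that actually constrain the colouring. A vertex lying on any unmarked triangle is automatically rainbow once the colouring is proper, since the three vertices of that triangle receive distinct colours; so the only delicate vertices are the hubs and the degree-$3$ rim vertices whose only incident faces are the two wheel-triangles of a single marked face — call these \emph{bad}. A bad rim vertex whose two rim-neighbours share a colour forces its hub to take the third colour, while a hub is rainbow as soon as some rim edge avoids its colour (which is automatic once a bad vertex is present, and otherwise free to arrange). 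Hence the whole problem reduces to producing a proper $3$-colouring of $H[B(H)]$ in which, within each marked face, all bad rim vertices demand one common colour; the hub colours are then read off.

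I would construct such a colouring by induction on the number of interior faces of $H[B(H)]$. In the base case there is a single interior face, so $H$ is a triangle or a wheel: a triangle is coloured $0,1,2$; an odd wheel is coloured by alternating two colours around the rim and capping the single leftover vertex with the third colour, so that every defect points at that third colour, which is then given to the hub (an even-wheel rim can be $2$-coloured outright, should such a piece arise). For the inductive step I would choose a chord $ab$ of $H[B(H)]$ and split $H$ along it into two biconnected critical near-triangulations $H_1,H_2$, each with strictly fewer interior faces and sharing only the edge $ab$; note that a wheel has no chord, so no piece fed to the inductive hypothesis is a wheel. I colour each piece recursively and then merge: because the three colours are interchangeable, I apply a colour permutation to $H_2$ so that the two colourings agree on the ordered pair $(a,b)$, and since the chord separates the two sides this glues them into a proper colouring of $H[B(H)]$. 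The merge preserves demand-consistency because every marked face lies wholly on one side, and because $ab$, being incident to at least one unmarked triangle (otherwise $a$ would be adjacent to two hubs, contradicting $\deg^+_H(a)\le 1$), guarantees that neither $a$ nor $b$ is bad.

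The hard part will be the single-wheel base case, which is exactly where parity enters and why the even wheels are set aside in the statement: a \emph{symmetric} $3$-colouring of a rim produces bad vertices demanding different colours, so the construction must instead funnel every defect to one colour (the alternate-and-cap scheme for odd rims, the two-colour scheme for even ones) so that a single hub colour simultaneously rainbow-completes all bad rim vertices and the hub itself. Beyond nailing this, the remaining care is bookkeeping: verifying rigorously that only hubs and degree-$3$ rim vertices can fail to be rainbow, and that chord endpoints are never bad so the permutation-merge is safe. Once the wheel analysis is pinned down, the chord split together with the colour-permutation symmetry makes the inductive step routine.
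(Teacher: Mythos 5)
Your proposal is correct in substance but follows a genuinely different induction from the paper's. The paper inducts on $|I(H)|$: it picks an inner vertex $x$ in a marked face $f$, contracts the edge $vx$ onto a rim vertex $v$ incident to a chord of $H[B(H)]$, colours the contracted graph, and places $x$ in $v$'s class; its base case is a proper $3$-colouring of an edge-maximal outerplanar graph, with odd wheels handled by an explicit colouring. You instead induct on the number of inner faces of $H[B(H)]$, splitting along a chord and regluing the two recursive colourings by the unique colour permutation that matches them on the chord's endpoints, with triangles and wheels as base cases. Your route needs two extra structural observations that you do supply: every marked face lies wholly on one side of any chord (so demand-consistency is inherited from the piece containing it, and hubs and ``bad'' rim vertices keep their full closed neighbourhoods in that piece), and the only vertices whose closed neighbourhoods are not automatically rainbow under a proper colouring of $H[B(H)]$ are the hubs and the rim vertices lying on exactly one (marked) face. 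In exchange you avoid the contraction bookkeeping, and your ``rainbow closed neighbourhood'' reformulation of \cref{dominates_h} is a clean way to organize the case analysis.

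One remark in your write-up is false, although it does not sink the argument: ``a wheel has no chord, so no piece fed to the inductive hypothesis is a wheel.'' Pieces obtained by splitting along a chord can certainly be wheels, including even wheels --- for instance, a marked quadrilateral face sharing the chord with an unmarked triangle yields $W_4$ as one piece; whether the \emph{piece} has a chord has no bearing on whether it \emph{is} a wheel. What saves you is that your base case explicitly handles even wheels (two-colour the rim, give the hub the third colour), so your induction in fact proves the conclusion for \emph{all} biconnected critical generalized near-triangulations with at least $3$ vertices, even wheels included; you should state the induction hypothesis that way rather than excluding even wheels and then quietly re-admitting them. (The paper's exclusion of even wheels is an artifact of its particular rim colouring and of the rooted variant it needs later in \cref{wheelie,critical}; it is not needed for \cref{proper,dominates_h} themselves.)
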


\begin{proof}
  If $H$ is isomorphic to $W_k$ for some odd integer $k\ge 3$, then we take $X_0:=\{v_0, x\}$, $X_1:=\{v_{2i-1}:i\in\{1,\ldots,\lfloor k/2\rfloor\}$, and $X_2:=\{v_{2i}:i\in\{1,\ldots,\lfloor k/2\rfloor\}$.  It is straightforward to verify that these sets satisfy \cref{proper,dominates_h}.  (The fact that $k$ is odd ensures that $v_0$ has a neighbour $v_1\in X_1$ and $v_{k-1}\in X_2$, which ensures \cref{dominates_h}---this is not true for even $k$.)   We now assume that $H$ is not isomorphic to $W_k$ for any integer $k$.  By \cref{critical_structure}, this implies that $H$ is outerplanar or that $H[B(H)]$ has at least two inner faces.

  We now proceed by induction on $|I(H)|$.  If $|I(H)|=0$ then $H$ is an edge-maximal outerplanar graph, and therefore has a proper $3$-colouring.  We take $X_0$, $X_1$, and $X_2$ to be the three colour classes in this colouring.  This choice clearly satisfies \cref{proper}. Since each vertex of $H$ is included in at least one triangle, each vertex of $H$ is dominated by each of $X_0$, $X_1$, and $X_2$, so this choice satisfies \cref{dominates_h}.

  \begin{figure}
    \centering
    \begin{tabular}{ccc}
      \includegraphics[page=1]{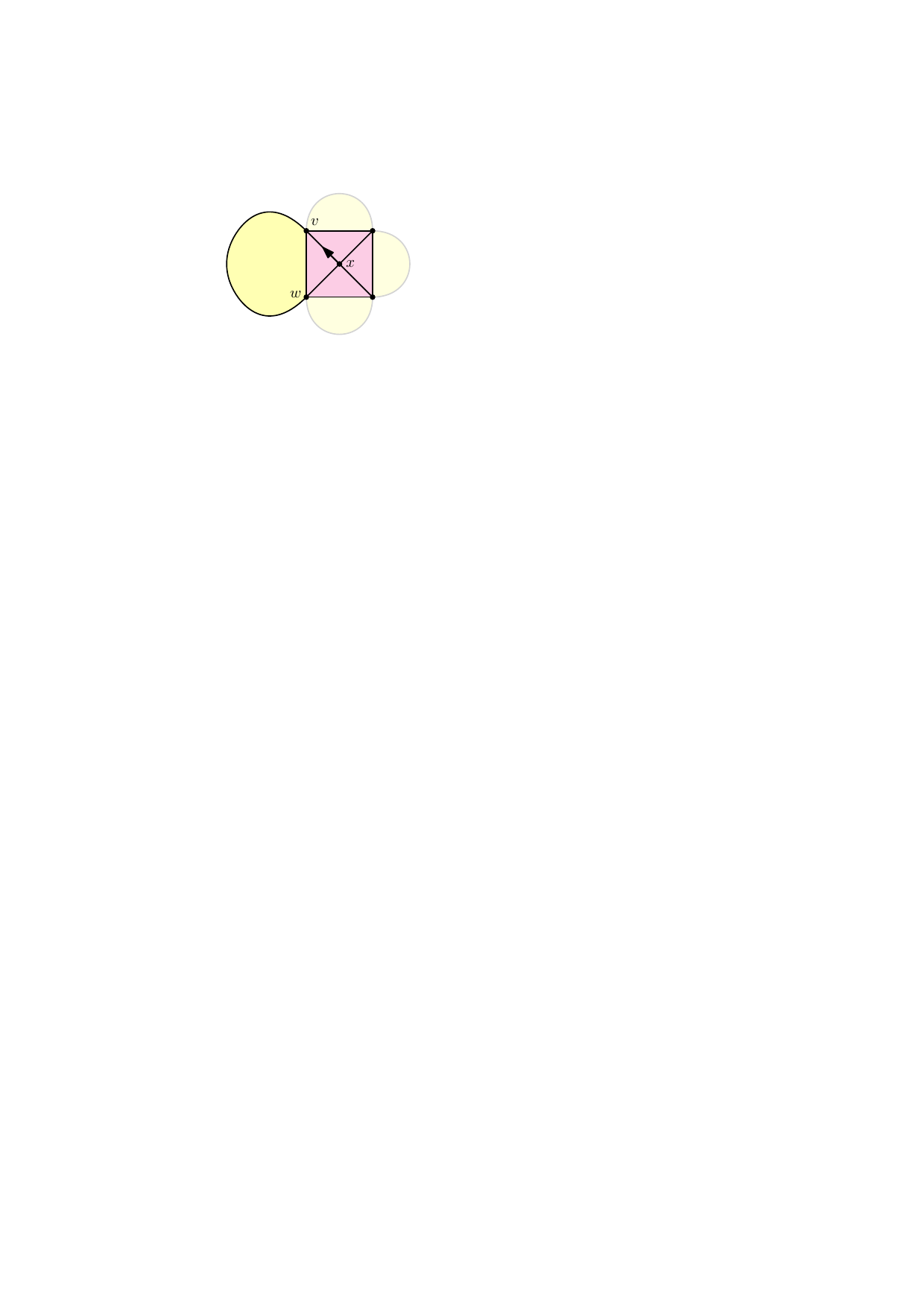} &
      \includegraphics[page=2]{figs/biconnected} &
      \includegraphics[page=3]{figs/biconnected} \\
      $H$ & $H'$ & $H$
    \end{tabular}
    \caption{The proof of \cref{biconnected_critical}.}
    \label{contraction_proof}
  \end{figure}
  If $|I(H)|\ge 1$ then $H$ is not outerplanar.  See \cref{contraction_proof}.  Since $H$ is not isomorphic to $W_k$ for any integer $k$, $H[B(H)]$ contains at least two inner faces.  Let $x$ be an inner vertex of $H$ and let $f$ be the marked face of $H[B(H)]$ that contains $x$.
  Since $H[B(H)]$ has at least two inner faces and $H$ is biconnected, $f$ contains an edge $vw$ that is on the boundary of two inner faces of $H$.  Let $H'$ be the graph obtained by contracting the edge $vx$ into $v$. Then $H'[B(H')]=H[B(H)]$ and $H'$ is a biconnected critical generalized near-triangulation so we apply induction to obtain sets $X_0'$, $X_1'$ and $X_2'$.  Without loss of generality, we can assume that $v$ is in $X_1'$.  Then we set $X_0:=X_0'$, $X_1:=X_1'\cup\{x\}$ and $X_2:=X_2'$. Since $H'[B(H')]=H[B(H)]$ this clearly satisfies \cref{proper}.  Since $H'$ contains the edge $vw$ for each $w\in V(f)\setminus\{v\}$, \cref{proper} implies that the vertices of the path $f-v$ are alternately contained in $X_2$ and $X_0$.

  All that remains is to show that $X_0$, $X_1$, and $X_2$ satisfy \cref{dominates_h}.  The inductive hypothesis already implies that each of these sets dominates $V(H)\setminus V(f)$.  Since $x$ is adjacent to every vertex of $f$, it is adjacent to at least one vertex of $X_0$ and at least one vertex of $X_2$.  Therefore, each of $X_0$, $X_1$, and $X_2$ dominates $x$.  For each vertex $w\in V(f)\setminus\{v\}$, $w$ is adjacent to $x\in X_1$, $w\in X_{i}$ for some $i\in\{0,2\}$ and $w$ is adjacent to a neighbour $w'\in X_{2-i}$ in $f$, so each of these sets dominates $w$.  Finally, since the vertex $v$ is incident to a chord of $H[B(H)]$, it is incident to a second face $f'\neq f$ of $H[B(H)]$.  Since $f$ is marked and $H$ is critical, $f'$ is not marked.  Therefore $f'$ is a triangle with one vertex in each of $X_0$, $X_1$, and $X_2$. Therefore each of these sets dominates $v$.
\end{proof}

The following lemma, illustrated in \cref{even_wheel}
 explains how we deal with even wheels not covered by \cref{biconnected_critical}:

\begin{lem}\label{wheelie}
  Let $H:=W_k$ for some even integer $k\ge 4$ and let $v$ be any vertex in $B(H)$.  Then there exists a partition $\{X_0,X_1,X_2\}$ of $V(H)$ such that
  \begin{compactenum}[(i)]
    \item For each edge $vw$ of $H[B(H)]$, $v\in X_i$ and $w\in X_j$ for some $i\neq j$;\label[p]{proper2}
    \item $X_0$ dominates $V(H)\setminus\{v\}$ and $X_1$ and $X_2$ each dominate $H$. \label[p]{weak_dominates_h}
  \end{compactenum}
\end{lem}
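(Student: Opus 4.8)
The plan is to mirror the odd-wheel construction from the proof of \cref{biconnected_critical}, and to use the relaxed domination requirement on $X_0$ to absorb the single defect that the \emph{even} length of the boundary cycle unavoidably forces. By the rotational symmetry of $W_k$ I may relabel the rim $v_0,\ldots,v_{k-1}$ so that the distinguished vertex is $v=v_0$. First I would set $X_1:=\{v_0,x\}$, placing the hub $x$ in the same class as $v_0$. Since $x$ is adjacent to every rim vertex, $N_H[X_1]\supseteq N_H[x]=V(H)$, so $X_1$ dominates $H$ for free; more generally, any class containing a rim vertex automatically dominates $x$.

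Next I would colour the remaining rim vertices, which form the path $v_1,v_2,\ldots,v_{k-1}$, by alternating the two colours $X_2$ and $X_0$: put $v_i\in X_2$ when $i$ is odd and $v_i\in X_0$ when $i$ is even. Because $k$ is even, both endpoints $v_1$ and $v_{k-1}$ of this path land in $X_2$. I would then verify \cref{proper2}: the colours strictly alternate between $X_2$ and $X_0$ along the path, while the two cycle edges at $v_0$ join $v_0\in X_1$ to $v_1,v_{k-1}\in X_2$, so every boundary edge is properly coloured. For the domination half of \cref{weak_dominates_h}, $X_1$ dominates $H$ via $x$ as noted; and $X_2$ dominates all of $H$ because each even-indexed $v_i$ sees its odd neighbour $v_{i-1}\in X_2$, $x$ sees $v_1\in X_2$, and crucially $v_0$ sees $v_1\in X_2$.

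It remains to check that $X_0$ dominates $V(H)\setminus\{v_0\}$: every odd-indexed rim vertex has an even-indexed neighbour in $X_0$, and $x$ sees $v_2\in X_0$ (which exists precisely because $k\ge4$). The one thing that cannot be arranged, and the whole reason even wheels are carved out of \cref{biconnected_critical}, is that $X_0$ genuinely fails at $v_0$: the only neighbours of $v_0$ are $x\in X_1$ and $v_1,v_{k-1}$, which the even length forces into the \emph{same} class $X_2$, so once $v_0$ and $x$ share a class no alternating colouring lets $X_0$ reach $v_0$. The base case $k=4$, where $X_0=\{v_2\}$ is a single vertex, already exhibits this unavoidable shortfall, and it is exactly the defect that the weakened conclusion \cref{weak_dominates_h} is designed to tolerate. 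I therefore expect the main (though modest) obstacle not to be producing the colouring, but to be the bookkeeping that confirms this shortfall at $v_0$ is the \emph{only} one — i.e.\ that $X_2$ still reaches $v_0$ through $v_1$ and that $X_0$ still covers the hub $x$ and every remaining rim vertex for all even $k\ge4$.
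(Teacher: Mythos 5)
Your construction is exactly the one in the paper: $X_1=\{v_0,x\}$, $X_2$ the odd-indexed rim vertices, $X_0$ the even-indexed rim vertices other than $v_0$, and your verification (including the observation that $X_0$ necessarily misses $v_0$, which is precisely the weakening in the statement) is correct. The paper simply states these sets and asserts the properties, so your write-up is the same proof with the checking made explicit.
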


\begin{figure}
  \centering
  \includegraphics{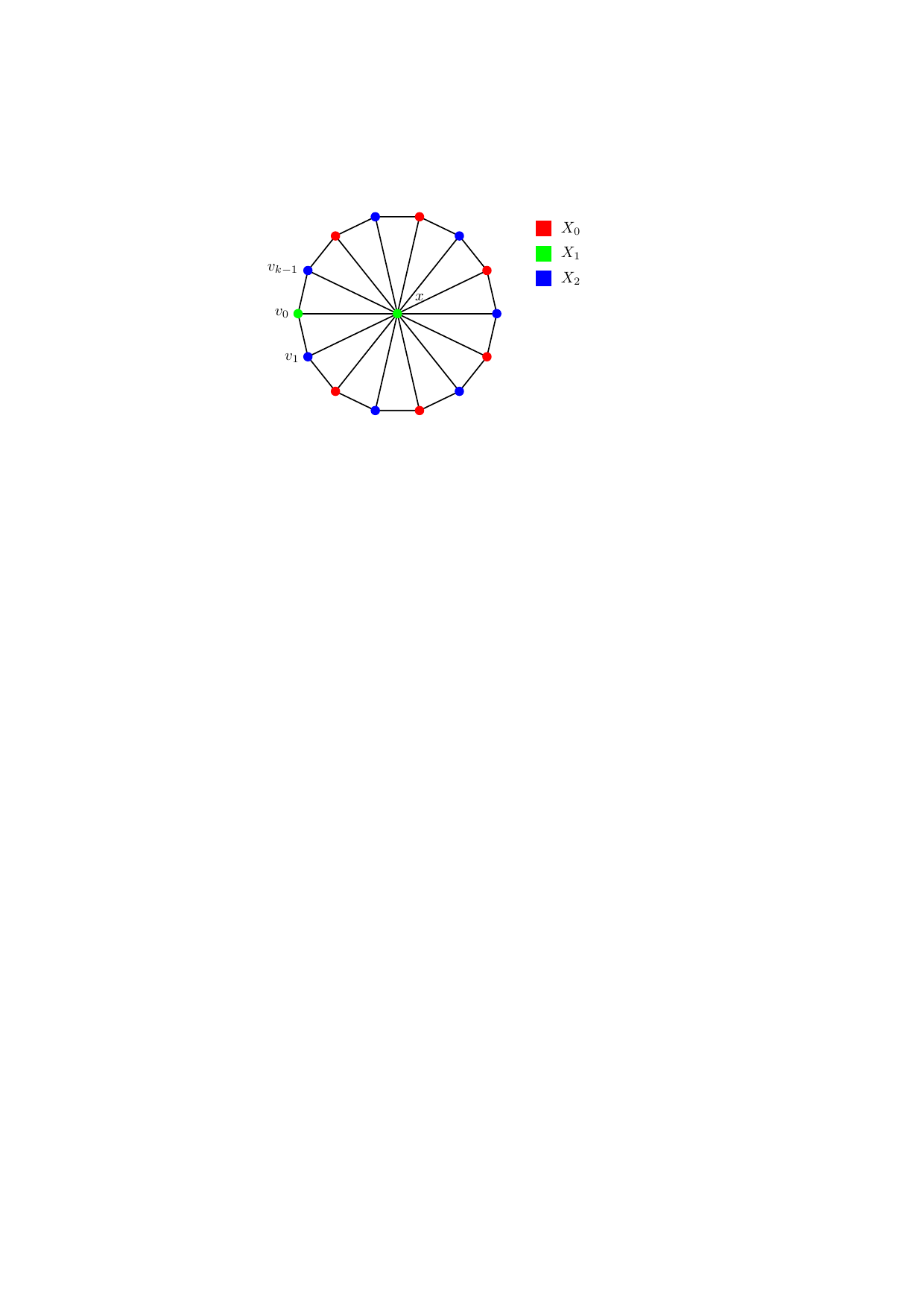}
  \caption{\cref{wheelie}: Partitioning the vertices of an even wheel into sets $X_0$, $X_1$, and $X_2$.}
  \label{even_wheel}
\end{figure}
\begin{proof}
  Label the vertices of $W_k$ as $v_0,\ldots,v_{k-1}$ so that $v=v_0$.  Then the sets $X_1:=\{v_0, x\}$, $X_2:=\{v_{2i-1}:i\in\{1,\ldots,k/2\}\}$, and $X_0:=\{v_{2i}:i\in\{1,\ldots, k/2-1\}\}$ satisfy the requirements of the lemma.
\end{proof}

The following lemma, illustrated in \cref{critical_colouring}, drops the requirement that the critical graph be biconnected and applies even if some of the biconnected components of $H$ are even wheels.

\begin{figure}
  \centering
  \includegraphics[page=2]{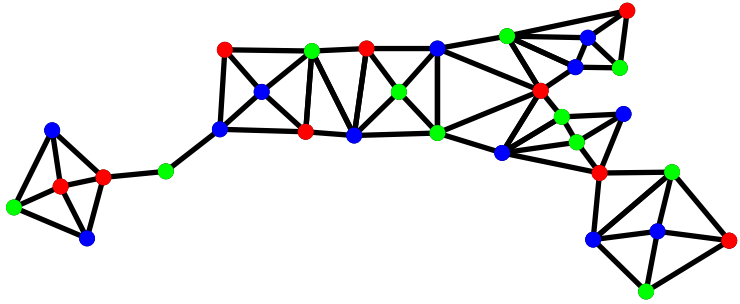}
  \caption{\cref{critical}: Partitioning the vertices of a connected critical graph into dominating sets $X_0$, $X_1$, and $X_2$.}
  \label{critical_colouring}
\end{figure}

\begin{lem}\label{critical}
  Let $H$ be a connected critical generalized near-triangulation with at least $3$ vertices, no vertices of degree $1$ and not isomorphic to $W_k$ for any even integer $k$.  Then there exists a partition $\{X_0,X_1,X_2\}$ of $V(H)$ such that
  \begin{compactenum}[(i)]
    \item for each edge $vw$ of $H[B(H)]$, $v\in X_i$ and $w\in X_j$ for some $i\neq j$;\label[p]{proper_2}
    \item for each $i\in\{0,1,2\}$, $X_i$ dominates $H$,  \label[p]{dominates_h_minus_l}
  \end{compactenum}
\end{lem}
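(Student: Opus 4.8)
The plan is to reduce \cref{critical} to the biconnected case handled by \cref{biconnected_critical,wheelie} by gluing together colorings of the biconnected components (blocks) along the block-cut tree. Since $H$ is connected and critical with no degree-$1$ vertices and at least $3$ vertices, each block is itself a critical generalized near-triangulation with at least $3$ vertices (a block cannot be a single edge, since that would create a degree-$1$ vertex somewhere or violate the near-triangulation structure). First I would set up the block-cut tree $T$ of $H$, rooted at an arbitrary block, and process the blocks in a BFS/DFS order so that each block is attached to the already-colored part through a single cut vertex.

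For the base case (the root block $B$), I would apply \cref{biconnected_critical} if $B$ is not an even wheel, obtaining a partition $\{X_0,X_1,X_2\}$ of $V(B)$ satisfying the proper-coloring property \pref{proper_2} and such that all three classes dominate $B$. If $B$ is an even wheel $W_k$, I would instead invoke \cref{wheelie} with $v$ chosen to be a cut vertex of $H$ lying on $B$ (or any boundary vertex if $B$ contains no cut vertex); this gives three classes where $X_1$ and $X_2$ dominate all of $B$ and $X_0$ dominates everything except the distinguished vertex $v$. The key point is that the one vertex $v$ that $X_0$ fails to dominate inside its own block can be chosen to be a cut vertex, so that $v$ will be dominated by vertices living in an adjacent block.

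For the inductive step, when attaching a new block $B'$ to the colored part at cut vertex $c$, I would apply the appropriate lemma (\cref{biconnected_critical} or \cref{wheelie}) to $B'$, then permute the three color labels of the new coloring so that the color $B'$ assigns to $c$ matches the color $c$ already has. This relabeling is always possible because both colorings are symmetric under permutation of the three classes, and it is the mechanism that makes the global coloring well-defined on the shared cut vertices. Whenever $B'$ is an even wheel, I would apply \cref{wheelie} with its distinguished undominated vertex set to a \emph{different} cut vertex of $B'$ (one joining $B'$ to a further block) rather than to $c$, ensuring every even-wheel block ``passes its deficiency downward'' to a vertex that some other block dominates; a leaf block of $T$ that happens to be an even wheel still has its cut vertex $c$ dominated from the parent side, so again no vertex is left undominated.

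The main obstacle is verifying \pref{dominates_h_minus_l}: that every class $X_i$ dominates all of $V(H)$. Within a single block this follows from the block lemmas, so the only vertices at risk are cut vertices and the at-most-one deficient vertex of each even-wheel block. I expect the crux to be a clean argument that each such vertex is dominated by \emph{every} one of the three classes after gluing. For a cut vertex $c$ shared by blocks $B_1,\ldots,B_m$, vertex $c$ is dominated by class $X_i$ as long as \emph{some} block containing $c$ has a neighbour of $c$ in $X_i$; since at least one block at $c$ is not an even wheel with $c$ as its deficient vertex (by the downward-passing choice above), all three classes reach $c$. The proper-coloring condition \pref{proper_2} is comparatively easy: it is a local condition on edges of $H[B(H)]$, each such edge lies in exactly one block, and the block lemmas already guarantee it there. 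I would close by noting the relabeling preserves \pref{proper_2} because relabeling is a global permutation of classes, which sends proper colorings to proper colorings.
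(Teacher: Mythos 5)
Your overall decomposition---peeling the block--cut tree and gluing the colourings of \cref{biconnected_critical} and \cref{wheelie} at cut vertices, with even wheels passing their single undominated vertex toward a neighbouring block---is essentially the strategy the paper uses (its induction repeatedly splits off a leaf block $H'$ at a cut vertex $v$ and combines the classes so that each deficient class is paired with a class that covers $v$ from the other side). However, there is a genuine gap in your parenthetical claim that a block cannot be a single edge. Generalized near-triangulations may contain bridges, and a connected critical one with minimum degree $2$ can too: take two triangles joined by an edge $bc$, or, worse, two triangles joined by a path $b,c,d$ consisting of two bridges. These graphs satisfy every hypothesis of \cref{critical}, yet they have blocks with only two vertices, to which neither \cref{biconnected_critical} nor \cref{wheelie} applies. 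For such blocks your scheme has no colouring rule at all, and no relabelling saves it: in the second example $N_H[c]=\{b,c,d\}$, so all three classes dominate $c$ only if $b$, $c$, $d$ receive three distinct colours, a constraint that couples two different blocks and is not enforced by merely matching the colour of each cut vertex. Your cut-vertex argument (``some block at $c$ is not an even wheel deficient at $c$, hence all three classes reach $c$'') silently assumes that every non-deficient block dominates its cut vertices in all three classes, which fails exactly for edge-blocks. The paper closes this hole with a dedicated case: when the residual graph has a degree-$1$ cut vertex it extracts the maximal path of degree-$2$ vertices hanging from it and assigns the path vertices to the classes cyclically modulo $3$, which is precisely what forces three consecutive vertices of such a chain into three distinct classes.

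A second, more minor, looseness: matching only the colour of the cut vertex $c$ leaves the identification of the remaining two classes ambiguous, and when both sides are deficient at $c$ you must pair each deficient class with a class that dominates $c$ from the other side (the paper does this explicitly, setting $X_0:=X_0'\cup X_2''$, $X_1:=X_1'\cup X_1''$, $X_2:=X_2'\cup X_0''$). This is fixable within your framework, but it needs to be said; the bridge issue above is the one that actually requires an additional idea.
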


\begin{proof}
  The proof is by induction on $|H|$.  First, suppose that $|H|=3$. Since $H$ is connected and has no vertices of degree $1$, $H$ is a triangle $v_0v_1v_2$. We take $X_i:=\{v_i\}$ for each $i\in\{0,1,2\}$.  Clearly these sets satisfy the requirements of the lemma.

  If $H$ is biconnected then, since $H$ is not an even wheel, we can immediately apply \cref{biconnected_critical} and we are done.  Otherwise, $H$ contains a cut vertex $v$ that separates $H$ into components $C_1,\ldots,C_k$ and such that $H':=H[V(C_1)\cup\{v\}]$ is biconnected. Refer to \cref{critical_3_colouring}. Since $L=\emptyset$, $H'$ has at least three vertices. If $H'$ is isomorphic to $W_k$ for some even integer $k$ then we apply \cref{wheelie} to $H'$ and $v$ to obtain sets $X_0'$, $X_1'$, and $X_2'$. Otherwise, we apply \cref{biconnected_critical} to $H'$ to obtain sets $X_0'$, $X_1'$, and $X_2'$.  In either case we may assume, without loss of generality that $v\in X_1'$, that $X_1'$ and $X_2'$ each dominate $H'$ and that $X_0'$ dominates $V(H')\setminus\{v\}$.

  \begin{figure}[htbp]
    \centering
    \begin{tabular}{cc}
      \includegraphics[page=1,trim={30 0 30 0},clip]{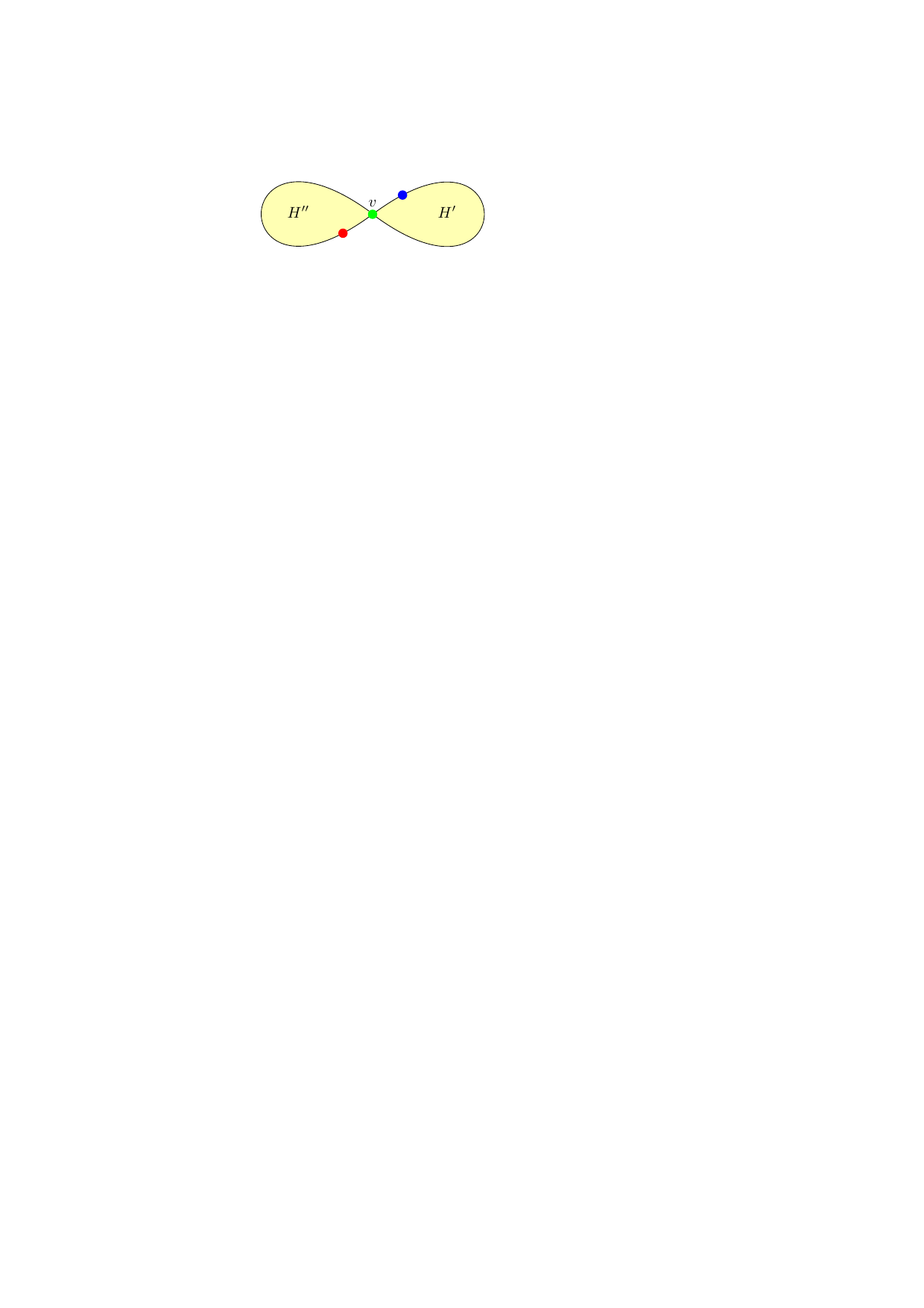} &
      \includegraphics[page=2]{figs/critical_3_colouring}
    \end{tabular}
    \caption{Two cases in the proof of \cref{critical}.}
    \label{critical_3_colouring}
  \end{figure}

  Let $H'':=H-V(C_1)$. First, suppose that $\deg_{H''}(v)>1$.
  If $H''$ is isomorphic to $W_k$ for some even integer $k$ then we apply \cref{wheelie} to $H''$ and $v$ to obtain sets $X_0''$, $X_1''$, $X_2''$. Otherwise, we apply the inductive hypothesis to $H''$ to obtain sets $X_0''$, $X_1''$, $X_2''$ that each dominate $H''$. In either case we may assume, without loss of generality (by renaming) that $v\in X_1''$, that $X_1''$ and $X_2''$ each dominate $H'$ and that $X_0''$ dominates $V(H'')\setminus\{v\}$.  Then the sets $X_0:=X_0'\cup X_2''$, $X_1:=X_1'\cup X_1''$ and $X_2:=X_2'\cup X_0''$ satisfy the requirements of the lemma.  (The only concern is whether each set dominates $v$, but this is guaranteed by the fact that $v\in X_1$, and that $X_2'\subseteq X_2$ and $X_2''\subseteq X_0$ each dominate $v$.)

  Finally, if $\deg_{H''}(v)=1$ then we consider the maximal path $v,v_1,v_2,\ldots,v_{r-1},v_r$ such that $\deg_{H''}(v_i)=2$ for each $i\in\{1,\ldots,r-1\}$.  Let $H''':=H''-\{v,v_1,\ldots,v_{r-1}\}$ and we treat $H'''$ exactly as we treated $H''$ in the previous paragraph to obtain sets $X_0'''$, $X_1'''$ and $X_2'''$.  Without loss of generality, we assume that $v_r\in X_{(r-1)\bmod 3}$, that $X_{(r-1)\bmod 3}$ and $X_{(r\bmod 3)}$ each dominate $H'''$ and that $X_{(r-2)\bmod 3}$ dominates  $V(H''')\setminus\{v_r\}$. Let $X_0'':=X_0'''\cup\{v_i:i\equiv 2\pmod 3\}$, $X_1'':=X_1'''\cup\{v\}\cup\{v_i:i\equiv 0\pmod 3\}$, and  $X_2'':=X_2'''\cup\{v_i:i\equiv 1\pmod 3\}$.  Then $v\in X_1''$, $X_1''$ and $X_2''$ each dominate $H''$, and $X_0''$ dominates $V(H'')\setminus\{v\}$.  We can now define the sets $X_0$, $X_1$, and $X_2$ exactly as we did in the previous paragraph.
\end{proof}

At last, the following lemma, illustrated in \cref{two_critical_colouring}, shows how we combine everything to find three sets whose total size is at most $2|B(H-B(H))| + |I(H-B(H))|$.

\begin{figure}
  \centering
  \includegraphics[page=4]{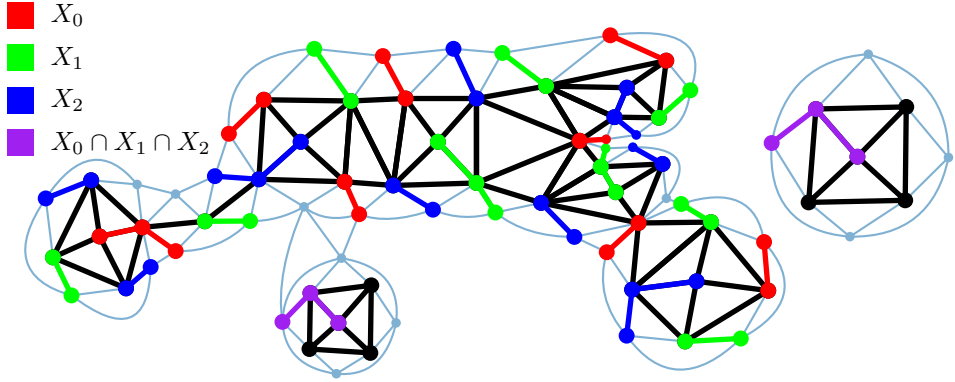}
  \caption{\cref{three_sets_coverage}: Finding three sets $X_0$, $X_1$, and $X_2$ that dominate $I(H)$ in a $2$-critical graph $H$.}
  \label{two_critical_colouring}
\end{figure}

\begin{lem}\label{three_sets_coverage}
  Let $H$ be a $2$-critical generalized near-triangulation.  Then there exists $X_0,X_1,X_2\subseteq V(H)$ such that
  \begin{compactenum}[(i)]
    \item $|X_0|+|X_1|+|X_2| \le 2|B(H-B(H))|+|I(H-B(H))|$; \label[p]{total_size}
    \item for each $i\in\{0,1,2\}$, $X_i$ dominates $I(H)$ in $H$; and \label[p]{dominates_i}
    \item for each $i\in\{0,1,2\}$, each component of $H[X_i]$ contains at least one vertex in $B(H)$. \label[p]{connectivity}
  \end{compactenum}
\end{lem}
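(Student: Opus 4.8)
The plan is to reduce the problem to the individual components of $H-B(H)$ and to treat those components independently, exploiting the fact that the target bound is additive and that the three sets are \emph{not} required to be disjoint. Writing $B':=B(H-B(H))$ and $I':=I(H-B(H))$, the components $C$ of $H-B(H)$ satisfy $\sum_C|B(C)|=|B'|$ and $\sum_C|V(C)|=|I(H)|=|B'|+|I'|$. So it suffices to produce, for each component $C$, three sets $X_0^C,X_1^C,X_2^C\subseteq V(H)$ whose sizes sum to at most $2|B(C)|+|I(C)|$, such that each $X_j^C$ dominates $V(C)$ in $H$ and each component of $H[X_j^C]$ meets $B(H)$; setting $X_j:=\bigcup_C X_j^C$ then gives the lemma, since both domination and the ``every component meets $B(H)$'' property survive these unions (taking a union can only merge components). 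The structural fact I will use about $H$ is that every $w\in B(C)$ has a neighbour in $B(H)$: an outer-face edge of $C$ incident to $w$ has both endpoints in $I(H)$, so it is not on the outer face of $H$ and its outer side bounds an inner triangle $w w' t$; here $t\notin V(C)$ (else that face would be interior to $C$) and $t$ lies in no other component (else two components of $H-B(H)$ would be joined), whence $t\in B(H)$.

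For a component $C$ that is not an even wheel I would start from the three-colouring supplied by \cref{critical}: since every vertex of $H-B(H)$ has degree at least $2$ (by $2$-criticality), $C$ is connected, has at least three vertices, has no degree-$1$ vertex, and is not an even wheel, so \cref{critical} yields a partition $X_0^C,X_1^C,X_2^C$ of $V(C)$ with each class dominating $C$ and proper on $B(C)$; these classes account for exactly $|V(C)|=|B(C)|+|I(C)|$ vertices and already deliver condition (ii). The crucial claim is that every component of each $H[X_j^C]$ contains a vertex of $B(C)$. This follows from the rigid structure of critical graphs: by \cref{critical_structure} every inner vertex $z$ of $C$ is adjacent to all vertices of the face of $C[B(C)]$ containing it, and the constructions underlying \cref{critical}, namely \cref{biconnected_critical,wheelie}, colour $z$ the same as one such boundary neighbour, so $z$ shares a component with a vertex of $B(C)$. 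As the components of $H[X_j^C]$ are vertex-disjoint and each contains a distinct vertex of $B(C)\cap X_j^C$, their number is at most $|B(C)\cap X_j^C|$, and summing over $j$ the total number of components is at most $|B(C)|$. I would then connect each component $K$ to the boundary by picking $w\in B(C)\cap K$ and adding to $X_j^C$ one neighbour of $w$ in $B(H)$; this adds at most $|B(C)|$ vertices overall, makes every component meet $B(H)$, and keeps the running total at most $|V(C)|+|B(C)|=2|B(C)|+|I(C)|$.

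An even wheel $C=W_k$ (with $k\ge 4$) is the single case where this partition-and-connect scheme overshoots, because \cref{wheelie} leaves one class failing to dominate its special vertex and the obvious repair costs an extra vertex. Here I would instead use the hub $x$, which dominates all of $C$ by itself: fixing a rim vertex $v_0$ and a neighbour $u\in B(H)$ of $v_0$, set $X_0^C=X_1^C=X_2^C=\{x,v_0,u\}$. Each of these dominates $V(C)$ (it contains $x$), induces a connected subgraph meeting $B(H)$ (through $u$), and the total size is $9\le 2k+1=2|B(C)|+|I(C)|$ precisely because $k\ge 4$.

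Finally I would assemble $X_j:=\bigcup_C X_j^C$ and verify the three conclusions: (i) holds since $\sum_j|X_j|\le\sum_C\sum_j|X_j^C|\le\sum_C\bigl(2|B(C)|+|I(C)|\bigr)=2|B'|+|I'|$; (ii) holds because each $X_j\supseteq X_j^C$ dominates $V(C)$ and $\bigcup_C V(C)=I(H)$; and (iii) holds because every component of $H[X_j]$ contains a component of some $H[X_j^C]$, which meets $B(H)$. I expect the main obstacle to be the claim that each colour class produced by \cref{critical} splits into components that each meet $B(C)$—this is exactly what pins the number of boundary connectors down to $|B(C)|$—together with the even-wheel case, where the naive bookkeeping is off by exactly one and the hub trick is what recovers the budget.
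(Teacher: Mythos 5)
Your proposal is correct and follows essentially the same route as the paper: split into components of $H-B(H)$, handle even wheels with the hub plus one boundary vertex plus one $B(H)$-neighbour (nine vertices against a budget of $2k+1\ge 9$), and handle the remaining components by applying \cref{critical} and then attaching at most $|B(C)|$ connectors from $B(H)$, for a total of $|V(C)|+|B(C)|=2|B(C)|+|I(C)|$. If anything you are slightly more careful than the paper on two points it leaves implicit — that every vertex of $B(C)$ has a neighbour in $B(H)$, and that components of $H[X_j^C]$ consisting only of inner vertices of $C$ cannot arise because the construction in \cref{biconnected_critical} colours each inner vertex the same as the boundary vertex it is contracted into.
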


\begin{proof}
  Let $\mathcal{C}$ be the set of components of $H-B(H)$
  and let $\mathcal{C}_{\boxtimes}$ be the set of components in $\mathcal{C}$  that are even wheels.

  For each component $C$ in $\mathcal{C}_{\boxtimes}$ we choose the vertex $x$ that dominates $C$, some vertex $w$ in $B(C)$ and some vertex $v\in B(H)$ adjacent to $w$.  We add $\{v,w,x\}$ to each of $X_0$, $X_1$, and $X_2$. The vertex $x$ ensures that each $X_i$ dominates $C$ and the vertices $v$ and $w$ ensure that the component of $H[X_i]$ that contains $x$ contains at least one vertex in $B(H)$.  Doing this for every component in $\mathcal{C}_{\boxtimes}$ contributes a total of $9|\mathcal{C}_{\boxplus}|$ vertices to $X_0$, $X_1$, and $X_2$. On the other hand, $|B(C)|\ge 4$ and $|I(C)|\ge 1$ for each $C\in \mathcal{C}_{\boxplus}$, so $\sum_{C\in \mathcal{C}_{\boxplus}} (2|B(C)| + I(C))\ge (2\cdot 4+1)|\mathcal{C}_{\boxplus}| = 9|\mathcal{C}_{\boxplus}|$.

  For each component $C$ in $\mathcal{C}\setminus\mathcal{C}_{\boxplus}$, we apply \cref{critical} to obtain sets $X_0'$, $X_1'$, $X_2'$. For each $i\in\{0,1,2\}$ and each $w\in X_i'\cap B(H-B(H))$ we choose a vertex $v\in B(H)$ adjacent to $w$ and add both $v$ and $w$ to $X_i$. \Cref{critical} ensures that each $X_i$ dominates $C$ and the vertex $v$ ensures that the component of $H[X_i]$ that contains $w$ contains at least one vertex of $B(H)$.
  Doing this for each component $\mathcal{C}\setminus\mathcal{C}_\boxplus$ contributes a total of at most  $\sum_{C\in\mathcal{C}\setminus\mathcal{C}_{\boxplus}}(|C|+|B(C)|)=\sum_{C\in\mathcal{C}\setminus \mathcal{C}_{\boxplus})}(2|B(C)|+|I(C)|)$ to $X_0$, $X_1$, and $X_2$.

  The resulting sets $X_1$, $X_2$, and $X_3$ each dominate $\bigcup_{C\in\mathcal{C}} V(C)=I(H)$ and have total size at most $\sum_{C\in\mathcal{C}} (2|B(C)|+|I(C)|) = 2|B(H-B(H))| + |I(H-B(H))|$.
\end{proof}

\begin{proof}[Proof of \cref{two_critical_handler}]
  Take $X$ to be the smallest of the three sets $X_0$, $X_1$, and $X_2$ guaranteed by \cref{three_sets_coverage}.
\end{proof}

\subsection{The Algorithm}

All of this has been leading up to a variant  $\textsc{SimpleGreedy}(G)$ that we call $\textsc{BetterGreedy}(G)$.  Suppose we have already chosen $\Delta_0,\ldots,\Delta_{i-1}$ for some $i\ge 0$ and we now want to choose $\Delta_i$.  Let $X_i:=\bigcup_{j=0}^{i-1}\Delta_j$, let $G_i$ be a dom-minimal graph that dom-respects $G-X_i$, and let $v_i$ be a vertex in $B(G_i)$ that maximizes $\deg^+_{G_i}(v_i)$.  During iteration $i\ge 0$, there are now more cases to consider:
\begin{compactenum}[{[}bg1{]}]
    \item If $\deg^+_{G_i}(v_i)\ge 3$ then we set $\Delta_i\gets\{v_i\}$.
    \label[bg]{bg_high_degree}
    \item Otherwise, if $G_i-B(G_i)$ contains a vertex of degree $1$ we set $\Delta_i:=\{v_i\}$ where $v_i$ is the vertex $v$ guaranteed by \cref{leaf_killer}.\label[bg]{bg_leaf}
    \item Otherwise, if $G_i-B(G_i)$ contains a vertex of degree $0$ we set $\Delta_i:=\{v_i\}$ where $v_i$ is the vertex $v$ guaranteed by \cref{degree_zero_killer}.\label[bg]{bg_degree_zero}
    \item Otherwise, if there exists distinct $u,w\in B(G_i)$ and $w\in B(G_i-B(G_i))$ such that $\deg^+_{G_i}(v)=2$, $\deg^+_{G_i-v}(w)\ge 3$, and $N^+_{G_i}(u)\subseteq N_{G_i}(w)$ then set $\Delta_i:=\{v,w\}$.
    \label[bg]{bg_two_three}
    \item Otherwise, $G_i$ is $2$-critical and $i+1=r$.  By \cref{two_critical_handler}, there exists $\Delta_{r-1}\subseteq V(G_i)$ of size at most $2|B(G_i-B(G_i))|/3 + |I(G_i-B(G_i))|/3$ that dominates $I(G_i)$.
    \label[bg]{bg_two_critical}
\end{compactenum}

\begin{thm}\label{better_greedy}
  When applied to an $n$-vertex triangulation $G$,  $\textsc{BetterGreedy}(G)$ produces a connected dominating set $X_r$ of size at most $(10n-18)/21$.
\end{thm}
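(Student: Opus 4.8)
The plan is to follow the template of \cref{simple_greedy}: first show that $X_r$ is a connected dominating set of $G$, and then bound $|X_r|$ by solving a small linear program whose variables count the iterations of each type together with the number $k$ of vertices left undominated when the terminal $2$-critical graph is reached. Correctness would mirror \cref{outer_domatic}. In iterations of type \cref{bg_high_degree,bg_leaf,bg_degree_zero} the batch $\Delta_i$ is a single vertex of $B(G_i)\subseteq B(G-X_i)$, and in \cref{bg_two_three} the batch is $\{v,w\}$ with $v\in B(G_i)$ and $w$ an inner vertex joined to $v$ by an edge; in every case the new batch attaches to the previously built $G[X_i]$ through a vertex of $B(G-X_i)$, which is adjacent to $X_i$. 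Because each $G_i$ dom-respects $G-X_i$, a set dominating $I(G_i)$ in $G_i$ also dominates $I(G-X_i)=I(G_i)$ in $G$, so an induction on $i$ (as in \cref{outer_domatic}) shows $G[X_i]$ is connected throughout and that $X_r$ dominates $G$; the terminal batch from \cref{bg_two_critical} is connected to $X_{r-1}$ by the final clause of \cref{two_critical_handler}.

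For the size bound I would track three quantities through the iterations: $|X_i|$, the number $|I(G_i)|$ of undominated vertices, and the order $|V(G_i)|$ of the current graph. Let $p$, $q$, and $s$ count the iterations of type \cref{bg_high_degree}, of type \cref{bg_two_three}, and of types \cref{bg_leaf,bg_degree_zero}, so that $|X_{r-1}|=p+2q+s$, and put $k:=|I(G_{r-1})|$ and $\beta:=|B(G_{r-1})|=|V(G_{r-1})|-k$. The cited lemmas supply the per-iteration changes: by the second case of \cref{really_good_cor} a type-\cref{bg_high_degree} step adds one vertex, lowers $|I(G_i)|$ by at least $3$, and lowers $|V(G_i)|$ by at least $1$; by the third case a type-\cref{bg_two_three} step adds two vertices, lowers $|I(G_i)|$ by exactly $5$ and $|V(G_i)|$ by exactly $3$ (since $|B(H')|=|B(H)|+2$ and $|H'|=|H|-3$ force $|I(H')|=|I(H)|-5$); and by \cref{leaf_killer,degree_zero_killer} each cleanup step adds one vertex while lowering $|V(G_i)|$ by at least $3$ and $|I(G_i)|$ by at least $1$.

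Summing these changes yields two resource constraints together with the buffer inequality for the terminal graph. Tracking undominated vertices gives $3p+5q+s+k\le n-3$, and tracking the order of the graph gives $\beta+k=|V(G_{r-1})|\le n-p-3q-3s$; meanwhile \cref{two_critical_boundary_size} and \cref{base_case} applied to the critical graph $G_{r-1}-B(G_{r-1})$ give $\beta\ge |B(G_{r-1}-B(G_{r-1}))|\ge 3k/4$, while \cref{two_critical_handler} bounds the terminal batch by $(|B(G_{r-1}-B(G_{r-1}))|+k)/3\le(\beta+k)/3$. Maximizing the objective $|X_r|=p+2q+s+(\beta+k)/3$ over this linear program, the optimum is the pure-combo corner $p=s=0$, $q=(n-6)/7$, $k=(2n+9)/7$ (at which $\beta=k$ and both resource constraints are tight), which gives $|X_r|\le(10n-18)/21$.

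The hard part is the bookkeeping that justifies these constraints. First, repeated dom-minimalization prunes additional vertices, so the order of the graph may drop by more than the stated amounts; I would argue this is harmless, since extra pruning only decreases $|V(G_{r-1})|$ and hence $\beta$, which can only tighten the buffer and lower the objective. Second, the cleanup steps \cref{bg_leaf,bg_degree_zero} have a poor ratio of vertices-added to vertices-dominated, and the reason they do not hurt the bound is precisely that they shrink the graph by at least $3$ per added vertex; verifying that the resulting LP corner is dominated by the all-combo corner (rather than by a cleanup-heavy or a high-degree-heavy strategy) is the crux of the calculation, and the place where the constant $10/21$ and the additive $-18$ are pinned down.
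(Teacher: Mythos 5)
Your overall architecture (correctness via the outer-domatic framework, size via a small LP over step types) matches the paper's, and your per-step accounting of $|X_i|$ and of the number of dominated vertices reproduces the paper's constraint \cref{dd_size}. But the second resource you track --- the order $|V(G_i)|$ of the current graph --- is not the right potential, and as a consequence your LP does not have the optimum you claim. Concretely, consider the point $p=(n-3)/3$, $q=s=k=0$, $\beta=(2n+3)/3$: it satisfies both of your constraints ($3p+5q+s+k=n-3$ and $p+3q+3s+\beta+k=n$) as well as $\beta\ge 3k/4$, yet your objective $p+2q+s+(\beta+k)/3$ evaluates there to $(5n-6)/9>(10n-18)/21$. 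So the ``high-degree-heavy'' corner beats your ``pure-combo'' corner, and the LP you wrote down only certifies a bound of roughly $0.556n$. The culprit is the terminal charge: you relax $(2|R|+|S|)/3$ (where $R:=B(G_{r-1}-B(G_{r-1}))$, $S:=I(G_{r-1}-B(G_{r-1}))$) to $(\beta+k)/3$ with $\beta=|B(G_{r-1})|$. Since $B(G_{r-1})$ consists of \emph{already dominated} vertices, this charge does not vanish when $k=|I(G_{r-1})|=0$, so a strategy that dominates everything with degree-$3$ steps still gets billed about $n/3$ extra vertices it never has to pay. Replacing the terminal charge by the honest $2k/3$ does not rescue the argument either: one can check that the resulting LP still has optimum about $(34n-54)/69\approx 0.493n$, strictly above $10/21$.

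What is missing is the paper's frontier accounting. The proof of \cref{better_greedy} tracks $B_i:=N_G(I_i)$, the set of dominated vertices that still have an undominated neighbour, and records that a step which dominates $t$ new vertices increases this frontier by at most $t-1$ (and that the cleanup steps of \cref{bg_leaf,bg_degree_zero} actually \emph{shrink} it, by at least $3$ and at least $1$ respectively --- a distinction between the two cleanup types that your single counter $s$ erases and that the paper needs). The payoff is the inequality $|B_{r-1}|\ge|R|$ from \cref{two_critical_boundary_size}, which caps the expensive $2|R|/3$ part of the terminal batch by a quantity that grows by only $2$ per combo step while that step dominates $5$ vertices; this is exactly what makes the combo corner optimal and pins down $10/21$. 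Your constraint on $|V(G_i)|$ carries no analogous information, because a degree-$3$ step removes only one vertex from the graph while potentially pushing two new vertices onto the frontier. To repair the proof you would need to replace your $|V(G_i)|$ constraint with the paper's bound \cref{bb_size} on $|B_{r-1}|$ (or an equivalent), keep the terminal charge in the split form $2|R|/3+|S|/3$ with the separate constraint $|R|\le|B_{r-1}|$, and separate the two cleanup cases.
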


\begin{proof}
  By \cref{leaf_killer,degree_zero_killer,really_good} during each of the first $r-1$ steps, one of the following occurs:
  \begin{compactitem}
    \item[$x_t$:] For some $t\ge 3$, we can add a single vertex $v_i$ that increases the size of the dominated set $D_{i+1}:=N[X_{i+1}]$ by $t$ and increases the size of the boundary set $B_{i+1}:=N_G(I(G-D_{i+1}))$ by at most $t-1$.
    \item[$a$:] We can add a vertex $v_i$ that increases the size of the dominated set $D_{i+1}$ by $2$ and \emph{decreases} the size of the boundary set $B_{i+1}$ by at least $1$.
    \item[$b$:] We can add a vertex $v_i$ that increases the size of the dominated set $D_{i+1}$ by $1$ and \emph{decreases} the size of the boundary set $B_{i+1}$ by at least $3$.
    \item[$c$:] We can add a pair of vertices $\{v_i,w_i\}$ that increase the size of the dominated set $D_{i+1}$ by at least $5$ and increases the size of the boundary set $B_{i+1}$ by at most $2$.
    \item[$\bullet$:] We can directly complete the connected dominating set $X_r=X_{i+1}$ by adding a set $\Delta_{r-1}=\Delta_i$ of at most $(2|B(G_{i}-B(G_i))|+|I(G_i-B(G_i))|)/3$ additional vertices where, as before $G_i:=G[B_i\cup (V(G)\setminus D_i)]$ and $r=i+1$.
  \end{compactitem}

  Refer to \cref{dbrs}. Let $a$, $b$, $c$, and $\langle x_t\rangle_{t\ge 3}$ denote the number of times each of these cases occurs in the first $r-1$ steps, and let $D:=D_{r-1}$, $B:=B_{r-1}$, and $X:=X_{r-1}$.  Then,
  \begin{align}
    |D| & \ge 3 + \sum_{t\ge 3}tx_t + 2a + b + 5c \label{dd_size} \\
    |B| & \le 3 + \sum_{t\ge 3}(t-1)x_t - a - 3b + 2c \enspace \label{bb_size} \\
    |X| & \le \sum_{t\ge 3}x_t + a + b + 2c \label{x_size} \enspace .
  \end{align}

  \begin{figure}
      \centering
      \includegraphics{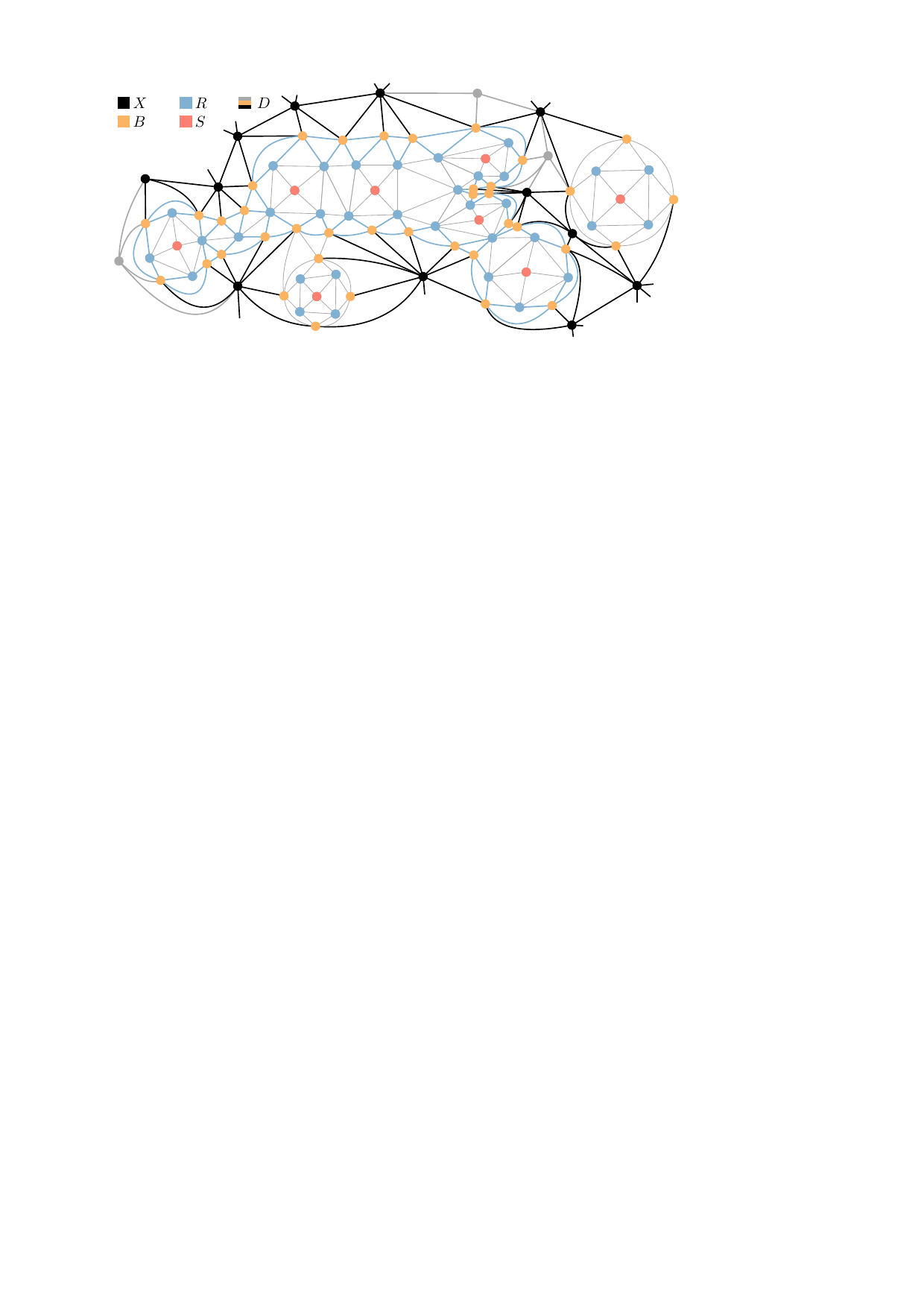}
          \caption{The sets $X$, $D$, $B$, $R$, and $S$.}
      \label{dbrs}
  \end{figure}

  Let $R:=B(G_{r-1}-B(G_{r-1}))$ and $S:=I(G_{r-1}-B(G_{r-1}))$.  Since $\{D,R,S\}$ is a partition of $V(G)$,
  \begin{equation}
    |D|+|R|+|S|= n \enspace . \label{ttotal_size}
  \end{equation}
  By \cref{two_critical_boundary_size}, $|B_{r-1}|\ge |B(G_i-B(G_i))|$, i.e., $|B|\ge |R|$.  Putting everything together we get the constraints:
  \begin{align}
        3 + \sum_{t\ge 3}tx_t + 2a + b + 5c + |R| + |S| \le n&
          & \text{(by \cref{dd_size} and \cref{ttotal_size})}  \label{a} \\
        3 + \sum_{t\ge 3}(t-1)x_t - a - 3b + 2c \ge |R|
          & & \text{(by \cref{bb_size} and since $|B|\ge |R|$)} \label{b} \\
  \end{align}
  with all values non-negative.  The size of the final connected dominating set $X_r$ is then at most
  \begin{equation}
    |X_r| = |X| + |\Delta_{r-1}| \le \sum_{t\ge 3}x_t + a + b + 2c + 2|R|/3 + |S|/3  \enspace . \label{oobjective}
  \end{equation}

  \begin{clm}\label{simplify}
    If $(a,b,c,|R|,|S|,x_3,x_4,\ldots)$ are non-negative and satisfy \cref{a,b}, then setting $x_3\gets x_3+\sum_{t\ge 4}(t-1)x_t/3$ and $x_t\gets 0$ for all $t\ge 4$ also satisfy \cref{a,b} and do not decrease \cref{oobjective}.
  \end{clm}

  \begin{clmproof}
    Suppose $x_t>0$ for some integer $t\ge 4$, otherwise there is nothing to prove.  Let $i:=\min\{t\ge 4: x_t>0\}$ and set $x_3\gets x_3+(t-1)x_t/3$ and $x_t\gets 0$. This change causes the left-hand-side of \cref{a} to decrease by $x_t$. This change does not affect the left-hand-side of \cref{b}.  This change increases the value of \cref{oobjective} by $(t-1)x_t/3-x_t\ge 0$.
  \end{clmproof}

  By \cref{simplify}, maximizing \cref{oobjective} subject to the constraints given by \cref{a,b} is a linear program in six variables $(x_3,a,b,c,|R|,|S|)$ which can be done easily.  The maximum is achieved when $x_3=a=b=c=|S|=0$, $c=(n-6)/7$ and $|R|=(2n+9)/7$, at which point \cref{oobjective} evaluates to $(10n-18)/21$.
\end{proof}

\Cref{better_greedy} establishes the combinatorial result in \cref{main_result2} and the following theorem establishes the algorithmic result.

\begin{thm}
    There exists a linear-time algorithm that implements $\textsc{BetterGreedy}(G)$.
\end{thm}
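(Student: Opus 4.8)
The plan is to argue that every primitive operation performed by $\textsc{BetterGreedy}$ runs in amortized constant time per unit of progress, so that the total running time is linear in $n$. The key structural fact enabling this is that each iteration either removes at least one vertex from the working graph (cases \cref{bg_high_degree,bg_leaf,bg_degree_zero}), removes at least three vertices (case \cref{bg_two_three}), or terminates the algorithm (case \cref{bg_two_critical}). Since the working graph only shrinks and never grows back, the total number of vertices and edges examined over the entire execution is bounded by a charging argument against the $O(n)$ vertices and edges of the original triangulation $G$. First I would fix a suitable dynamic representation of the current graph $G_i$ — a doubly-connected edge list (DCEL) or an equivalent rotation-system encoding — that supports $O(1)$-time vertex deletion, edge deletion, and local face-rewiring, together with an auxiliary record at each boundary vertex storing its inner-degree $\deg^+_{G_i}(v)$.

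**Maintaining the dom-minimal, dom-respecting graph incrementally.** The main subtlety is that the algorithm does not literally recompute a dom-minimal graph from scratch via \cref{dom_minimal} at each step; doing so naively would cost $\Omega(n)$ per iteration. Instead I would maintain the invariant that $G_i$ is already dom-minimal and show that each of the five operations disturbs dom-minimality only in a bounded neighbourhood of the deleted vertices. Concretely, deleting a vertex $v$ (and possibly $w$, $v_j$) can create new boundary vertices of inner-degree $0$ or $1$, new bad edges, or new $K_4$-violations, but only among the former neighbours of the deleted vertices. The cleanup reductions of \cref{dom_minimal} — deleting inner-degree-$0$ boundary vertices, deleting bad edges, and contracting inner-degree-$1$ configurations — can then be applied locally, and I would charge each such cleanup operation to the vertex or edge it removes, so that the total cleanup work across all iterations is $O(n)$.

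**Locating the next operation quickly.** To avoid rescanning the boundary for $v_i$ at each step, I would maintain the boundary vertices in buckets indexed by inner-degree (values $0$, $1$, $2$, $\ge 3$), updating a vertex's bucket in $O(1)$ whenever its inner-degree changes, which happens only $O(1)$ times per local cleanup. A vertex of inner-degree $\ge 3$ triggers \cref{bg_high_degree} immediately; the degree-$1$ and degree-$0$ tests for $G_i-B(G_i)$ (cases \cref{bg_leaf,bg_degree_zero}) are detected by maintaining the degrees of inner vertices in $G_i - B(G_i)$ in analogous buckets; and the $2$–$3$ combo of \cref{bg_two_three} is found by the constructive witness in the proof of \cref{really_good}, which only inspects a bounded neighbourhood of a low-degree inner vertex $w$. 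Thus identifying which case applies and the associated vertices is $O(1)$ work plus the local cleanup already accounted for.

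**The main obstacle and the final step.** I expect the hardest part to be the final case \cref{bg_two_critical}: once $G_{r-1}$ is $2$-critical, the algorithm must compute the three dominating sets of \cref{three_sets_coverage} and take the smallest, which involves the recursive $3$-colouring of each critical component from \cref{biconnected_critical,wheelie,critical}. I would argue that this whole terminal computation is linear in $|G_{r-1}|$: the induction in \cref{critical} decomposes $H$ along cut vertices and degree-$2$ paths, each decomposition step being chargeable to the vertices it consumes, and the biconnected base case \cref{biconnected_critical} performs a sequence of edge contractions that can be realized on the DCEL in total time proportional to the component size. Since $|G_{r-1}| \le n$, this terminal step costs $O(n)$. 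Combining the amortized $O(1)$ cost per incremental step (of which there are $O(n)$, since each consumes at least one vertex) with the $O(n)$ preprocessing to build the initial dom-minimal graph and the $O(n)$ terminal step yields the claimed linear total running time.
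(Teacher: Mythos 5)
Your proposal is correct and follows essentially the same route as the paper's proof: a DCEL for the embedded graph, degree-indexed bucket structures for boundary vertices and for the vertices of $G_i-B(G_i)$, local (amortized-constant) restoration of dom-minimality charged to deleted vertices and edges, and a linear-time terminal pass for the $2$-critical case via the block/cut-vertex decomposition and outerplanar $3$-colouring. The only cosmetic difference is that the paper maintains a third explicit bucket structure keyed on $\deg^+_{G_i-B(G_i)}(w)$ to locate the vertex $w$ of the $2$--$3$ combo, whereas you locate it by direct inspection from the inner-vertex buckets; both are fine.
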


\begin{proof}
    The techniques needed to implement $\textsc{BetterGreedy}(G)$ in linear time are fairly standard for algorithms on embedded graphs, so we only sketch the main tools used.  There are three main tasks performed by $\textsc{BetterGreedy}(G)$:
    \begin{compactenum}
        \item Identify $\Delta_i$, which is either the single vertex $v_i$ from \cref{bg_high_degree,bg_leaf,bg_degree_zero} or the vertices $u$, $v$, and $w$ from \cref{bg_two_three}.
        \item Compute a dom-minimal graph $G_{i+1}$ that dom-respects $G_i-\Delta_i$.
        \item When $G_i$ is $2$-critical, $i=r-1$ and we must find a set $\Delta_{r-1}$ of size at most $2(|B(G_i-B(G_i))| + |I(G_i-B(G_i))|)$ that dominates $I(G_i)$.
    \end{compactenum}

    For an efficient implementation, the triangulation $G$ should be stored in some data structure for storing embedded planar graphs that allows the removal of edges (given a pointer) in constant time. For example, a doubly-connected edge-list \cite{muller.preparata:finding} is sufficient.

    The other main data structuring tool used to accomplish these steps efficiently is a technique for storing a sorted list of counters.  Each vertex $v$ of $G$ maintains a counter $\delta_i(v):=|N_G(v)\cap I(G_i)|$.  The vertices in $B(G_i)$ are kept in a doubly list of lists $\beta$.  Each item in $\beta$ is itself a doubly-linked list, called a \defin{bucket} that stores a non-empty set $\beta_d:=\{v\in B(G_i):\delta_i(v)=d\}$ for some specific value of $d$. Then $\beta$ itself is a doubly-linked list that stores the buckets by increasing value of $d$.  A similar structure $\zeta$ is used to store the vertices of $B(G_i-B(G_i))$ ordered by $\delta_i(v)$.  Note that, for $v\in B(G_i)$, $\delta_i(v)=\deg^+_{G_i}(v)$ and that, for $w\in B(G_i-B(G_i))$, $\delta_i(w)=\deg_{G_i-B(G_i)}(w)$.  Finally, a third structure $\Phi$ is used to store the vertices of $B(G_i-B(G_i))$ where the counter for each $w\in B(G_i-B(G_i))$ is equal to $\deg^+_{G_i-B(G_i)}(w)$.
    
    Since the value of $\delta_i(v)$ and $\deg^+_{G_i-B(G_i)}(w)$ decreases monotonically as $i$ increases, $\sum_{v\in V(G)}\delta_0(v)<6(n-2)$, and each vertex enters and leaves each of $\beta$, $\zeta$, and $\Phi$ at most once, it is straightforward to maintain $\beta$, $\zeta$, and $\Phi$ so that all operations on them take a total of $O(n)$ time over the entire execution of the algorithm. (When a vertex $v$ enters $B(G_i)$ (i.e., $\beta$) or $B(G-B(G_i))$ (i.e., $\zeta$ and $\Phi$) for the first time, it can be inserted in $O(\delta_i(v))=O(\deg_G(v))$ time.) From this point on, we will no longer discuss the maintenance of these lists, but we will use them to identify the vertex $v_i$, or the vertices $u,v,w$ when needed.

    \paragraph{Identifying $\Delta_i$:}
    To identify the set $\Delta_i$, we use $\beta$ to find the vertex $v_i\in B(G_i)$ that maximizes $\deg^+_{G_i}(v_i)$.  If $\deg^+_{G_i}(v_i)\ge 3$ then \cref{bg_high_degree} applies and there is nothing more to do.   Otherwise, we use $\zeta$ to identify the vertex $x\in B(G_i-B(G_i))$ that maximizes $\deg_{G_i-B(G_i)}(x)$.  If $\deg_{G_i-B(G_i)}(x)=1$ then \cref{bg_leaf} applies and the vertex $v_i\in N_{G_i}(x)$ can be found in $O(\deg_G(x))$ time (this is the vertex $v_1$ in \cref{killing_a_leaf}).  If $\deg_{G_i-B(G_i)}(x)=0$ then \cref{bg_degree_zero} applies.  The vertex $v_i\in N_{G_i}(x)$ can be found in $O(\deg_G(x))$ time (this is the vertex $v$ in \cref{isolated_fig}).  If $\deg_{G_i-B(G_i)}(x)\ge 2$, then we use $\Phi$ to identify the vertex $w\in B(G_i-B(G_i))$ that maximizes $\deg^+_{G_i-B(G_i)}(w)$.  If  $\deg^+_{G_i-B(G_i)}(w)\ge 2$ then this vertex can be used as the vertex $w$ in \cref{bg_two_three}.  In this case, the vertices $u$ and $v$ can be found in $N_{G_i}(w)$ in $O(\deg_G(w))$ time.

\paragraph{Computing $G_{i+1}$:}  After computing $\Delta_i$, we must compute a dom-minimal graph $G_{i+1}$ that dom-respects $G_i-\Delta_i$.  Since $G_i$ was dom-minimal, the only violations of dom-minimality occur at vertices incident to vertices in $N^+_{G_i}(\Delta_i)$ and at edges incident to faces with a vertex in $N^+_{G_i}(\Delta_i)$.  In particular, violations of \cref{bad_vertex,inner_degree_1} can be detected when adjusting the counters of vertices adjacent to vertices in $N^+_{G_i}(\Delta_i)$.  Violations of \cref{bad_edge} can be detected by examining the inner faces incident to each vertex in $N^+_{G_i}(\Delta_i)$.  Fixing these violations involves removing a vertex of inner-degree $0$ \cref{bad_vertex}, removing two edges incident to a vertex \cref{bad_vertex}, removing two edges incident to a vertex and replacing them with a single edge \cref{inner_degree_1}, or removing a single edge \cref{bad_edge}.

\paragraph{Handling the $2$-critical case:}
When none of \cref{bg_high_degree,bg_two_three,bg_degree_zero,bg_leaf} apply, $G_i$ is $2$-critical.  In this case, we find the set $X$ guaranteed by \cref{two_critical_handler} by finding the sets $X_0$, $X_1$, and $X_2$ described in \cref{three_sets_coverage} and using the smallest of these.  To do this we first compute the critical generalized near-triangulation $H:=G_i-B(G_i)$ and consider each of its components separately.  For a component $C$ of $H$, it is easy to check in linear time if $C$ is an even wheel.  For a component $C$ of $G_i-B(G_i)$ that is not an even wheel, \cref{critical} applies.  In this case, we first compute the block-cut tree of $C$ using the algorithm of \citet{hopcroft.tarjan:efficient}, which implicitly identifies the biconnected components of $C$.

If $C$ is biconnected then \cref{biconnected_critical} applies.  The proof of \cref{biconnected_critical} is by induction on the number of inner vertices of $C$.  In the base case $C$ is an edge-maximal outerplanar graph and the the partition of $V(C)$ into $\{X_0,X_1,X_2\}$ is obtained by properly $3$-colouring $C$, which is easily done using a linear-time greedy algorithm. If $C$ is not outerplanar, then we contract each inner vertex $x$ of $C$ into one of its neighbours $v$ on the outer face of $C$ (\cref{contraction_proof}).  Choosing $v$ can be done in $O(\deg_C(x))$ time using any neighbour of degree at at least $4$.  Once we have done this for each inner vertex $x$, the resulting graph is outerplanar and we use the $3$-colouring procedure from the previous paragraph.  Each contracted inner vertex $x$ is then placed into the  same set as the vertex $v$ into which $x$ was contracted.

If $C$ is not biconnected, then we let $H'$ be a biconnected component of $C$ that corresponds to a leaf in the block-cut tree for $C$.  (This is the same graph $H'$ described in the proof of \cref{critical}.)  Since $C$ has no vertices of degree less than $2$, $H'$ is biconnected and has exactly one vertex $v$ in common with other biconnected components of $C$.  We then follow the procedure outlined in the proof of \cref{critical} and illustrated in \cref{critical_3_colouring}, which involves splitting $C$ into two subproblems (one of which is $H'$ the other of which is $H''$ or $H'''$).  The solution for $H'$ is obtained using the procedure for biconnected graphs described in the previous paragraph.  The other problem ($H''$ or $H'''$, which has fewer biconnected components than $C$) is solved recursively.  The sets generated in the solution for $H'$ are then renamed and merged with the sets obtained in the solution to the other problem.  Again, this is easily accomplished in linear time.

Now we have computed a partition $\{X^C_0,X^C_1,X^C_2\}$ of $V(C)$, for each component $C$ of $H$ that is not an even wheel.  Finally, we use this to define the sets $\{X_0,X_1,X_2\}$ as described in the proof of \cref{three_sets_coverage}.  (This is where we deal with components of $H$ that are even wheels.)
\end{proof}

\section{Connected Dominating Set for Surface Triangulations}
\label{bounded_genus}
In this section, we establish \cref{genus_result}, the extension of \cref{main_result2} to surface triangulations of genus $g = o(n)$.
Briefly, a \defin{surface or 2-manifold} $\mathcal{S}$ is a compact connected Hausdorff topological space such that every point in $\mathcal{S}$ is locally homeomorphic to the plane i.e. it has a neighbourhood homeomorphic to $\mathbb{R}^2$. Every such surface can be created from the sphere $\mathbb{S}^2$, by adding handles and cross-caps. The \defin{Euler genus} of a surface with $h$ handles and $c$ cross-caps is $2h+c$.

We follow the definitions given in \citet[Appendix~B]{diestel:graph}. An \defin{arc}, a \defin{circle}, and a \defin{disc} in a surface $\mathcal{S}$, is a subsets of $\mathcal{S}$ that is homeomorphic to $[0, 1]$, to a unit circle $\mathbb{S}^1 = \{x \in \mathbb{R}^1: ||x|| = 1\}$, or to a unit disc $\mathbb{B}^2 = \{x \in \mathbb{R}^2 : ||x|| < 1\}$, respectively.
The set of all arcs in $\mathcal{S}$ is denoted by $A_\mathcal{S}$.
An \defin{embedding} of a graph $G$ in a surface $\mathcal{S}$ is a map \defin{$\sigma: V(G)\cup E(G) \longrightarrow \mathcal{S}\cup A_\mathcal{S}$} that sends vertices of $G$ to distinct points in $\mathcal{S}$ and sends an edge $xy$ to
an arc $\sigma(xy)$ in $\mathcal{S}$ with endpoints $\sigma(x)$ and $\sigma(y)$ in such a way that interior of $\sigma(xy)$ is disjoint from $\{\sigma(v):v\in V(G)\}$ and from the interior of $\sigma(vw)$ for every $vw\in E(G)\setminus\{xy\}$.  For a subgraph $Z$ of $G$, we call $\sigma(Z):=\{\sigma(v):v\in V(X)\}\cup \bigcup_{vw\in E(X)}\sigma(vw)$ the \defin{embedded subgraph} $Z$.
A graph $G$ equipped with an embedding $\sigma$ on a surface $\mathcal{S}$ is called an \defin{$\mathcal{S}$-embedded} graph. A \defin{face} of $G$ in $\mathcal{S}$ is a component of $\mathcal{S} \setminus \sigma(G)$.

The \defin{surface-map} $\Sigma$ of an $\mathcal{S}$-embedded graph $G$ is a tuple $(V, E, F)$ where $V$ is the set of vertices, $E$ is the set of edges, and $F$ is the set of faces in the embedding $\sigma$ of $G$. We call a surface-map a \defin{surface triangulation} if every face in $F$ is a disc in $\mathcal{S}$ whose boundary is an embedded $3$-cycle of $G$.  The \defin{Euler genus} of a surface triangulation $\Sigma$ is the Euler genus of the surface $\mathcal{S}$.

As in \cite{EricksonNotes}, \defin{slicing} a surface map $\Sigma$ along a subgraph $Z \subseteq G$ with at least one edge produces a new map $\Sigma \bbslash Z$ which contains $\deg_{Z}(v)$ copies of every vertex $v$ of $Z$, two copies of every edge of $Z$, and at least one new face in addition to the faces of $\Sigma$.  (See \cref{slicing}.)
The faces of $\Sigma \bbslash Z$ that are not faces of $\Sigma$ are called \defin{holes} that are missing from the surface. A \defin{planarizing subgraph} of $\Sigma$ is any subgraph $Z \subseteq G$ such that the surface-map obtained after slicing along $Z$ has genus $0$ with one or more boundary cycles \cite{EricksonNotes}.  A key property of the slicing operation is the following:  If vertices $v'$ and $w'$ are on the boundary of the same hole in $\Sigma\bbslash Z$, then the corresponding vertices $v$ and $w$ of $\Sigma$ are in the same component of $Z$. We use of the following theorem of \citet{10.5555/644108.644208}.

\begin{figure}
    \begin{center}
        \begin{tabular}{cc}
            \includegraphics[width=.45\textwidth,page=2]{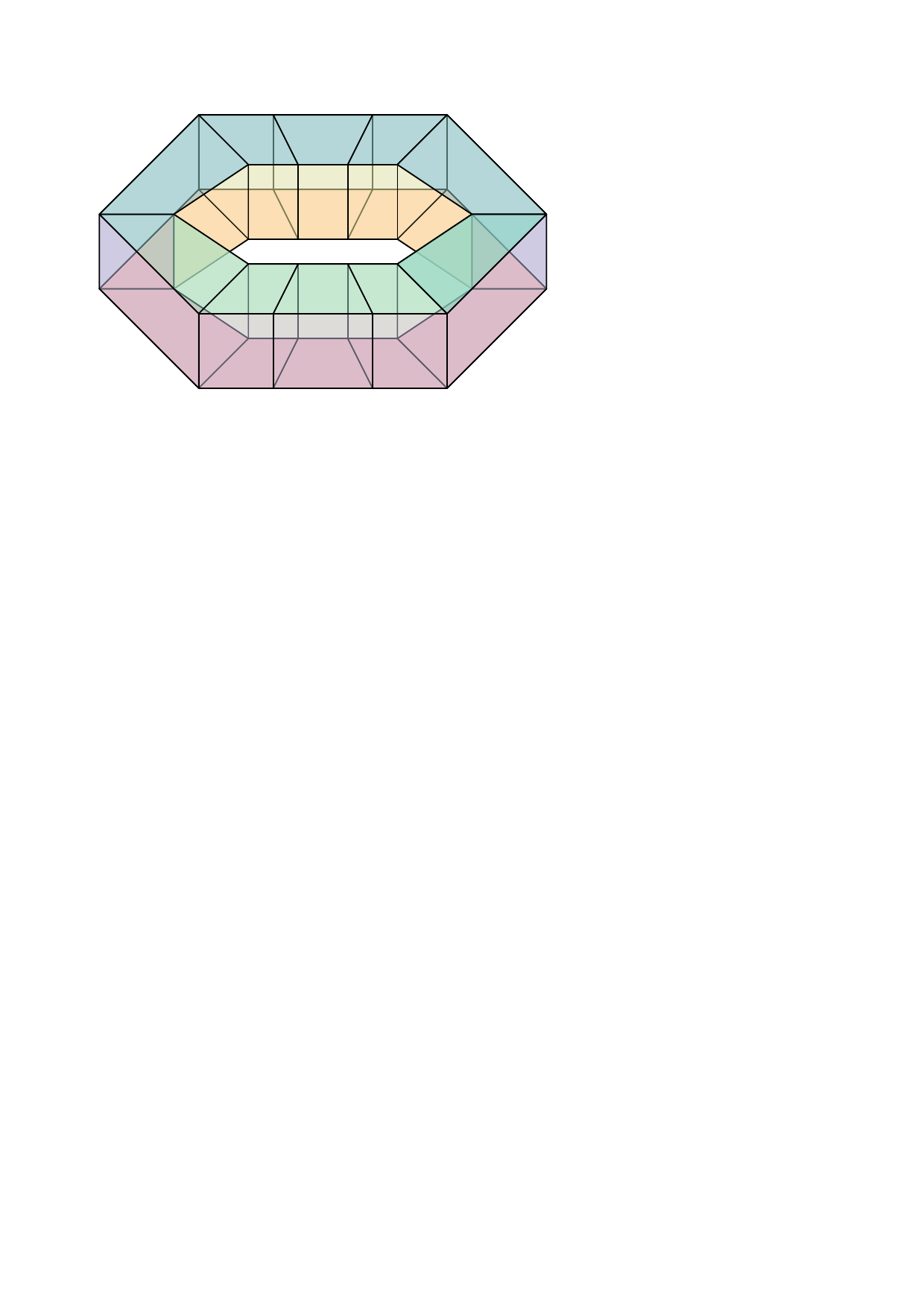} &
            \includegraphics[width=.45\textwidth,page=4]{figs/slicing-orig} \\
            $\Sigma$ & $\Sigma\bbslash Z$
        \end{tabular}
    \end{center}
    \caption{Slicing a surface map $\Sigma$ along a subgraph $Z$ (in red). Holes in $\Sigma\bbslash Z$ are shown in black and hole boundaries are red.}
    \label{slicing}
\end{figure}

\begin{thm}[\citet{10.5555/644108.644208}]\label{Eppstein}
Every surface triangulation $\Sigma$ with $n$ vertices and Euler-genus $g < n$ has a planarizing subgraph $Z$ with $O(\sqrt{gn})$ vertices and edges, which can be computed in $O(n)$ time.
\end{thm}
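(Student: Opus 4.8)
The plan is to prove \cref{Eppstein} in two stages: first show that a spanning tree together with exactly $g$ extra edges is always planarizing, and then replace that tree by a breadth-first tree and prune it with a Lipton--Tarjan-style layering argument so that the total size drops to $O(\sqrt{gn})$. I begin by recording the combinatorics. Since every face of the surface triangulation $\Sigma$ is a triangle, $3|F|=2|E|$, and Euler's formula $|V|-|E|+|F|=2-g$ yields $|E|=3n+3g-6$ and $|F|=2n+2g-4$. Next I would use an interdigitating tree--cotree decomposition: pick any spanning tree $T$ of $G$, and then pick a spanning tree $C^{*}$ of the dual surface-map using only those dual edges whose primal edge lies outside $T$ (such a $C^{*}$ exists, because the complementary edges of a spanning tree always connect all faces). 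The remaining primal edges $X:=E(G)\setminus(E(T)\cup\{e: e^{*}\in C^{*}\})$ number exactly $|E|-(n-1)-(|F|-1)=g$. A standard homology argument then shows that $Z_0:=T\cup X$ is a planarizing subgraph: slicing along $T$ alone leaves a genus-$g$ surface with a single boundary cycle whose only essential cycles are carried by the $g$ edges of $X$, and slicing along those edges as well destroys every handle. This settles existence, but $|Z_0|=n-1+g$ is far too large.

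The second stage shrinks the tree. I would take $T$ to be a breadth-first tree from an arbitrary root, producing levels $L_0,L_1,\dots,L_D$ with $\sum_i|L_i|=n$; recall the key BFS property that every non-tree edge, and in particular every handle edge of $X$, joins two vertices on the same or on consecutive levels. Fix a spacing $s:=\lceil\sqrt{n/g}\,\rceil$ and consider the $s$ families of levels obtained by taking every $s$-th level under each of the $s$ possible offsets. These families are disjoint and together cover all levels, so by averaging some family $\mathcal{L}$ has total size at most $n/s=O(\sqrt{gn})$. Slicing $\Sigma$ along the subgraph induced by $\bigcup_{L\in\mathcal{L}}L$ detaches the surface into slabs, each spanning at most $s$ consecutive BFS levels, so the portion of the BFS tree inside a slab has radius $O(s)=O(\sqrt{n/g})$. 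Because each handle edge joins nearby levels it lies inside a single slab, and within that slab its two endpoints are connected by a tree path of length $O(s)$; the resulting fundamental cycle therefore has length $O(\sqrt{n/g})$. Summing over the $g$ handle edges gives total length $O\bigl(g\cdot\sqrt{n/g}\bigr)=O(\sqrt{gn})$. Taking $Z$ to be the selected levels together with these short cycles yields a planarizing subgraph of the required size $O(\sqrt{gn})$.

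For the running time, breadth-first search, the tree--cotree decomposition, the selection of the cheapest level family, and the extraction of the $O(g)$ short fundamental cycles are each computable in $O(n)$ time on an embedded-graph structure such as the doubly-connected edge list used elsewhere in this paper, so the whole construction runs in $O(n)$ time.

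I expect the main obstacle to lie in the topological bookkeeping of the second stage. The selected-level subgraph need not be a disjoint union of simple cycles, so one must check that slicing along it behaves as intended, and—more delicately—one must verify that after the slabs have bounded the radius, the collection of short slab cycles genuinely cuts every handle: that their homology classes, taken together, are independent and span the full first homology so that no handle survives. Making the slogan ``the genus is shared out among the slabs'' precise is the crux, since it requires reconciling the homological content of the first stage with the metric control of the second, and it is here that one must argue carefully about separating versus non-separating curves and about handle edges that happen to sit on a selected level.
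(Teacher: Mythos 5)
The paper does not prove \cref{Eppstein}: it is imported verbatim from \citet{10.5555/644108.644208} and used as a black box, so there is no internal proof to compare yours against. Judged on its own terms, your outline follows the standard route to this result (tree--cotree decomposition plus BFS-level slicing, as in Eppstein and in Djidjev--Venkatesan), and your first stage is sound: the count $|X|=|E|-(n-1)-(|F|-1)=g$ is correct for a triangulation with $|E|=3n+3g-6$, and $T\cup X$ is indeed a cut graph.

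The genuine gap is the one you flag yourself, and it is not mere bookkeeping. Two concrete problems. First, you reuse the \emph{global} leftover edges $X$ and claim each one closes up into a short cycle inside its slab via ``a tree path of length $O(s)$.'' But the BFS tree restricted to a slab is a forest whose components hang from the slab's top selected level; the two endpoints of a handle edge, though on the same or adjacent levels, may lie in different components of that forest, in which case no tree path inside the slab connects them at all. (One can instead route through a selected level, turning the ``cycle'' into an arc between boundary points of the sliced piece---cutting along such arcs also reduces genus---but that is not what you wrote.) The standard fix is to abandon the global $X$ after slicing and recompute a tree--cotree decomposition separately inside each sliced piece. Second, the claim that the short cycles kill every handle needs the subadditivity of Euler genus under slicing: cutting along the selected levels yields pieces whose genera sum to at most $g$, and then a per-piece tree--cotree argument (with the tree rooted on the piece's boundary, so that fundamental cycles have length $O(s)$) produces at most $g_j$ planarizing cycles in the piece of genus $g_j$, for a total of $O(gs)=O(\sqrt{gn})$ extra edges. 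Without this per-piece homological accounting, ``the genus is shared out among the slabs'' remains an unproven slogan, exactly as you concede. So the architecture is the right one, but the proof is not complete as written.
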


We can now prove the main result of this section, which readily establishes \cref{genus_result}.

\begin{lem}\label{generic_surface_thm}
    Let $f:\mathbb{N} \longrightarrow \mathbb{N}$ be a non-decreasing function such that every $n$-vertex (planar) triangulation has a connected dominating set of size at most $f(n)$. Then every $n$-vertex Euler genus-$g$ surface triangulation $\Sigma$ has a connected dominating set of size at most $f(n + O(\sqrt{gn})) + O(\sqrt{gn})$.
\end{lem}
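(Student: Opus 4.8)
The plan is to reduce the genus-$g$ case to the planar case using Eppstein's planarizing subgraph and then transport the resulting connected dominating set back to $\Sigma$. First I would dispense with the easy range: if $g\ge n$ then $\sqrt{gn}\ge n$ and $V(G)$ itself is a connected dominating set of size $n\le f(n+O(\sqrt{gn}))+O(\sqrt{gn})$, so assume $g<n$ and apply \cref{Eppstein} to obtain a planarizing subgraph $Z$ with $|V(Z)|,|E(Z)|=O(\sqrt{gn})$. Slicing along $Z$ produces a genus-$0$ surface-map $\Sigma':=\Sigma\bbslash Z$ whose holes are bounded by closed walks consisting solely of copies of vertices and edges of $Z$. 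Since slicing creates $\deg_Z(v)$ copies of each $v\in V(Z)$, the map $\Sigma'$ has $n+\sum_{v\in V(Z)}(\deg_Z(v)-1)=n+2|E(Z)|-|V(Z)|=n+O(\sqrt{gn})$ vertices, and the number of holes is at most $2|E(Z)|=O(\sqrt{gn})$, since the total number of boundary-edge incidences is $2|E(Z)|$.

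Next I would turn $\Sigma'$ into a planar triangulation $G'$. A genus-$0$ map with boundary embeds in the sphere with its holes as faces, so I triangulate each hole by inserting one new apex vertex joined to the boundary of that hole (adding, if needed to keep the graph simple, a constant number of further triangulation edges per boundary edge). As there are $O(\sqrt{gn})$ holes, this yields a simple planar triangulation $G'$ on $n+O(\sqrt{gn})$ vertices containing $\Sigma'$ as a subgraph. Applying the hypothesised planar bound to $G'$ produces a connected dominating set $X'$ of $G'$ with $|X'|\le f(n+O(\sqrt{gn}))$.

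Now I would transport $X'$ back to $\Sigma$. Let $A$ be the set of apex vertices and let $\pi$ be the natural identification that sends each copy of a vertex $v\in V(Z)$ back to $v$ and fixes every vertex outside $Z$; set $X:=\pi(X'\setminus A)\cup V(Z)$, so that $|X|\le f(n+O(\sqrt{gn}))+O(\sqrt{gn})$. Domination is the easier half. Each $u\notin V(Z)$ has a unique copy $u'$ in $G'$ that lies on no hole boundary (boundaries consist only of $Z$-copies) and is therefore incident only to original edges, so if $u'\notin X'$ its dominating neighbour in $X'$ is non-apex and maps under $\pi$ to a neighbour of $u$ in $\Sigma$ that lies in $X$; and every vertex of $V(Z)$ is in $X$ outright. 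Hence $X$ dominates $\Sigma$.

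The main obstacle I expect is connectivity, and this is exactly where the slicing has to be ``re-glued'' by $Z$. The plan is to project a spanning tree of the connected graph $G'[X']$ through $\pi$: an original edge of $G'$ projects to a genuine edge of $\Sigma$ between the $\pi$-images of its endpoints, while a maximal run of added (hole-triangulation or apex) edges stays inside a single hole, so the non-apex endpoints it touches all lie on one hole boundary and therefore—by the key slicing property that copies on a common hole boundary come from a single component of $Z$—map into a single component of $Z\subseteq X$, which is already connected in $G[X]$. Stitching the projected original edges to these $Z$-detours shows the $\pi$-image of $X'$ is connected in $G[X]$, after which a short additional argument (each component of $Z$ is attached to this body through an original edge incident to one of its copies, using that $G'[X']$ dominates every $Z$-copy) shows $G[X]$ is connected. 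This completes the proof, and since \cref{Eppstein}, the slicing, the triangulation of holes, the planar procedure underlying $f$, and the transport are all linear-time, the construction runs in $O(n)$ time.
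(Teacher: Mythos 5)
Your proposal is correct and follows essentially the same route as the paper: planarize via Eppstein's theorem, triangulate the sliced genus-$0$ map, apply the planar bound to get $X'$, and transport it back by adding all of $V(Z)$, using the key fact that copies on a common hole boundary come from a single component of $Z$ to restore both domination and connectivity. The only cosmetic differences are that you triangulate the holes with apex vertices whereas the paper adds only edges, and you organize connectivity via a spanning tree of $G'[X']$ rather than pairwise paths; both are handled correctly.
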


\begin{proof}
Let $\Sigma:=(V,E,F)$ and let $G$ be the graph with vertex set $V$ and edge set $E$.
By applying Theorem \ref{Eppstein}, we obtain a planarizing graph
$Z$ of $\Sigma$. Treat $\Sigma \bbslash Z$ as a plane graph and add a set $E'$ of edges to obtain a planar triangulation $G'$ that has $n+O(\sqrt{gn})$ vertices. By assumption, $G'$ has a connected dominating set $X'$ whose size is at most $f(n + O(\sqrt{gn}))$.

The connected dominating set $X'$ contains vertices of $G$ and vertices of $\Sigma\bbslash Z$ that do not appear in $G$.  (These latter vertices are copies of vertices of $Z$.) To obtain a connected dominating set $X$ for $G$, we set $X:=(X'\cap V(G))\cup V(Z)$.   We now show that $X$ satisfies the requirements of the lemma.
\begin{itemize}
    \item \emph{$X$ is a dominating set:} Since $X'$ is a dominating set of $G'$, each vertex $w\in V(G)\setminus X$ is adjacent, in $G'$, to some vertex $v'\in X'$.  If $v'\in V(G)$ then $v'\in X$.  If $v'\in V(G')\setminus V(G)$, then $v'$ is a copy of some vertex $v$ of $Z$, in which case $v\in X$. In either case $w$ has neighbour, in $G$, that is contained in $X$.  Thus, $X$ is a dominating set of $G$.

    \item \emph{$G[X]$ is connected:} Let $v$ and $w$ be any two vertices in $X$. Then each of $v$ and $w$ has at at least one corresponding vertex $v'$ and $w'$, respectively, in $G'$.   Since $X'$ is a connected dominating set of $G'$, there exists a path $P':=v',z_0,\ldots,z_r,w'$ in $G'$ such that $z_0,\ldots,z_r$ is path in $G'[X']$.  If $P'$ does not contain any edge of $E'$ then $P'$ has a corresponding path in $G[X]$, so $v$ and $w$ are in the same component of $G[X]$.  If some edge $x'y'$ of $P'$ is in $E'$ then $x'$ and $y'$ are on the boundary of the same hole in $\Sigma\bbslash Z$.  Then $x'$ and $y'$ are copies of two vertices $x$ and $y$ of $G$ that are contained in the same component of $Z$.  In this case, we can replace the edge $x'y'$ with a path, in $Z$, from $x$ to $y$.  Doing this for each edge of $P'$ that is in $E'$ shows that there is a walk in $G[X]$ from $v$ to $w$, for each pair $v,w\in X$. Therefore $G[X]$ is connected.

    \item \emph{$X$ has size $f(n + O(\sqrt{gn}))+O(\sqrt{gn})$:} The size of $X$ is at most $|X'| + |V(Z)|\le f(n + O(\sqrt{gn})) + O(\sqrt{gn})$.
\end{itemize}
Therefore, $X$ is a connected dominating set of $G$ of size at most $f(n+O(\sqrt{gn}))+ O(\sqrt{gn})$.
\end{proof}

\begin{proof}[Proof of \cref{genus_result}]
    By \cref{main_result2}, we can apply \cref{generic_surface_thm} with $f(n)= 10n/21$. We obtain a connected dominating set of size at most $10 (n + O(\sqrt{gn}))/21 + O(\sqrt{gn}) = 10n/21 + O(\sqrt{gn})$.  The equivalent statement about spanning trees follows from \cref{main_theorem_cor}.  The linear-time algorithm follows from the linear-time algorithms for \cref{main_result2} and \cref{Eppstein}.
\end{proof}

\section{An Application in Graph Drawing: Proof of \cref{one_bend_collinear_thm}}
\label{one_bend}
This section demonstrates an application of connected dominating sets to graph drawing. We establish that each planar graph with a small connected dominating set has a one-bend drawing with a large collinear set. We start by introducing a topological equivalent of one-bend collinear sets as in \cite{DBLP:journals/jocg/LozzoDFMR18}.

\subsection{Characterisation of 1-Bend Collinear Sets}
A \defin{curve} $C$ is a continuous mapping from $[0, 1]$ to $\mathbb{R}^2$. We usually call $C(0)$ and $C(1)$ the \defin{endpoints} of $C$. If $C(0)=C(1)$ then  the curve is \defin{closed}. Otherwise, it is \defin{open}. A curve $C$ is called \defin{simple} if $C$ is $C(x) \neq C(y)$ for all $0\le x<y\le 1$ with the exception of $x=0$, $y=1$. $C$ is a \defin{Jordan Curve} if it is simple and closed.

Let $G$ be plane graph, a Jordan curve $C$ is a \defin{$k$-proper good curve} if it contains a point in the interior of some face of $G$ (\defin{good}), and the intersection between $C$ and each edge $e$ of $G$ is empty, or at most $k$ points, or the entire edge $e$ (\defin{$k$-proper}).

\citet{DBLP:journals/jocg/LozzoDFMR18} characterize collinear sets in the straight line drawing of a planar graph using 1-proper good curves.

\begin{thm}[\cite{DBLP:journals/jocg/LozzoDFMR18} ] \label{straightline-topological}
Let $G$ be a plane graph. A set $S \subseteq V(G)$ is a collinear set if and only if there exists a $1$-proper good curve that contains $S$.
\end{thm}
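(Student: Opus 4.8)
The plan is to prove the two directions separately. The forward implication (if $S$ is collinear then a suitable curve exists) is routine, while the reverse implication carries essentially all of the difficulty.

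For the forward direction, suppose $G$ has a planar straight-line drawing in which the vertices of $S$ lie on a common line $\ell$. I would build the required Jordan curve $C$ by first tracing $\ell$ through all the points of $S$ and then closing the trace with an arc routed entirely through the unbounded face of the drawing, far from every vertex and edge. Since the drawing is bounded such a closing arc exists and meets no edge; it also certifies that $C$ is \emph{good}, because it passes through the interior of the outer face. Along $\ell$ the curve meets each straight edge in at most one point, the only exception being an edge that lies on $\ell$ — but then both of its endpoints are in $S$ and $C$ contains the whole edge, a situation the $1$-proper condition explicitly allows. Hence $C$ is a $1$-proper good Jordan curve containing $S$.

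For the converse I would first normalize $C$. After a small perturbation I may assume that $C$ meets each edge either not at all, transversally in finitely many interior points, or along the entire edge (the last case forcing both endpoints into $S$), and that $C$ avoids every vertex outside $S$. Subdividing each transversally crossed edge with a new dummy vertex at the crossing turns $G$ into a plane graph $G'$ through which $C$ now passes only at vertices — the vertices of $S$ and the dummies — while each maximal arc of $C$ between two consecutive such vertices lies inside a single face of $G'$. Reading off the cyclic order in which $C$ visits these vertices and inserting, inside each relevant face, an edge that follows the corresponding arc, I obtain a plane graph $G''$ in which $C$ is realized as an honest cycle $\mathcal{C}$. Because $C$ is a Jordan curve, $\mathcal{C}$ separates the sphere $\mathbb{S}^2$ into two discs, and the goodness of $C$ guarantees that one arc of $\mathcal{C}$ may be pushed out to infinity.

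The crux is then to produce a planar straight-line drawing of $G$ in which $\mathcal{C}$ is drawn as a line. Working on $\mathbb{S}^2$, the target is to realize $\mathcal{C}$ as the equator (the $x$-axis together with the point at infinity), sending the interior disc of $\mathcal{C}$ into the closed upper half-plane and the exterior disc into the closed lower half-plane, with the vertices of $S$ and the dummies appearing on the $x$-axis in the prescribed cyclic order. The main obstacle is that this must be a genuine \emph{straight-line} drawing \emph{of $G$}, not merely of $G''$: it does not suffice to draw $\mathcal{C}$ as a convex polygon, since each dummy vertex $d$ subdividing an edge $ab$ has to be placed so that $a$, $d$, $b$ are collinear, so that suppressing $d$ recovers a straight segment $ab$ crossing $\ell$ at $d$. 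Enforcing this simultaneously for every subdivided edge is the hard realization step. To discharge it I would augment each side of $\mathcal{C}$ to an internally triangulated disc and invoke a straight-line drawing construction (for example a canonical-ordering or shift-method drawing, or a Tutte-type convex drawing of each side with its boundary pinned along $\ell$) that keeps the boundary chain on the $x$-axis with everything else strictly on one side; drawing the two sides independently and matching them along $\ell$, then suppressing the dummies and deleting the auxiliary $\mathcal{C}$-edges, yields $G$ drawn with $S$ collinear. I expect the delicate point to be the bookkeeping that makes the two independently drawn halves compatible along $\ell$ — identical $x$-coordinates for the shared on-axis vertices and collinear reconstitution of each subdivided edge — which is precisely where the topological hypotheses on $C$ must be converted into geometric constraints.
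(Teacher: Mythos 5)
This statement is not proved in the paper at all: it is quoted verbatim from Da~Lozzo et al.\ \cite{DBLP:journals/jocg/LozzoDFMR18} and used as a black box (the paper only proves the easier one-bend analogue, \cref{1-bend-topological}, by subdividing crossed edges and invoking this theorem). So there is no in-paper argument to compare yours against, and your attempt has to be judged on its own. Your forward direction is fine: in a straight-line drawing with $S$ on a line $\ell$, every edge meets $\ell$ in at most one point or lies entirely on it, and closing $\ell$ through the outer face gives a $1$-proper good curve. Your reduction of the converse to a realization problem (normalize $C$, subdivide at crossings, realize $C$ as a cycle $\mathcal{C}$, draw $\mathcal{C}$ on the $x$-axis with one side in each half-plane) is also the right skeleton and matches the strategy of the cited paper.

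The gap is in the step you yourself flag as ``the hard realization step,'' which is where essentially all of the content of the theorem lives, and the tools you propose do not discharge it. A Tutte convex drawing or a canonical-ordering/shift drawing requires the outer face to be realized as a convex polygon; a set of collinear points is degenerate, so neither technique lets you pin an entire boundary cycle of a triangulated disc to the $x$-axis, and it is not even true in general that an arbitrary triangulated disc admits a straight-line drawing with all boundary vertices on a line and the interior strictly above it. Worse, even granting such half-plane drawings, the constraint that each dummy vertex $d$ on a crossed edge $ab$ be collinear with $a$ (above the axis) and $b$ (below the axis) couples the two independently drawn halves: it prescribes, for each crossing point, a matching pair of directions at which the two half-edges must leave the axis. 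Handling this coupling --- choosing the positions of the crossing points on the line and the geometry of each half so that every crossed edge reconstitutes as a single segment --- is precisely the inductive construction that occupies the proof in \cite{DBLP:journals/jocg/LozzoDFMR18}, and ``matching them along $\ell$'' does not supply it. As written, your argument proves at best that $S$ extends to a \emph{one-bend} drawing with $S$ on a line (each crossed edge bending at the axis), which is the weaker \cref{1-bend-topological}, not the straight-line statement.
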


The following lemma, illustrated in \cref{subdivisions}, gives a similar condition for one-bend collinear sets.

\begin{figure} [htbp]
  \centering
  \includegraphics[page=6]{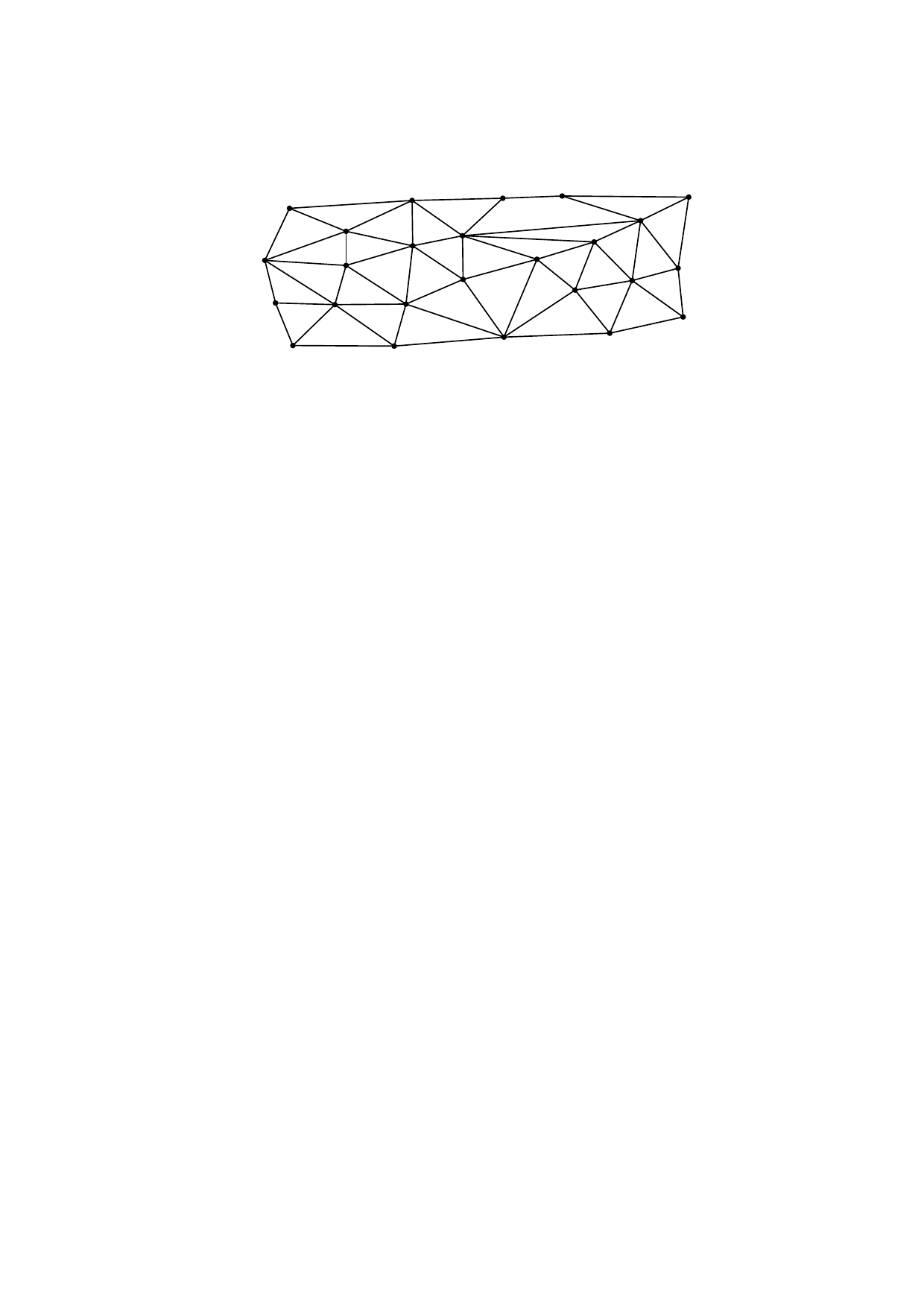} \\
  $\Downarrow$ \\
  \includegraphics[page=7]{figs/proper_good}
  \caption{Subdividing $G$ so that a $2$-proper good curve $C$ becomes a $1$-proper good curve for the subdivided graph $G^+$.}
  \label{subdivisions}
\end{figure}

\begin{obs} \label{1-bend-topological}
    Let $G$ be a plane graph. A set $S \subseteq V(G)$ is a one-bend collinear set if $G$ has a $2$-proper good curve $C$ that contains $S$.
\end{obs}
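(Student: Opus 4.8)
The plan is to convert the problem about one-bend drawings into a straight-line problem on a subdivided graph, so that Theorem~\ref{straightline-topological} can be applied directly. The key observation is that a one-bend drawing of $G$ is essentially a straight-line drawing of the graph $G^+$ obtained from $G$ by subdividing each edge once (placing a degree-$2$ vertex at the location of the potential bend). Under this correspondence, a $2$-proper good curve for $G$ should become a $1$-proper good curve for $G^+$, because each edge of $G$ is split into two edges of $G^+$, and a curve crossing an original edge in up to $2$ points crosses each of the two half-edges in at most $1$ point.

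First I would set up the subdivision $G^+$ precisely: for each edge $e=uv$ of $G$, introduce a new vertex $m_e$ and replace $e$ with the two edges $um_e$ and $m_e v$. Taking the given plane embedding of $G$, this yields a plane embedding of $G^+$ with the same face structure (up to the subdivision points lying on the original arcs). Next, given the hypothesized $2$-proper good curve $C$ containing $S$, I would argue that $C$ is a $1$-proper good curve for $G^+$. The ``good'' property is inherited since the faces of $G^+$ are essentially the faces of $G$, so $C$ still meets the interior of some face. For the ``$1$-proper'' property, I would examine how $C$ meets each subdivided edge $um_e$ or $m_e v$: because $C$ met the original edge $e$ in at most $2$ points (or all of $e$, or none), and the subdivision vertex $m_e$ can be placed at one of the crossing points, each half-edge is crossed at most once. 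The mild technical care here is to place each $m_e$ at a crossing point of $C$ with $e$ (when there are two such crossings), so that the two crossings are distributed one per half-edge; if $C$ contains all of $e$ or misses $e$, the condition is trivially satisfied.

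Applying Theorem~\ref{straightline-topological} to $G^+$ and the curve $C$ (which contains $S$, and $S\subseteq V(G)\subseteq V(G^+)$) yields that $S$ is a collinear set of $G^+$: for every $|S|$-point set $P$, there is a straight-line non-crossing drawing of $G^+$ placing $S$ on $P$. Finally I would translate this back: a straight-line drawing of $G^+$ is precisely a one-bend drawing of $G$, since each path $u\,m_e\,v$ is a polygonal path of two segments realizing the edge $uv$ of $G$, with the bend at the image of $m_e$. Hence $S$ is a one-bend collinear set of $G$, as required.

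The step I expect to require the most care is verifying the $1$-proper condition for $G^+$, specifically ensuring the two crossings of $C$ with an original edge $e$ really do separate onto distinct half-edges. This depends on being able to place the subdivision vertex $m_e$ appropriately; the cleanest formulation is to choose $m_e$ to lie on $e$ between its two intersection points with $C$ (if there are exactly two), which guarantees one crossing on each side. Since we are free to choose the embedding location of the subdivision vertices and only the combinatorial structure of $G^+$ matters for Theorem~\ref{straightline-topological}, this is routine but is the one place where the ``$2 \to 1$'' proper-crossing bookkeeping must be made explicit.
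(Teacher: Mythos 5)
Your proposal is correct and follows essentially the same route as the paper: subdivide the twice-crossed edges with the new vertex placed between the two intersection points, observe that the curve becomes $1$-proper good for the subdivided graph $G^+$, apply Theorem~\ref{straightline-topological}, and note that a straight-line drawing of $G^+$ is a one-bend drawing of $G$. The only cosmetic difference is that you subdivide every edge rather than only those met twice by $C$ (and you should stick with your ``place $m_e$ between the two crossings'' formulation rather than ``at a crossing point,'' so the curve does not pass through a vertex); neither point changes the argument.
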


\begin{proof}
Let $C$ be a 2-proper good curve that contains $S$. For each edge $e \in E(G)$ such that $|C \cap e| = 2$, we introduce a new subdivision vertex $u_e$ between the two intersection points of $C$ and $e$. By adding these new vertices, we obtain a plane drawing of a subdivision of $G$, denoted as $G^+$. Since every edge of $G^+$ is intersected by $C$ at most once, $C$ is a $1$-proper good curve for $G^+$. Thus, by \cref{straightline-topological}, $S$ is a collinear set for $G^+$. Note that a straight line drawing of $G^+$ is a one-bend drawing for $G$. Therefore, $S$ is a one-bend collinear set for $G$.
\end{proof}

\subsection{From a Spanning Tree to a One-bend Collinear Set}

We prove that the leaves of a spanning tree of a planar graph induce a one-bend collinear set. Precisely, we prove the following theorem.

\begin{lem} \label{spanning_tree_to_collinear_set}
  Let $G$ be a planar graph and $T$ be a spanning tree of $G$. Then, the leaves of $T$ form a one-bend collinear set for $G$.
\end{lem}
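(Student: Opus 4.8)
By \cref{1-bend-topological}, it suffices to fix a plane embedding of $G$ and produce a single $2$-proper good Jordan curve $C$ that passes through every leaf of $T$. The plan is to let $C$ be the boundary of a thin tubular neighbourhood $N$ of the plane tree $T$, modified so that at each leaf the curve is pinched to pass exactly through the leaf vertex. Since $T$ is connected and acyclic, a sufficiently thin neighbourhood $N$ is a topological disc, so its boundary is a single simple closed curve; pinching at the leaf tips (where the two sides of the unique incident tree edge already come together around the tip) keeps the curve simple, so $C$ is a Jordan curve. Walking along $C$ visits each leaf exactly once, so $C$ contains all leaves of $T$, and since $C$ runs through the complementary region of $T$ and meets the edges of $G$ in only finitely many points, it has points in the interior of a face of $G$ and is therefore good.

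The substance is in verifying that $C$ is $2$-proper, and the key is to choose $N$ thin enough that, for every edge $e=xy$ of $G$ that is not in $T$, the set $N\cap e$ is contained in two tiny discs, one around $x$ and one around $y$; this is possible because, in a plane graph, $e$ meets $T$ only at the shared vertices $x$ and $y$. With this choice I would argue edge by edge. A tree edge $uv$ lies in the interior of $N$ and is disjoint from $C$ except possibly at a pinched leaf endpoint, so $C\cap uv$ consists of at most the leaf endpoints of $uv$ (at most two points, and two only in the degenerate case $T\cong K_2$). A non-tree edge $e=xy$ enters $N$ only near $x$ and near $y$; near each endpoint it contributes exactly one point to $C\cap e$ --- either a transversal crossing of $C$ (when the endpoint is an internal vertex, which $C$ skirts) or the endpoint vertex itself (when it is a leaf, which $C$ passes through) --- so $|C\cap e|\le 2$. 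In every case $C$ meets $e$ in at most two points, hence $C$ is $2$-proper, and \cref{1-bend-topological} gives the conclusion.

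The main obstacle is the careful book-keeping in the $2$-proper verification: one must confirm that a non-tree edge cannot dip in and out of the thin neighbourhood more than once per endpoint, and that pinching $C$ through the leaves neither destroys simplicity nor introduces extra intersections with the edges incident to a leaf. Both issues are controlled by the single quantitative choice of making $N$ thin enough, which localises every interaction between $C$ and $G$ to small discs around the vertices of $T$; once that choice is made explicit, the remaining case analysis is routine.
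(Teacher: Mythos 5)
Your proposal is correct and is essentially the paper's own argument: the paper also reduces to exhibiting a $2$-proper good curve via \cref{1-bend-topological} and constructs it as the boundary of a thin envelope around the drawn tree $T$ (realized concretely via small circles at vertices and parallel segments along tree edges, traversed in DFS order), pinched to pass through each leaf, with the same $0/1/2$ intersection case analysis for tree edges between internal vertices, tree edges at leaves, and non-tree edges. Your tubular-neighbourhood phrasing is just a more topological description of the same curve, so no substantive difference.
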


\begin{proof}
    Let $\Gamma$ be a straight-line drawing of $G$.
    By \cref{1-bend-topological}, it is enough to introduce a 2-proper good curve $\ell$ on $\Gamma$ containing all the leaves of $T$. To navigate the curve $\ell$ on the drawing $\Gamma$, we construct an envelope around $\Gamma$ as follows. For each vertex $v \in V(G)$, we draw a small circle, $C_v$, centered at $v$. We make the radii of the circles small enough such that each vertex $v \in V(G)$, $C_v$ intersects only the edges incident to $v$ and it is disjoint from all the other circles that correspond to the other vertices. Moreover, for each edge $uv \in E(G)$, we draw two parallel segments on both sides of $uv$ with endpoints on the boundary of corresponding circles of $u$ and $v$. These parallel segments are close enough to the corresponding edges such that no two of them intersect. (see \cref{tree_walking}). Note that each edge $uv \in E(G)$ crosses the envelope exactly twice, once at $C_u$ and once at $C_v$.

    \begin{figure}
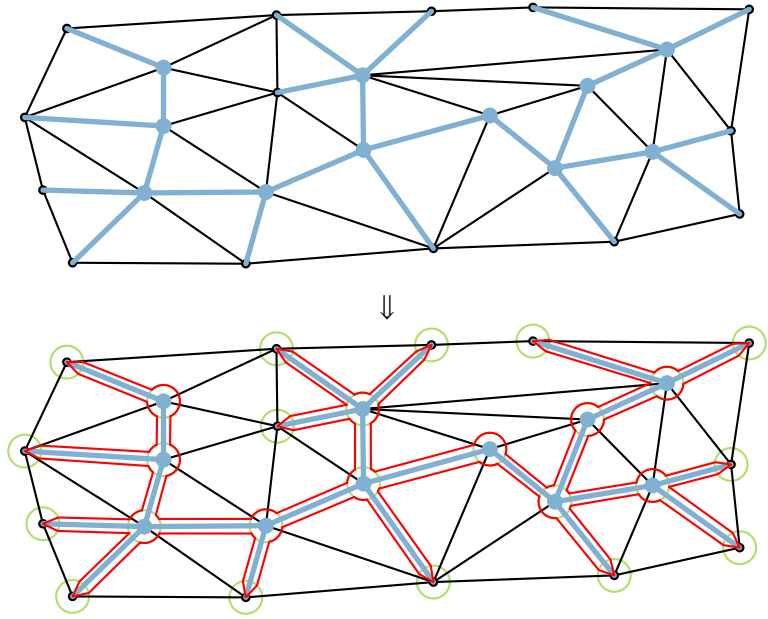

        \centering
        \includegraphics[page=3]{figs/proper_good}\\
        $\Downarrow$ \\
        \includegraphics[page=4]{figs/proper_good}
        \caption{Constructing a $2$-proper good curve for $G$ that contains all the leaves of the tree $T$.}
        \label{tree_walking}
    \end{figure}

    Assume $T$ is rooted at an arbitrary vertex of degree at least 2. We build the curve $\ell$ on the envelope of $\Gamma$ as follows. Starting from the root, we traverse the tree in \textit{depth first search} order. For each edge $uv \in E(T)$, we add the segment on the right side of the traversal direction of $uv$ into the curve $\ell$.

    For each leaf $u$ of $T$, let $v_u$ be its neighbor in $T$. To include all the leaves of $T$ on the curve $\ell$, we join $u$ to the endpoint of segments around the edge $uv_u \in E(T)$ on $C_u$. To keep the curve $\ell$ closed, for each non-leaf vertex $u \in V(T)$, we append to $\ell$ the circular arcs from $C_u$ between the segments in $\ell$ in the order of the traversal. By the properties of the depth first traversal, $\ell$ is a closed curve. By construction, $\ell$ contains all the leaves of $T$ and all the other vertices of $T$ are inside $\ell$. Moreover, for each edge $uv \in E(G)$:

    \begin{enumerate}
        \item [(P1)] If $uv \in E(T)$ and neither $u$ nor $v$ is a leaf, then $|uv \cap \ell| = 0$,

        \item [(P2)] If $uv \in E(T)$ and either $u$ or $v$ is a leaf of $T$, then $|uv \cap \ell| = 1$, and

        \item [(P3)] If $uv \notin E(T)$, then $|uv \cap \ell| = 2$.
    \end{enumerate}

    Properties P1-P3 guarantee that $\ell$ is a 2-proper curve. Since the tree $T$ is not empty, $\ell$ intersects the circle of some vertex in $T$, so $\ell$ touches a face of $\Gamma$. Therefore, $\ell$ is $2$-proper good curve and by \cref{1-bend-topological}, there exists a one-bend collinear set for $G$ formed by the leaves of $T$.
\end{proof}

\begin{proof}[Proof of \cref{one_bend_collinear_thm}]
Let $G$ be an $n$-vertex planar graph. \cref{main_result2} implies that $G$ has a spanning tree with at least $11n/21$ leaves that can be computed in $O(n)$ time. Using this tree in \cref{spanning_tree_to_collinear_set} establishes \cref{one_bend_collinear_thm}.
\end{proof}

\section{Discussion}
\label{discussion}

In the introduction we argued that, if $X$ is a connected dominating set in a triangulation $G$, then the induced graph $G[V(G)\setminus X]$ is an outerplane graph.  Although it is not immediately obvious, finding the largest induced outerplane graph in a triangulation $G$ is equivalent to the problem of finding the smallest connected dominating set.

\begin{thm}
  Let $G$ be a triangulation with $n\ge 4$ vertices, let $X$ be a minimum-sized connected dominating set of $G$, and let $Y$ be a maximum-sized subset of $V(G)$ such that all vertices of $G[Y]$ lie on a common face of $G[Y]$. Then $|X|+|Y|=n$.
\end{thm}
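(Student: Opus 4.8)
The plan is to prove the identity through the two inequalities $|X|+|Y|\ge n$ and $|X|+|Y|\le n$. The first follows almost immediately from the relationship, recorded in the introduction, that the complement of a connected dominating set of a triangulation induces an outerplane graph. The second inequality is the real substance, and I want to flag at the outset that it does \emph{not} follow from the naive converse ``$V(G)\setminus Y$ is a connected dominating set.'' That converse is false: in the octahedron the four vertices of a $4$-cycle induce an outerplane graph (all vertices lie on a common face---call such a set \emph{co-facial}) whose complement is a pair of non-adjacent vertices, and this $4$-cycle is in fact a \emph{maximum} co-facial set. So the upper bound has to be obtained by an argument that genuinely uses the maximality of $Y$ and the selection of a good maximum co-facial set.

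For the lower bound, let $X$ be a minimum connected dominating set. Since $G$ is a triangulation, $G[V(G)\setminus X]$ is outerplane, so all of its vertices lie on a common face; hence $V(G)\setminus X$ is one of the sets competing in the definition of $Y$. Maximality of $Y$ then gives $|Y|\ge|V(G)\setminus X|=n-|X|$, that is, $|X|+|Y|\ge n$.

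For the upper bound I would exhibit \emph{some} maximum co-facial set $Y$ whose complement is a connected dominating set, which immediately yields $|X|\le|V(G)\setminus Y|=n-|Y|$. Domination is the easy half: if some $y\in Y$ had all its neighbours in $Y$, then in the triangulation the neighbours of $y$ form a cycle (its link) enclosing $y$, so $y$ could not lie on a common face with the $Y$-vertices outside that cycle, contradicting co-faciality; hence every $y\in Y$ has a neighbour in $V(G)\setminus Y$, so $V(G)\setminus Y$ dominates. For connectivity I would choose, among all maximum co-facial sets, one minimising the number of components of $G[V(G)\setminus Y]$, and aim to show this number is $1$. A helpful sublemma is that within a single face of $G[Y]$ the complement vertices induce a connected subgraph: such a face is a chord-free triangulated disc, so every interior triangle of $G$ has a complement vertex, adjacent triangles always share such a vertex (the shared edge is never a $Y$--$Y$ chord), and connectivity of the disc's dual then forces all its interior vertices into one component. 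Thus each face of $G[Y]$ contributes at most one component, and having more than one component forces at least two faces of $G[Y]$ to be occupied.

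The main obstacle is closing this last gap, and it is exactly where the triangulation structure does the work. Given an occupied inner face bounded by a cycle $C\subseteq Y$, each vertex of $C$ has a neighbour among the complement vertices trapped inside $C$ (cf.\ \cref{useful_little_guy}); I would move a suitable vertex of $C$ out of $Y$---thereby merging the trapped component with the rest---while bringing a complement vertex lying on the common face into $Y$, and argue that this exchange preserves both co-faciality and $|Y|$ while strictly decreasing the number of components, contradicting the minimal choice. Making the exchange precise, and in particular verifying that the modified set is still co-facial, is the delicate step; it is also precisely the point the octahedron illustrates, namely that one cannot take $V(G)\setminus Y$ for an arbitrary maximum $Y$. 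Once connectivity is secured, $V(G)\setminus Y$ is a connected dominating set, giving $|X|\le n-|Y|$, and combining the two inequalities yields $|X|+|Y|=n$.
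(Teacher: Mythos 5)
Your lower bound, your domination argument, and your overall architecture for the upper bound (pick a maximum co-facial set by an extremal criterion, then run an exchange argument against that criterion) all match the paper, and the octahedron example is a correct and instructive demonstration that the naive converse fails. The problem is that the one step you yourself flag as ``delicate'' is the entire content of the upper bound, and the specific exchange you sketch does not obviously work. Three concrete issues with the one-for-one swap: (a) there may be no complement vertex lying on the common face at all (every complement vertex could be trapped in inner faces of $G[Y]$), in which case there is nothing to swap in and $|Y|$ drops; (b) even when such a vertex $u$ exists, adding it to $Y$ can destroy co-faciality, since $u$ may be adjacent to non-consecutive vertices on the boundary of the common face and thereby enclose other vertices of $Y$ in a new inner face of $G[Y\setminus\{v\}\cup\{u\}]$; (c) it is not established that moving $v$ into the complement actually merges the trapped component with any other component of the complement ($v$ is adjacent to the trapped vertices by \cref{useful_little_guy}, but need not be adjacent to the complement vertices outside $C$). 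None of these is obviously fatal, but none is resolved, and your chosen potential function (number of components of the complement) gives you no slack to absorb them.

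The paper closes exactly this gap with a different secondary criterion and a different exchange, and the comparison is instructive. Among maximum co-facial sets it chooses $Y$ to maximize the number of vertices of $G$ lying in the common face $F_Y$ (rather than to minimize components of the complement). If some inner face $F$ of $G[Y]$ contains a complement vertex, it takes a boundary vertex $v$ of $F$ with $Z:=N_G(v)\cap \operatorname{int}(F)\neq\emptyset$ and replaces $Y$ by $(Y\setminus\{v\})\cup Z$: this is a remove-one-add-at-least-one move, so maximality of $|Y|$ is preserved for free, co-faciality is preserved because deleting $v$ merges $F$ into $F_Y$ and the vertices of $Z$ sit in the merged face, and the new common face strictly gains vertices of $G$ --- contradicting the secondary maximality without ever having to count components or find a vertex to swap in from the common face. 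The conclusion that the complement is connected then follows from your (correct) single-face connectivity sublemma once all complement vertices are known to lie in $F_Y$. So the route is recoverable, but as written your proof is missing its central step.
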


\begin{proof}
  Let $X$ be a minimum-size connected dominating set of $G$ and let $Y'=V(G)\setminus X$.  Then $G[Y']$ is outerplane, since every vertex in $Y':=V(G)\setminus X$ is on the boundary of the face of $G[Y]$ that contains all vertices of $X$ in its interior.  Since $Y$ has maximum size $|X|+|Y|\ge |X|+|Y'|= n$.

  Now consider a set $Y$ of maximum size such that all vertices of $G[Y]$ lie on a common face $F_Y$ of $G[Y]$ and, among all such maximum-size sets, choose $Y$ to maximize the number of vertices of $G$ that are contained in the interior of $F_Y$.  Without loss of generality, suppose $F_Y$ is the outer face of $G[Y]$, so $G[Y]$ is outerplane.  Let $X':=V(G)\setminus Y$. Since $n\ge 4$, $Y$ does not contain all three vertices on the outer face of $G$, so $X'$ dominates the vertices on the outer face of $G$. For any vertex $w\in Y$ not on the outer face of $G$, some neighbour of $v\in N_G(w)$ is in $X'$ since, otherwise $G[N_G(v)]\subseteq G[Y]$ contains a cycle with $w$ in its interior, contradicting the fact that $G[Y]$ is outerplane. Therefore $X'$ is a dominating set of $G$.

  We now show that all vertices of $X'$ are in the outer face of $G[Y]$, which implies that $G[X']$ is connected.  Suppose, by way of contradiction, that some inner face $F$ of $G[Y]$ contains at least one vertex of $X'$ in its interior.  Since $G$ is connected, there is at least one vertex $v\in V(F)$ such that $N_G(v)$ contains at least one vertex in the interior of $F$.  Let $Z:=\{w\in N_G(v):\text{$w$ is in the interior of $F$}\}$. Let $Y':=Y\cup Z \setminus\{v\}$.  Then $|Y'|\ge |Y|$, $G[Y']$ is outerplane, and the outer face of $G[Y']$ contains more vertices of $G$ than  $F_Y$. This contradicts the choice of $Y$.

  Therefore $X'$ is a connected dominating set of $G$.  Since $X$ is of minimum size, $|X|+|Y|\le |X'|+|Y|=n$.  Therefore $n\le |X|+|Y|\le n$, so $|X|+|Y|=n$, as required.
\end{proof}

We conclude with two open questions:

\begin{compactenum}
  \item Is it true that every $n$-vertex triangulation has a connected dominating set of size at most $n/3+O(1)$?  A positive answer to this question seems to require additional new ideas.  In particular, it would seem to require a more global approach than the greedy approaches presented here. (This question is also posed by \citet[Question~4.2]{bradshaw.masarik.ea:robust}.)

  \item What is the maximum value $\alpha$ such that every $n$-vertex planar graph contains a one-bend collinear set of size $\alpha n-O(1)$?  \cref{one_bend_collinear_thm} shows $\alpha \ge 11/21$ and disjoint copies of the Goldner-Harary graph show that $\alpha \le 10/11$.

\end{compactenum}

\bibliographystyle{plainurlnat}
\bibliography{main}

\end{document}